\theoremstyle{plain}
\newtheorem*{theoremA}{Theorem A}
\newtheorem*{theoremB}{Theorem B}
\newtheorem*{theoremC}{Theorem C}
\newtheorem*{theoremD}{Theorem D}
\newtheorem{theorem}{Theorem}[section]
\newtheorem{lemma}[theorem]{Lemma}
\newtheorem{proposition}[theorem]{Proposition}
\newtheorem{corollary}[theorem]{Corollary}
\newtheorem{fact}[theorem]{Fact}
\newcounter{proofcount}
\newtheorem{claim}{Claim}
\newtheorem*{claim*}{Claim}
\newenvironment{claimproof}[1][Proof of Claim \theclaim.] 
{%
	\proof[#1]%
	
}
{%
	\endproof%
}
\newenvironment{claimproof*}[1][Proof of the Claim.] 
{%
	\proof[#1]%
	
}
{%
	\endproof%
}
\theoremstyle{definition}
\newtheorem{definition}[theorem]{Definition}
\newtheorem{remark}[theorem]{Remark}
\newtheorem{example}[theorem]{Example}
\newtheorem{question}{Question}
\newcommand{\dcl}{\textnormal{dcl}}
\newcommand{\acl}{\textnormal{acl}}
\newcommand{\acleq}{\textnormal{acl}^{\textnormal{eq}}}
\newcommand{\tp}{\textnormal{tp}}
\newcommand{\stp}{\textnormal{stp}}
\newcommand{\U}{\textnormal{U}}
\newcommand{\p}{\mathbb{P}}
\newcommand{\Cb}{\textnormal{Cb}}
\newcommand{\Aut}{\textnormal{Aut}}
\newcommand{\RM}{\textnormal{RM}}
\newtheorem*{akn}{Acknowledgments}
\def\Ind#1#2{#1\setbox0=\hbox{$#1x$}\kern\wd0\hbox to 0pt{\hss$#1\mid$\hss}
	\lower.9\ht0\hbox to 0pt{\hss$#1\smile$\hss}\kern\wd0}
\def\ind{\mathop{\mathpalette\Ind{}}}
\def\notind#1#2{#1\setbox0=\hbox{$#1x$}\kern\wd0
	\hbox to 0pt{\mathchardef\nn=12854\hss$#1\nn$\kern1.4\wd0\hss}
	\hbox to 0pt{\hss$#1\mid$\hss}\lower.9\ht0 \hbox to 
	0pt{\hss$#1\smile$\hss}\kern\wd0}
\def\nind{\mathop{\mathpalette\notind{}}}
\title{Additive covers and the Canonical Base Property}
\author{Michael Loesch}
\date{\today}
\address{ \, Abteilung f\"ur Mathematische Logik, Mathematisches Institut,
  Albert-Ludwigs-Universit\"at Freiburg, Ernst-Zermelo-Stra\ss e 1, D-79104
  Freiburg, Germany}
\thanks{Research supported by the Deutsche Forschungsgemeinschaft (DFG, 
German Research Foundation) - Project number 2100310201 and 2100310301, 
part of the ANR-DFG program GeoMod as well as by the German Academic 
Exchange Service (DAAD) - Kurzstipendien für Doktoranden, 2019/20 
(57438025)}
\email{loesch@math.uni-freiburg.de}
\begin{document}

\begin{abstract}
	We give a new approach to the failure of the Canonical Base Property 
	(CBP) in the so far only known counterexample, produced by Hrushovski, 
	Palacin and Pillay. For this purpose, we will give an alternative 
	presentation of the counterexample as an additive covers of an 
	algebraically closed field. We isolate two fundamental weakenings of 
	the CBP, which already appeared in work of Chatzidakis,  and show that 
	they do not hold in the counterexample. In order to do so, a study of 
	imaginaries in additive covers is developed, for elimination of finite 
	imaginaries yields a connection to the CBP. As a by-product of the 
	presentation, we notice that no pure Galois-theoretic account of the 
	CBP can be provided. 
\end{abstract}

\maketitle

\section{Introduction}
Internality is a fundamental notion in 
geometric model theory in order to 
understand a complete stable theory of finite Lascar rank in terms of its 
building blocks, its minimal types of rank one. 
A type $p$ is internal, resp. almost internal to 
the family $\p$ of all non-locally modular minimal types, if there exists 
a set of parameters $C$ such that every realization $a$ of $p$ is 
definable, resp. algebraic over $C,e$
where $e$ is a tuple of realizations of types (each one based over $C$) in $\p$.

Motivated by results of Campana \cite{fC80} on algebraic coreductions, 
Pillay and 
Ziegler \cite{PZ03} showed that in the finite rank part of the theory of 
differentially 
closed fields in characteristic zero, the type of the canonical base of a 
stationary type over a 
realization is almost 
internal to the constants. With this result, Pillay and Ziegler reproved 
the function field case of the Mordell-Lang conjecture in characteristic 
zero following Hrushovski's original proof but with considerable 
simplifications.

The above phenomena is captured in the notion of the Canonical Base 
Property 
(CBP), which was introduced and studied by Moosa and Pillay \cite{MP08}:
Over a realization of a 
stationary type, its canonical base is almost $\p$-internal.
Chatzidakis \cite{zC12} showed that the CBP already implies a seemingly 
stronger 
statement, the so-called uniform canonical base property (UCBP):
Whenever the type of a realization of the stationary type $p$ over 
some set $C$ of parameters is almost $\p$-internal, then so is
$\stp(\Cb(p)/C)$. 
For the proof, she isolated two remarkable properties which hold in 
every theory of finite rank with the CBP: Almost internality to $\p$ is preserved 
on intersections and more generally on quotients.
Motivated by her work, we introduce the following two notions. A 
stationary type is good, resp. special, if the condition for the CBP, 
resp. UCBP, holds 
for this type. (See Definitions \ref{D:good} and \ref{D:AB}
for a precise formulation.)
The following result relates these two notions to the aforementioned 
properties.

\begin{theoremA}\textup{(Propositions \ref{P:propA} 
	and \ref{P:propB})}
	The theory $T$ preserves internality on intersections, resp. on 
	quotients, if and only if 
	every stationary almost $\p$-internal type in  $T^\textnormal{eq}$ is 
	good, resp. special.
\end{theoremA}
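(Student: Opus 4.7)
The plan is to establish the two equivalences by parallel arguments; I describe POI versus good in detail and then indicate the modifications for POQ versus special.

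For the implication \emph{every stationary almost $\p$-internal type in $T^{\mathrm{eq}}$ is good $\Rightarrow$ POI}, suppose $\tp(a/B)$ and $\tp(a/C)$ are almost $\p$-internal. Since adjoining parameters preserves almost internality, $\tp(a/BC)$ is also almost $\p$-internal; let $p$ denote its stationary extension to $\acleq(BC)$, with canonical base $b = \Cb(p)$. By goodness of $p$, $\stp(b/a)$ is almost $\p$-internal, and hence so is $\stp(a,b)$. A canonical-base manipulation --- comparing $b$ with $\Cb(\tp(a/B))$ and $\Cb(\tp(a/C))$, each of which lies in the corresponding algebraic closure --- identifies the non-forking data inside $\acleq(B) \cap \acleq(C)$ and yields $\tp(a/\acleq(B)\cap\acleq(C))$ almost $\p$-internal.

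For the converse \emph{POI $\Rightarrow$ every stationary almost $\p$-internal type is good}, fix a stationary almost $\p$-internal type $p$ with canonical base $b$ and realisation $a \models p$. Take a Morley sequence $(a_i)_{i \le n}$ of $p$ over $b$ with $n$ large enough that $b \in \acleq(a_1,\dots,a_n)$, so that $\tp(b/a_1,\dots,a_n)$ is algebraic, a fortiori almost $\p$-internal. The strategy is to descend to $\tp(b/a_1)$ by introducing a conjugate Morley sequence $a_2',\dots,a_n'$ over $a_1$, chosen independent from $a_2,\dots,a_n$ over $a_1$; if $b'$ is the conjugate canonical base, then $\tp(b'/a_1,a_2',\dots,a_n')$ is also algebraic. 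Applying POI together with the standard fact $\acleq(a_1,a_2,\dots,a_n) \cap \acleq(a_1,a_2',\dots,a_n') = \acleq(a_1)$ gives $\tp(b/a_1)$ almost $\p$-internal, so $\stp(b/a)$ is almost $\p$-internal by homogeneity. The second equivalence (POQ vs.\ special) runs along identical lines, replacing intersections by quotients and using the uniformity built into specialness to handle arbitrary almost $\p$-internal $\tp(a/C)$.

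The main obstacle is the forward direction: one must arrange the conjugate Morley sequence so that $b$ is captured by a single realisation $a_1$ via intersection of the respective algebraic closures. This requires careful use of stationarity of $p$ (to ensure $b$ is determined by the Morley data) and the classical independence fact on intersections in stable theories; POI then performs the remaining descent. Absent stationarity or the defining property of $\Cb(p)$, the passage from $\tp(b/a_1,\dots,a_n)$ algebraic to $\tp(b/a_1)$ almost $\p$-internal would fail.
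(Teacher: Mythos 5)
The proposal misidentifies where the real work lies and both directions as written have concrete gaps.

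In the direction ``every good $\Rightarrow$ POI'' (the substantive one), the setup is already off: POI has a \emph{single} hypothesis — $\stp(b/a)$ almost $\p$-internal — whereas you start from $\tp(a/B)$ and $\tp(a/C)$ both almost $\p$-internal, which is the hypothesis of POQ, not POI. The step ``By goodness of $p$, $\stp(b/a)$ is almost $\p$-internal, and hence so is $\stp(a,b)$'' is a non sequitur: almost internality of $\stp(b/a)$ tells you nothing about $\stp(a)$, and without it you cannot conclude anything about $\stp(a,b)$ over $\emptyset$. Finally, the ``canonical-base manipulation'' is exactly where the whole difficulty sits and cannot be dispatched in a line; the canonical base of $\tp(a/BC)$ need not have any useful relation to $\acleq(B)\cap\acleq(C)$. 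The paper proves this implication by a genuinely delicate contradiction: normalize to $\acleq(a)\cap\acleq(b)=\acleq(\emptyset)$, use Fact~\ref{F:level} to pass to the dominating tuple $b_\emptyset$ (the maximal almost $\p$-internal part of $\acleq(b)$), choose $a'$ maximizing $\U(b_\emptyset/a')$ among tuples with $\stp(b/a')$ almost $\p$-internal and $\acleq(a')\cap\acleq(b)=\acleq(\emptyset)$, replace $a'$ by a suitable $A,e'$, take an independent copy $b'$ of $b$ over $A,e'$ and apply goodness to $\stp(b/A,e')$; a Lascar-rank computation then yields $b\ind b'$ while $\stp(b/b')$ is almost $\p$-internal, contradicting that $\stp(b)$ is not. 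None of this is anticipated by the sketch.

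For the converse (POI $\Rightarrow$ every almost $\p$-internal stationary type is good), the conjugate-Morley-sequence plan does not mesh with POI. Applying POI to $b$ over the base $(a_1,\ldots,a_n)$ yields almost $\p$-internality of $\tp(b/\acleq(a_1,\ldots,a_n)\cap\acleq(b))$; but since $b\in\acleq(a_1,\ldots,a_n)$ that intersection is all of $\acleq(b)$, which gives nothing. The independence fact $\acleq(a_1,\ldots,a_n)\cap\acleq(a_1,a_2',\ldots,a_n')=\acleq(a_1)$ is what one would feed into a \emph{quotients}-type hypothesis, not into POI. The paper's argument is shorter and different: apply POI with the roles swapped (tuple $a$, base $b=\Cb(a/b)$) to get $\tp(a/\acleq(a)\cap\acleq(b))$ almost $\p$-internal, then use that $b$ is algebraic over finitely many $b$-conjugates of $a$ together with the closure properties of almost $\p$-internality to deduce $\tp(b/\acleq(a)\cap\acleq(b))$, hence $\stp(b/a)$, is almost $\p$-internal. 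I'd encourage you to revisit Fact~\ref{F:level} and the role of $b_\emptyset$: that is the key tool that makes the hard implication go through, and its absence is what leaves the ``canonical-base manipulation'' in the sketch unsupported.
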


Though most relevant examples of theories satisfy the CBP, Hrushovski, 
Palacín and Pillay \cite{HPP13} produced the so far only known example of 
an uncountably 
categorical 
theory without the CBP. We will give an alternative 
description of their counterexample in terms of additive covers of an 
algebraically closed field of characteristic zero. Covers are already 
present in early work of 
Hrushovski \cite{eH91}, Ahlbrandt and Ziegler \cite{AZ91} as well as of 
Hodges and Pillay \cite{HP94}. 
For an additive cover $\mathcal{M}$ of an algebraically closed field, the sort $S$ is the home-sort and
$P$ is the field-sort. The automorphism group $\Aut(\mathcal{M}/P)$ embeds 
canonically in 
the group of all additive maps on $P$. If the sort $S$ is almost $P$-internal, the CBP trivially holds.
The counterexample to the CBP has a ring structure on the sort $S$ and the ring multiplication
$\otimes$ is a lifting of the field multiplication. The automorphisms group over $P$ corresponds to the group of 
derivations, which ensures that the sort $S$ is not almost $P$-internal.
We prove the following 
result.

\begin{theoremB}\textup{(Propositions \ref{P:M1AutVersion} 
		and \ref{P:CBPpureCover})}
		The CBP holds whenever every 
		additive map on $P$ induces an automorphism in 
		$\Aut(\mathcal{M}/P)$. 
		If $\Aut(\mathcal{M}/P)$ corresponds to the group of 
		derivations, then the product $\otimes$
		is definable in $\mathcal{M}$.
\end{theoremB}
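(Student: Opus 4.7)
My plan is to treat the two assertions separately, leaning on a Galois-theoretic analysis of $\Aut(\mathcal{M}/P)$. In an additive cover, $\pi \colon S \to P$ is a surjective additive map whose kernel carries a $P$-vector space structure, and any $\sigma \in \Aut(\mathcal{M}/P)$ fixes $P$ pointwise and commutes with $+$, hence acts on each fiber by translation. Parameterizing these translations yields an embedding $\Aut(\mathcal{M}/P) \hookrightarrow \textnormal{Hom}_{\textnormal{add}}(P, P)$. For the first assertion, assume every additive map lies in the image, so that $\Aut(\mathcal{M}/P)$ is maximal. I would then argue that any further $\emptyset$-definable datum on $S$ must be a pullback from $P$, since a non-pullback relation would distinguish fiber elements in a way incompatible with arbitrary translation; the theory of $\mathcal{M}$ thus reduces to the pure additive cover of $\textnormal{ACF}_0$. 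I would verify CBP by decomposing a stationary type $p = \tp(a/A)$ into its $P$-projection and its kernel part: the former is a type in $\textnormal{ACF}_0$, where CBP is well known, while the latter lives in a modular $P$-vector space whose canonical base is $\acleq$ of any realization. Combining these shows $\stp(\Cb(p)/a)$ is almost $\p$-internal.

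For the second assertion, an explicit calculation in the dual-number presentation $S \cong P[\varepsilon]/(\varepsilon^2)$ shows that $\sigma_d \colon s \mapsto s + \iota(d(\pi(s)))$ preserves $\otimes$ if and only if $d$ satisfies the Leibniz rule, that is, $d$ is a derivation. Thus under the hypothesis that $\Aut(\mathcal{M}/P)$ corresponds to the group of derivations, the graph of $\otimes$ is $\Aut(\mathcal{M}/P)$-invariant. Since $\mathcal{M}$ is $\omega$-stable of finite rank, working in a saturated elementary extension and invoking the Galois correspondence yields, for each $(a,b) \in S^2$, that $a \otimes b \in \dcleq(P, a, b)$. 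A compactness and rank-finiteness argument promotes this pointwise definability to a uniform first-order formula, making $\otimes$ definable in $\mathcal{M}$.

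I expect the main obstacle to lie in the first assertion, specifically in converting the informal \emph{no extra structure} heuristic into actual control of canonical bases in parametrized families. One has to rule out that $\mathcal{M}$ carries some inconspicuous $P$-definable data---say a family of finite-index subgroups of the kernel, or a correspondence between distinct fibers---that is compatible with full additive invariance yet obstructs the clean decomposition of stationary types. I would handle this by first establishing a structural lemma describing $\emptyset$-definable subsets of $S^n$ in the pure additive cover (as affine translates over $P^n$ of pullbacks of field-definable sets), and only then invoking one-basedness of the kernel together with CBP for $\textnormal{ACF}_0$ to conclude.
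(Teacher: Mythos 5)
Your high-level intuitions for both assertions are sound, but both halves have genuine gaps at precisely the places where the paper does the real work.

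For the first assertion, you propose to reduce $\mathcal{M}$ to the pure cover $\mathcal{M}_0$ on the grounds that a maximal $\Aut(\mathcal{M}/P)$ forces every $\emptyset$-definable relation to be a pullback from $P$, and then to decompose a stationary type into a $P$-projection part and a kernel part. Neither step is carried out. The reduction needs a real argument: having the same automorphism group over $P$ does not by itself give the same definable sets without care about saturation and cardinality, and even then one must show that an $\emptyset$-definable relation invariant under all additive-map-induced automorphisms is $\mathcal{M}_0$-definable. Your proposed structural lemma on $\emptyset$-definable subsets of $S^n$ would be a substantial result in its own right, not a routine preliminary. More seriously, the decomposition strategy is incomplete: CBP for the $P$-projection (from ACF$_0$) together with one-basedness of the fibers does not automatically yield CBP for arbitrary imaginary types, since the interaction between the two layers is exactly what can go wrong. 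The paper avoids both issues by invoking the reduction of CBP to real stationary types over models (from \cite{BMPW12}, a step you do not mention), then choosing a least-rank formula $\varphi(x;\bar b,\bar\gamma)$ with $\bar b\in S\cap N$, $\bar\gamma\in P\cap N$ defining $p$, and directly constructing, from a $\mathbb{Q}$-linear-independence analysis of $\pi(\bar b)$ over $\pi(\bar a)$ and the hypothesis that every additive map induces an automorphism, an automorphism sending $\bar b$ to another realization of $\mathrm{d}_p x\,\varphi$ while fixing $\bar a$ and $P$. This shows $\Cb(p)\in\dcl(\bar a,P)$ directly, with no classification of definable sets and no decomposition of types.

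For the second assertion, your Galois argument correctly yields $a\otimes b\in\dcl(a,b,P)$ for each pair, since any automorphism over $P$ fixing $a$ and $b$ corresponds to a derivation vanishing on $\pi(a)$ and $\pi(b)$, hence on $\pi(a)\pi(b)$. But this is definability over the entire sort $P$ as a parameter set, and ``compactness and rank-finiteness'' does not by itself convert this into a single uniform formula for $\otimes$. The paper needs three further nontrivial steps that your sketch omits: first, an independence argument using Remark \ref{R:algFib} to strip away the $P$-parameters entirely and show $c\in\dcl(a,b)$ for generic $a=(\alpha,0)$, $b=(\beta,0)$, $c=(\alpha\beta,0)$, via the average-of-conjugates device; second, a conjugation by a derivation-induced automorphism and a field automorphism to move an arbitrary generic pair into the configuration the formula handles; and third, the Marker--Pillay germ construction to pass from generic definability of the product to a global definable multiplication. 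Each of these is where the actual difficulty lies, and none is a routine compactness argument.
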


We focus on additive covers in which the sort $S$ is not almost 
$P$-internal, since otherwise the CBP trivially holds and show that no 
such additive cover can eliminate imaginaries. On the other side,
the counterexample to the CBP does eliminate finite imaginaries, which 
fits into situation:

\begin{theoremC}\textup{(Theorem \ref{T:finImagCBP})}
	If $\mathcal{M}$ eliminates finite imaginaries, 
	then it cannot preserve internality on quotients, so in particular 
	the CBP  does not hold.
\end{theoremC}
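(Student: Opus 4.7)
The strategy is proof by contradiction. Since the case in which $S$ is almost $P$-internal is trivial (the CBP itself holds, so a fortiori do all its weakenings), I would assume that $S$ is not almost $P$-internal. Suppose further that $\mathcal{M}$ eliminates finite imaginaries and, for contradiction, that internality is preserved on quotients. By Theorem A the last assumption is equivalent to every stationary almost $\p$-internal type in $\mathcal{M}^{\textnormal{eq}}$ being special, so the task reduces to producing a stationary almost $\p$-internal type whose canonical base fails to be almost $\p$-internal.

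The construction should draw on the additive-cover structure: consider tuples $(a_1,\ldots,a_n)$ of elements of $S$ whose projections $\pi(a_i)$ form a generic configuration in $P$, so that the fiberwise differences encode the action of $\Aut(\mathcal{M}/P)$ on $S$. Using the imaginary analysis developed earlier in the paper, the canonical base of a suitably chosen quotient of such a type should be identified with an imaginary whose real avatar, if it exists, would witness the non-$\p$-internal content distinguishing $\Aut(\mathcal{M}/P)$ from the full group of additive self-maps of $P$.

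The decisive input is EFI: any finite-orbit imaginary component appearing in the canonical base must be represented by real parameters. Combined with the fact that the quotient group $(\text{additive maps})/\Aut(\mathcal{M}/P)$ naturally acts with finite orbits on certain algebraic closures, this should allow one to locate the canonical base in the real sorts, where the non-almost-$P$-internality of $S$ forces it essentially into $P$. The resulting contradiction is that this canonical base, which specialness demands to be almost $\p$-internal, actually codes derivation-like infinitesimal data strictly beyond what $P$ can name. I expect the main difficulty to be the simultaneous calibration of the two sides: verifying that the chosen type is genuinely almost $\p$-internal while its canonical base is not, which is exactly the kind of information the paper's imaginary analysis in additive covers is designed to supply.

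Finally, the failure of preservation on quotients immediately implies the failure of the CBP, since the CBP implies the UCBP (Chatzidakis), which in turn implies preservation on quotients, as recalled in the introduction.
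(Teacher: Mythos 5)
Your high-level frame is right: assume $S$ is not almost $P$-internal, suppose for contradiction that $\mathcal{M}$ both eliminates finite imaginaries and preserves internality on quotients, and use the chain CBP $\Rightarrow$ UCBP $\Rightarrow$ preservation on quotients for the ``in particular'' at the end. But the core of your argument is missing, and what you do sketch in its place points in a direction the proof does not go.

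The decisive step in the paper's argument is a Martin-style recovery of a multiplication-like operation from elimination of finite imaginaries. One starts with two generic independent elements $a_0=(\alpha_0,0)$, $a_1=(\alpha_1,0)$ in $S$ and observes that, since $S$ is not almost $P$-internal, the real canonical parameter of the finite set $\{a_0,a_1\}$ cannot be definable over $a_0\oplus a_1,P$; so some coordinate $e$ of it fails to be so definable. Its projection $\pi(e)=r(\alpha_0,\alpha_1)$ is a \emph{symmetric rational function}, and from this $r$ one extracts (via a case analysis on whether $r(\alpha_0,Y)$ is a polynomial, and an elementary manipulation with degrees) a definable operation that plays the role of the ring multiplication $\otimes$. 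Once that surrogate multiplication is in hand, one replicates the $(a\otimes c)\oplus b=d$ configuration from $\mathcal{M}_1$ and checks, exactly as in Proposition \ref{P: M1PropB}, that the type of $\ulcorner E\urcorner$ is $P$-internal over two bases $(\alpha,\beta)$ and $(\gamma,\delta)$ with trivial $\acleq$-intersection, while $\stp(\ulcorner E\urcorner)$ is not almost $P$-internal because a suitable automorphism over $P$ moves it. None of this appears in your proposal; instead you appeal to a ``quotient group $(\text{additive maps})/\Aut(\mathcal{M}/P)$'' acting ``with finite orbits on certain algebraic closures,'' which is not the mechanism here. More seriously, your phrase that the canonical base ``codes derivation-like infinitesimal data'' presumes that such structure is already there; in a general additive cover nothing forces $\Aut(\mathcal{M}/P)$ to contain derivations, and precisely for this reason one must \emph{manufacture} a multiplication from EFI rather than assume it. Without that step the contradiction you want cannot be reached.

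As a smaller point, your restatement ``the task reduces to producing a stationary almost $\p$-internal type whose canonical base fails to be almost $\p$-internal'' is closer to a failure of goodness (hence of preservation on intersections) than of specialness; the negation of specialness involves an auxiliary parameter set $C$, or equivalently two bases with trivial intersection as in Definition \ref{D:AB}. The paper works with the latter formulation directly, which is also what makes the argument land.
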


A standard argument shows that the CBP holds whenever it holds for all 
real stationary types. We will note that in the counterexample to the CBP 
the corresponding real versions of goodness and specialness hold, namely, 
every real stationary almost $P$-internal type is special. However the  
version for real types does not imply the full condition and gives a new 
proof of the failure of the CBP.

\begin{theoremD}\textup{(Propositions \ref{P: M1PropB} 
		and \ref{P: M1PropA})}
	The counterexample to the CBP does not preserve internality on intersections.
\end{theoremD}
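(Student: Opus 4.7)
The plan is to apply Theorem A in contrapositive: to show that the counterexample $\mathcal{M}$ does not preserve internality on intersections, it suffices to exhibit a stationary almost $P$-internal type $p$ in $\mathcal{M}^{\textnormal{eq}}$ that is not good, namely, a realization $a$ of $p$ over which $\stp(\Cb(p)/a)$ fails to be almost $P$-internal. In view of the remark preceding the statement of Theorem D, every \emph{real} stationary almost $P$-internal type in $\mathcal{M}$ is special, hence good; so the desired witness must live in a genuinely imaginary sort. This is consistent with the fact that the result gives a \emph{new} proof of the failure of the CBP which does not follow from the standard real-type reduction.

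To build the witness I would exploit the lifted multiplication $\otimes$, which is definable in $\mathcal{M}$ by Theorem B, together with the identification of $\Aut(\mathcal{M}/P)$ with the group of derivations of $P$. An element of this Galois group is completely determined by its action on a transcendence basis of $P$, so any imaginary recording a piece of the cocycle data of $\otimes$ at a generic point carries genuine derivation-theoretic information that cannot be absorbed into $\acleq(P)$. Concretely, I would take independent generic $a,b\in S$ lying above transcendental elements of $P$, form an imaginary $e$ as the class of a suitable definable equivalence relation on the fibre of $\otimes$ above $\pi(a)\pi(b)$, and let $p=\stp(e/C)$ for a carefully chosen base $C$. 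The target is that $p$ become almost $P$-internal (so that Theorem A applies) while $\Cb(p)$ encodes precisely the deformation datum of the lifted multiplication at $(\pi(a),\pi(b))$, and remains moved by $\Aut(\mathcal{M}/P)$ even after fixing a realization of $p$.

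The main obstacles are twofold. First, one must pick the definable equivalence relation and the parameter set $C$ carefully: the quotient must be coarse enough for $p$ to be almost $P$-internal overall, yet still fine enough for the cocycle information to survive in $\Cb(p)$. Here the study of imaginaries in additive covers developed earlier in the paper should supply the relevant definable quotients, since these are exactly the objects not collapsed by elimination of finite imaginaries. Second, one must verify by an explicit group-theoretic computation that the subgroup of derivations of $P$ fixing a realization of $p$ still acts faithfully on $\Cb(p)$; this would show $\stp(\Cb(p)/a)$ is not almost $P$-internal. Both steps rest on the explicit additive-cover presentation of $\mathcal{M}$ and on Theorem B, which pins down $\Aut(\mathcal{M}/P)$ as the derivation group and simultaneously guarantees definability of $\otimes$.
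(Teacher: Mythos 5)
Your high-level plan — invoke Theorem A (Proposition \ref{P:propA}) in contrapositive, locate a genuinely imaginary almost $P$-internal type that fails to be good, and use the identification of $\Aut(\mathcal{M}_1/P)$ with derivations — is the right frame, and it matches the spirit of the paper's argument (the paper works directly from Definition \ref{D:AB}, exhibiting an imaginary $b$ and a base $a$ with $\stp(b/a)$ almost $P$-internal but $\tp(b/\acleq(a)\cap\acleq(b))$ not, which is the same thing up to Proposition \ref{P:propA}). However, the proposal stops exactly where the difficulty begins: no concrete imaginary, no concrete base, and no indication of how the two Claims that actually carry the argument would be established. That is a genuine gap, not a stylistic omission.

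Concretely, three things are missing. First, the imaginary witness: the paper takes generic independent $a_1, a_2 \in S$, a generic $\varepsilon \in P$, and the canonical parameter $\ulcorner E\urcorner$ of the coset
$E=\{(x,y):\exists(\lambda,\mu)\,(\lambda\star a_1=x\ \wedge\ \mu\star a_2=y\ \wedge\ \varepsilon\lambda+\mu=0)\}$.
This is not a quotient of a $\otimes$-fibre and has nothing to do with the cocycle data of $\otimes$; it is a coset of a one-dimensional subgroup of $\pi^{-1}(\alpha_1)\times\pi^{-1}(\alpha_2)$ whose slope $\varepsilon$ is deliberately made a new generic parameter. Your proposal to form $e$ as a class of an equivalence relation on a fibre of $\otimes$ would need to be replaced by, or shown equivalent to, something of this form before anything can be computed. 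Second, the base: the paper chooses $\bar\beta=(\beta_1,\beta_2,\beta_3)\in P^3$ with $\beta_1$ generic and $\beta_2,\beta_3$ solving two explicit linear equations in $\alpha_1,\alpha_2,\varepsilon$, engineered precisely so that any derivation $F_\sigma$ vanishing on $\bar\beta$ automatically satisfies $\varepsilon F_\sigma(\alpha_1)+F_\sigma(\alpha_2)=0$; that is the whole content of Claim~1 (internality of $\stp(\ulcorner E\urcorner/\bar\beta)$). Your phrase ``carefully chosen base $C$'' gives no indication of this mechanism, and without it there is no reason to believe that internality over the base can be arranged while leaving $\Cb$ unabsorbed. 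Third, and most seriously, the paper must verify $\acleq(\ulcorner E\urcorner)\cap\acleq(\bar\beta)=\acleq(\emptyset)$ (Claim~2), which reduces to a Morley-rank count over a chain of six independent copies and finally to the nonvanishing of an explicit $8\times 8$ determinant. Nothing in the proposal anticipates that this intersection computation is where the real work lies, and the asserted ``explicit group-theoretic computation that the subgroup of derivations fixing a realization of $p$ still acts faithfully on $\Cb(p)$'' does not address it: even if derivations act faithfully, one still needs the intersection to be trivial in order to apply the definition of preservation on intersections, and establishing that requires the rank/determinant argument, not a statement about faithfulness.

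So the proposal identifies a correct strategy but does not contain a proof. The decisive missing ideas are: (i) replacing the vague ``cocycle imaginary'' by the coset $\ulcorner E\urcorner$ with an extra generic slope $\varepsilon$, (ii) solving for $\bar\beta$ so that vanishing of $F_\sigma$ on $\bar\beta$ forces the defining relation of $E$, and (iii) the rank-and-determinant verification of the trivial intersection. Without (i)--(iii) the argument does not get off the ground.
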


Palac\'in and Pillay ~\cite{PP17} considered a strengthening of the CBP, 
called the strong canonical base property, which we show cannot 
hold in any additive cover, where $S$ is not almost $P$-internal.
Regarding a question which arose in \cite{PP17},
we prove that no \textit{pure} Galois-theoretic  account of the CBP can be 
provided.

In a forthcoming work, we use the approach with additive covers in order 
to produce new counterexamples to the CBP.

\begin{akn}
The author would like to thank his supervisor Amador Mart\'in Pizarro for 
numerous helpful discussions, his support, generosity and guidance.
He also would like to thank Daniel Palac\'in for multiple interesting 
discussions. Part of this research was carried out at the University of 
Notre Dame (Indiana, USA) with financial support from the 
DAAD, which the author gratefully acknowledges. The author would like to 
thank for the 
hospitality and Anand Pillay for many helpful discussions and for 
suggesting the study of imaginaries in the counterexample to the CBP.

\end{akn}	
	
\section{The Canonical Base Property and Related Properties}\label{S:CBPAB}

In this section we introduce two properties related to the canonical base 
property. We assume throughout this article a 
solid knowledge in geometric stability theory  \cite{aP96,TZ12}. 
Most of the results in this section can be found in \cite{zC12}. 

Let us fix a complete stable theory of finite Lascar rank. As usual, we 
work inside a sufficiently saturated ambient model. We denote by $\p$ the 
$\emptyset$-invariant family of all non-locally modular minimal types. 

The following notions provide an equivalent formulation of the CBP and the 
UCBP. They will play a crucial role in our attempt to weaken the CBP to 
other contexts.   

\begin{definition}\label{D:good}
 A stationary type $p$ is:
\begin{itemize}
	\item \emph{good} if $\stp(\Cb(p)/a)$ is almost 
	$\p$-internal for some (any) realization $a$ of $p$,
	\item \emph{special} if, for every parameter set $C$ and every 
	realization $a$ of $p$, whenever $\stp(a/C)$ almost $\p$-internal, so 
	is $\stp(\Cb(p)/C)$ almost $\p$-internal. 
\end{itemize}
\end{definition}

\begin{remark}\label{R:CBP_AB}~
	\begin{enumerate}[(a)]
		\item Note that every special type is good, by setting $C=\{a\}$. 
		\item It is 
		immediate from the definitions that the theory $T$ has 
		the CBP, resp.\  the UCBP, if and only if every 
		stationary type in  $T^\textnormal{eq}$ is good, resp.\ special. 
		\item Analog to \cite[Remark 2.6]{aP95}, it can be easily shown 
		that 
		whether or not every stationary type is good, resp. special, is 
		preserved 
		under naming parameters.
	\end{enumerate}
\end{remark}

Chatzidakis showed in \cite[Theorem 2.5]{zC12} that the CBP already 
implies the UCBP for (simple) theories of finite rank. In order to prove 
so, she first shows in \cite[Proposition 2.1]{zC12} that, under the CBP, 
the 
type 
$\tp(b/\acleq(a)\cap\acleq(b))$ is almost $\p$-internal, whenever 
$\stp(b/a)$ is almost $\p$-internal, and secondly in \cite[Lemma 
2.3]{zC12}, 
that  
$\tp(b/\acleq(a_1)\cap \acleq(a_2))$ is almost $\p$-internal, if both 
$\stp(b/a_1)$ and $\tp(b/a_2)$ are. Motivated by her work, we now 
introduce two notions capturing these intermediate steps and 
study their relation to the CBP. 

\begin{definition}\label{D:AB}
	The theory $T$ \emph{preserves internality on intersections} if 
	the type 
	\[\tp(b/\acleq(a)\cap\acleq(b))\]
	is almost $\p$-internal, 
	whenever $\stp(b/a)$ is almost $\p$-internal.
	Similarly, the theory 
	 \emph{preserves internality on quotients} if  the type 
	 \[\tp(b/\acleq(a_1)\cap \acleq(a_2))\] 
	 is almost $\p$-internal, 
	 whenever both $\stp(b/a_1)$ and $\tp(b/a_2)$ are.
\end{definition}

In order to relate the above properties to consequences of the CBP, we 
will need the following observation.

\begin{fact}\label{F:level}\textup{(}\cite[Proposition 1.18]{zC12} 
\textnormal{ \& } \cite[Theorem 3.6]{PW13}\textup{)}

Let $\stp(b/A)$  and $\stp(b/C)$ be two $\p$-analysable types. 
	\begin{enumerate}[(a)] 
		\item The type  $\stp(b/\acleq(A)\cap\acleq(C))$ is again 
		$\p$-analysable. In particular, so is  
		$\stp(b/\acleq(A)\cap\acleq(b))$ also $\p$-analysable.

	\item Let $b_A$ be the maximal 
	subset of $\acleq(A,b)$ such that $stp(b_A /A)$ 
	is almost $\p$-internal. The tuple $b_A$ (in some fixed enumeration) 
	dominates $b$ over 
	$A$, that is, for every set of parameters $D\supset A$,  
		\[ b \ind_A D \ \ \text{ whenever } \  \ b_A \ind_A D.\]
	Furthermore, whenever $\acleq(D)\cap\acleq(A,b_A)=\acleq(A)$, so is
	\[ \acleq(D)\cap\acleq(A,b)=\acleq(A).\]
	\end{enumerate}
\end{fact}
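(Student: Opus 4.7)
The plan is to prove the two parts separately, each by invoking standard tools for $\p$-analysable types, and then to extract the ``in particular'' and ``furthermore'' clauses as easy corollaries.

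For part (a), I would apply the foreignness characterization of $\p$-analysability: a stationary type $\stp(b/E)$ is $\p$-analysable precisely when every $b' \in \dcleq(E,b) \setminus \acleq(E)$ has $\stp(b'/E)$ non-foreign to $\p$. Setting $E=\acleq(A)\cap\acleq(C)$, I would pick $b' \in \dcleq(E,b)\setminus\acleq(E)$. Since $E$ is contained in both $\acleq(A)$ and $\acleq(C)$, the element $b'$ lies in $\acleq(A,b)\cap\acleq(C,b)$, so $\stp(b'/A)$ and $\stp(b'/C)$ are both $\p$-analysable. Because $b'\notin \acleq(E)=\acleq(A)\cap\acleq(C)$, at least one of $b'\notin \acleq(A)$ or $b'\notin\acleq(C)$ holds, say the former. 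Then $\stp(b'/A)$ is non-algebraic and $\p$-analysable, hence non-foreign to $\p$. Any witness $(F,b'',c)$ of this non-foreignness over $A$ is automatically a witness over the smaller set $E \subseteq A$, giving non-foreignness of $\stp(b'/E)$. The ``in particular'' clause is obtained by specializing $C=\{b\}$, since $\stp(b/b)$ is algebraic.

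For the domination in part (b), I would fix a $\p$-analysis $\acleq(A)=E_0\subseteq E_1=\acleq(A,b_A)\subseteq\cdots\subseteq E_n$ of $b$ over $A$ with $b\in E_n$ and each $\stp(E_{i+1}/E_i)$ almost $\p$-internal. Assuming $b_A \ind_A D$, I would show by induction on $i$ that $E_i\ind_A D$, which yields $b\ind_A D$. The base case $i\le 1$ is immediate from the hypothesis; for the inductive step, the maximality of $b_A$ ensures that the successor step $\stp(E_{i+1}/E_i)$ cannot contribute $\p$-content not already captured by $b_A$, and hence cannot produce new forking with $D$. This is precisely the content of the level-lemma in \cite[Proposition 1.18]{zC12}. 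For the ``furthermore'', assume $\acleq(D)\cap\acleq(A,b_A)=\acleq(A)$ and take $d \in \acleq(D)\cap\acleq(A,b)$. Let $d_A$ denote the maximal almost $\p$-internal part of $\stp(d/A)$. Then $d_A \in \acleq(A,b_A)$ by the maximality of $b_A$ inside $\acleq(A,b)$, since $d_A \cup b_A$ is almost $\p$-internal over $A$ and contained in $\acleq(A,b)$. Moreover $d_A \subseteq \acleq(A,d)\subseteq\acleq(A\cup D)=\acleq(D)$, using $A\subseteq D$, so $d_A \in \acleq(D)\cap\acleq(A,b_A)=\acleq(A)$ by hypothesis. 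Applying the domination part of (b) to $d$ in place of $b$ yields $d\ind_A D$, and combined with $d\in\acleq(D)$, preservation of $\U$-rank under non-forking extensions forces $d\in\acleq(A)$.

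The step I expect to be the main obstacle is the inductive step in the domination part of (b): justifying that the higher levels of the $\p$-analysis above $b_A$ do not contribute any new $\p$-content forcing fresh forking with $D$ requires the genuine technical input of the level-lemma of Chatzidakis, which pins down the levels of a $\p$-analysis canonically modulo algebraic closure. Once that is in place, part (a) is a soft descent of non-foreignness and the ``furthermore'' is a bookkeeping consequence of the domination.
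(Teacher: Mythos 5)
The paper does not prove this statement. It is presented as a \emph{Fact} and attributed to \cite[Proposition~1.18]{zC12} and \cite[Theorem~3.6]{PW13}; there is therefore no internal proof to compare your attempt against.

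That said, your attempt as written has a genuine gap in part~(a). Having established that $\stp(b'/A)$ is non-algebraic and $\p$-analysable, hence non-foreign to $\p$, you assert that a witness of non-foreignness over $A$ is ``automatically'' a witness over $E=\acleq(A)\cap\acleq(C)$. This does not hold: a witness over $A$ involves a set $F\supseteq A$ with $F\ind_A b'$, and there is no reason for $F\ind_E b'$ to follow --- you would need $b'\ind_E A$, which is not known. In fact, foreignness is not monotone in the base in the direction you need; only the case $b'\ind_E A$ is immediate (there one uses that foreignness is preserved under nonforking restriction). The residual case $b'\nind_E A$ is exactly where the real work lies, and it is where Chatzidakis's machinery of canonical levels of a $\p$-analysis is used; your sketch does not address it, nor does it visibly use the $\p$-analysability of $\stp(b/C)$ except to split into ``$b'\notin\acleq(A)$ or $b'\notin\acleq(C)$.''

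In part~(b), the domination step is not actually proved: you reduce it to the ``level lemma'' and then explicitly invoke \cite[Proposition~1.18]{zC12}, which is the very result under discussion. Since the paper itself only cites this, that may be acceptable in context, but you should be aware that the inductive step --- that the layers of the analysis above $b_A$ produce no fresh forking with $D$ --- is the entire content of the theorem and needs the canonicity of the levels $\ell^n_{\p}(b/A)$; it is not a soft maximality observation. The ``furthermore'' deduction you give (passing to $d_A$, using maximality of $b_A$ to put $d_A$ into $\acleq(A,b_A)$, then applying domination to $d$ and $\U$-rank preservation) is sound once domination is granted.
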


\begin{proposition}\label{P:propA}
	The theory $T$ preserves internality on intersections if and only if 
every stationary almost $\p$-internal type in  $T^\textnormal{eq}$ is good.
\end{proposition}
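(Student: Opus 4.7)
The argument splits naturally into the two implications; the forward direction is the more delicate one.

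For $(\Leftarrow)$, assume every stationary almost $\p$-internal type in $T^{\textnormal{eq}}$ is good. Given $\stp(b/a)$ almost $\p$-internal, pass to the canonical base $c := \Cb(\stp(b/a)) \in \dcleq(a)$ and observe that the restriction $\stp(b/c)$ remains almost $\p$-internal. Applying the goodness hypothesis to this stationary almost $\p$-internal type yields that $\stp(c/b)$ is almost $\p$-internal as well. Adapting the argument of Chatzidakis in \cite[Proposition 2.1]{zC12}, I then show that $\tp(b/e)$ is almost $\p$-internal for every $e \in \acleq(a) \cap \acleq(b)$: Fact~\ref{F:level}(a) provides $\p$-analysability of $\stp(b/e)$, and Fact~\ref{F:level}(b), combined with the two-sided almost $\p$-internality between $b$ and $c$, forces the maximal almost $\p$-internal part $b_{e}$ of $\acleq(eb) = \acleq(b)$ to equal $\acleq(b)$.

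For $(\Rightarrow)$, assume preservation on intersections and let $p = \stp(a_{0}/c)$ be a stationary almost $\p$-internal type with $c = \Cb(p)$. Take a Morley sequence $(a_{1}, \ldots, a_{n})$ of $p$ over $c$ independent of $a_{0}$ over $c$, with $n$ large enough that $c \in \acleq(a_{1}, \ldots, a_{n})$. Because $c$ lies in the algebraic closure of the parameter set, the non-forking extension of $p$ over $c, a_{1}, \ldots, a_{n}$ being almost $\p$-internal implies that $\stp(a_{0}/a_{1}, \ldots, a_{n})$ is almost $\p$-internal (adjoining an algebraic element to the parameter base neither creates nor destroys almost internality). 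Preservation on intersections then yields that $\tp(a_{0}/E)$ is almost $\p$-internal, where $E := \acleq(a_{1}, \ldots, a_{n}) \cap \acleq(a_{0})$. By analysing the canonical base of $\tp(a_{0}/E)$ via Fact~\ref{F:level} and the indiscernibility of the Morley sequence, I aim to identify the canonical base with $c$ up to interalgebraicity, thereby placing $c$ into $\acleq(E) \subseteq \acleq(a_{0})$; this renders $\stp(c/a_{0})$ algebraic and hence trivially almost $\p$-internal, proving that $p$ is good.

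The main obstacle will be the canonical base identification in the forward direction: showing that $\tp(a_{0}/E)$ remembers all of $c$ rather than only a proper piece of it. Fact~\ref{F:level}(a) alone would only yield $\p$-analysability of $\tp(a_{0}/E)$; the preservation hypothesis strengthens this to almost $\p$-internality, and the symmetry provided by the indiscernibility of the Morley sequence is what should allow extracting the full canonical base $c$ from a single realization $a_{0}$.
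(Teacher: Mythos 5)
Both directions of your sketch diverge from the paper, and both contain real gaps.

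\textbf{The implication \textquotedblleft preserves internality on intersections $\Rightarrow$ every almost $\p$-internal type is good\textquotedblright.} You call this the delicate direction, but it is actually the short one, and your route for it cannot work. You propose to identify $\Cb(\tp(a_0/E))$ with $c$ up to interalgebraicity, where $E=\acleq(a_1,\ldots,a_n)\cap\acleq(a_0)\subseteq\acleq(a_0)$, thereby forcing $c\in\acleq(a_0)$ and making $\stp(c/a_0)$ algebraic. This intermediate claim is false in general: in an algebraically closed field, take $c=(c_1,c_2)$ a generic pair and $a_0$ a generic point of the line $y=c_1x+c_2$; then $c=\Cb(a_0/c)$ but $c\notin\acleq(a_0)$, even though both the hypothesis and the conclusion of the proposition hold. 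Goodness asserts only that $\stp(\Cb(p)/a_0)$ is almost $\p$-internal, not that it is algebraic; your argument would prove a form of one-basedness. The paper's proof is much more direct: with $b=\Cb(a/b)$, preservation gives that $\tp(a/\acleq(a)\cap\acleq(b))$ is almost $\p$-internal; since $b$ is algebraic over finitely many $b$-conjugates of $a$, each of which realizes $\tp(a/\acleq(a)\cap\acleq(b))$, the type $\tp(b/\acleq(a)\cap\acleq(b))$, and hence $\stp(b/a)$, is almost $\p$-internal. No Morley sequence or canonical-base identification is needed.

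\textbf{The implication \textquotedblleft every almost $\p$-internal type is good $\Rightarrow$ preserves internality on intersections\textquotedblright.} Here you replace $a$ by $c=\Cb(b/a)$, apply goodness to get $\stp(c/b)$ almost $\p$-internal, and then assert that \textquotedblleft two-sided almost $\p$-internality between $b$ and $c$, via Fact~\ref{F:level}, forces $b_e=\acleq(b)$.\textquotedblright\ This is the entire content of what must be proven, and no argument is given. The argument of Chatzidakis you propose to \textquotedblleft adapt\textquotedblright\ assumes the full CBP, i.e.\ goodness of \emph{all} stationary types, not merely of the almost $\p$-internal ones, and it is precisely this weakening that makes the statement non-trivial. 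The paper's proof of this direction is considerably more involved: it proceeds by contradiction, chooses among all possible $a'$ one maximizing $\U(b_\emptyset/a')$ subject to $\stp(b/a')$ almost $\p$-internal and $\acleq(a')\cap\acleq(b)=\acleq(\emptyset)$, extracts a subtuple $e'$ of an internalizing tuple $e$, takes an independent copy $b'$ of $b$ over $A,e'$, applies goodness to $\stp(b/A,e')$ to get $\stp(\Cb(b/A,e')/b')$ almost $\p$-internal, and then uses the $\U$-rank maximality together with Fact~\ref{F:level}(b) to derive $b\ind b'$ and a contradiction. None of this structure is visible in your sketch, and I do not believe the direct deduction you gesture at goes through without it.
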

\begin{proof}
	
	We assume first that every stationary almost $\p$-internal type is 
	good, but 
	the conclusion fails, 
	witnessed by 
	two tuples $a$ and $b$. By Remark \ref{R:CBP_AB}, we may assume 
	\[ \acleq(a)\cap\acleq(b)=\acleq(\emptyset).\]

	Thus, the type $\stp(b/a)$ is 
	almost $\p$-internal, but the type $\stp(b)$ is not. Note that 
	$\stp(b)$ is $\p$-analysable, by Fact \ref{F:level}.

	Among all possible (imaginary) tuples in the ambient model take now 
	$a'$ such that 
	$\stp(b/a')$ is almost $\p$-internal and 
	\[\acleq(a')\cap\acleq(b)=\acleq(\emptyset)\]
	with $\U(b_\emptyset/a')$ maximal. Since $\stp(b/a')$ is almost 
	$\p$-internal, there is a set of parameters $A$ containing $a'$ with 
	$A \ind_{a'} b$ such that $b$ is algebraic over $Ae$, where $e$ is a 
	tuple of 
	realizations of types (each one based over $A$) in $\p$. Since each 
	type in the family $\p$ is minimal, we may assume, after possibly 
	enlarging $A$, that $e$ and $b$ are interalgebraic over $A$. 	
	
	 Let now $e'$ be a maximal 
	subtuple of $e$ independent from $b_\emptyset$ over $A$, so \[ e' 
	\ind_{A} 
	b_\emptyset \ \ \text{ and } \ \ e \in \acleq(A, e', b_\emptyset).\] 
	Hence, 
	the tuple $b$ 
	is algebraic over 
	$Ae' b_\emptyset $ and 
	\[\acleq(A,e')\cap\acleq(b_\emptyset)\subset 
	\acleq(a')\cap\acleq(b)=\acleq(\emptyset).\]
	Therefore 
	$\acleq(A,e')\cap\acleq(b) =\acleq(\emptyset)$, by Fact \ref{F:level}.
	
	Notice that $\stp(b/A, e')$ is almost $\p$-internal, yet this does not 
	yield any contradiction since 
	$\U(b_\emptyset/A,e')=\U(b_\emptyset/a')$.  Choose now $b'$ realizing 
	$\stp(b/A,e')$  independent from $b$ over $A, e'$. An easy forking 
	computation yields 
	\[ \acleq(b')\cap\acleq(b)=\acleq(\emptyset).\] By the hypothesis we 
	have that the almost $\p$-internal type 
	\[\stp(b'/\acleq(A,e'))=\stp(b/\acleq(A,e')) \] is good,
    so we deduce that $\stp(\Cb(b/A,e')/b')$ is almost $\p$-internal. 
    Remark that $b$ is algebraic over $\Cb(b/A,e', b_\emptyset)$ and thus 
    also algebraic over $b_\emptyset \Cb(b/A,e')$. 
    
    Putting all of the above together, we conclude that the type     
	$\stp(b/b')$ is almost $\p$-internal. Since 
	\[\U(b_\emptyset/b')\geq \U(b_\emptyset/A, e',b') = 
	\U(b_\emptyset/A,e')=\U(b_\emptyset/a'),\] 
	we deduce by the maximality of $\U(b_\emptyset/a')$ that 
	$\U(b_\emptyset/b')= \U(b_\emptyset/A, e',b')$, that is, \[ 
	b_\emptyset \ind_{\acleq(A,e')\cap \ \acleq(b')} A, e', b'.\]

	Hence 	$b_\emptyset \ind b'$, so 
	$b \ind b'$, by Fact \ref{F:level}, contradicting that $\stp(b)$ is 
	not almost $\p$-internal.
	
	For the other direction, we need to show that  the almost 
	$\p$-internal 
	type $\stp(a/b)$ is good, that is, that $\stp(\Cb(a/b)/a)$ is almost 
	$\p$-internal. We may assume that $b$ equals the canonical base 
	$\Cb(a/b)$. Superstability yields that $b$ is contained in the 
	algebraic closure of 
	finitely many $b$-conjugates of $a$. By preservation of internality on 
	intersections, the type  
	$\tp(a/\acleq(a)\cap\acleq(b))$ is almost $\p$-internal, so it follows 
	that \[\tp(b/\acleq(a)\cap\acleq(b))\] is almost $\p$-internal. Hence, 
	the type  $\stp(b/a)$ is almost $\p$-internal, as desired. 
~\end{proof}

It follows now from Remark \ref{R:CBP_AB} that preservation of internality 
on intersections does not depend on constants being named. 
\begin{corollary}\label{C:NamingParametersIntersections}
Preservation of internality on intersections is invariant under naming and 
forgetting parameters.
\end{corollary}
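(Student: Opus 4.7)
The plan is to combine Proposition \ref{P:propA} with Remark \ref{R:CBP_AB}(c). By Proposition \ref{P:propA}, the assertion ``$T$ preserves internality on intersections'' is equivalent to ``every stationary almost $\p$-internal type in $T^{\textnormal{eq}}$ is good''. Remark \ref{R:CBP_AB}(c) in turn asserts that this latter kind of condition is preserved when one passes between $T$ and its expansion $T_A$ obtained by naming a set $A$ of parameters.

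Thus, to see that preservation of internality on intersections transfers from $T$ to $T_A$, I would first apply Proposition \ref{P:propA} to rewrite the hypothesis as a statement about goodness of stationary almost $\p$-internal types in $T^{\textnormal{eq}}$; then invoke Remark \ref{R:CBP_AB}(c) to transfer that statement to $T_A^{\textnormal{eq}}$; and finally apply Proposition \ref{P:propA} once more, in the opposite direction, to conclude that $T_A$ preserves internality on intersections. The converse direction and the statement about forgetting parameters are handled identically, since naming and forgetting are inverse operations, yielding full invariance.

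No genuine obstacle arises here: the corollary is a formal consequence of its two predecessors. The only point worth verifying is that the characterization of Proposition \ref{P:propA} applies in both $T$ and $T_A$, which is immediate because $\p$ is $\emptyset$-invariant and Proposition \ref{P:propA} was proved for an arbitrary complete stable theory of finite Lascar rank; in particular, it applies equally well to $T_A$.
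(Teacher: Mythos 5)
Your argument matches the paper's: the corollary is obtained exactly by combining Proposition~\ref{P:propA} with Remark~\ref{R:CBP_AB}(c), rewriting preservation of internality on intersections as ``every stationary almost $\p$-internal type in $T^{\textnormal{eq}}$ is good'' and transferring that statement across naming and forgetting parameters. The only small caveat, which the paper itself also glosses over, is that Remark~\ref{R:CBP_AB}(c) is phrased for all stationary types rather than only the almost $\p$-internal ones; the underlying argument restricts without difficulty to that subclass because almost $\p$-internality is itself invariant under naming or forgetting parameters in a theory of finite rank.
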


\begin{remark}\label{R:CCBP}
 It follows from Remark \ref{R:CBP_AB} and Proposition \ref{P:propA} that 
 the CBP is equivalent to the property that 
	whenever $b=\Cb(a/b)$, then $\tp(b/\acleq(a)\cap\acleq(b))$ is almost 
	$\p$-internal, which was already shown in \cite[Theorem 2.1]{zC12}.
\end{remark}

\begin{proposition}\label{P:propB}
	The theory $T$  preserves internality on quotients if and only if 
	every stationary almost $\p$-internal type in  $T^\textnormal{eq}$ is 
	special.
\end{proposition}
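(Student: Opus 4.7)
The proof mirrors that of Proposition \ref{P:propA}, with preservation on quotients in place of preservation on intersections and specialness in place of goodness. For the $(\Leftarrow)$ direction, assume every stationary almost $\p$-internal type in $T^{\textnormal{eq}}$ is special and suppose $\stp(b/a_1)$ and $\tp(b/a_2)$ are both almost $\p$-internal. By a naming-parameters reduction analogous to Corollary \ref{C:NamingParametersIntersections}, we may assume $\acleq(a_1)\cap\acleq(a_2)=\acleq(\emptyset)$, reducing the claim to showing that $\stp(b)$ is almost $\p$-internal. Let $c:=\Cb(\stp(b/a_1))\in\acleq(a_1)$; then $\stp(b/c)$ is a stationary almost $\p$-internal type with canonical base $c$. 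Applying specialness of $\stp(b/c)$ to the parameter set $a_2$ converts the hypothesis $\tp(b/a_2)$ almost $\p$-internal into $\stp(c/a_2)$ almost $\p$-internal. Since every special type is good (Remark \ref{R:CBP_AB}(a)), Proposition \ref{P:propA} gives preservation of internality on intersections; applied to $\stp(c/a_2)$ together with $\acleq(c)\cap\acleq(a_2)\subseteq\acleq(a_1)\cap\acleq(a_2)=\acleq(\emptyset)$, this yields $\stp(c)$ almost $\p$-internal. A standard transitivity argument (via Fact \ref{F:level}) then combines $\stp(c)$ and $\stp(b/c)$ to yield $\stp(b)$ almost $\p$-internal.

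For the $(\Rightarrow)$ direction, assume preservation of internality on quotients. Taking $a_2=b$ in the defining condition shows the theory also preserves internality on intersections, so by Proposition \ref{P:propA} every stationary almost $\p$-internal type is good. Fix such a type $p$ with $b=\Cb(p)$, a realization $a\models p$, and a parameter set $C$ with $\stp(a/C)$ almost $\p$-internal; we must show $\stp(b/C)$ is almost $\p$-internal. Goodness applied to $p$ gives $\stp(b/a)$ almost $\p$-internal. The plan is to produce a tuple $a'$ satisfying $\stp(b/a')$ almost $\p$-internal and $\acleq(a)\cap\acleq(a')\subseteq\acleq(C)$; once $a'$ is found, preservation on quotients applied to $\stp(b/a)$ and $\stp(b/a')$ yields $\stp(b/\acleq(a)\cap\acleq(a'))$ almost $\p$-internal, and monotonicity of almost $\p$-internality in the parameter set gives $\stp(b/C)$ almost $\p$-internal.

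The main obstacle is constructing such an $a'$. In the spirit of the forward argument in Proposition \ref{P:propA}, the idea is to choose $a'$ as a $C$-conjugate of $a$ independent from $a$ over $C$ that also realizes $p$ over $b$: concretely, one picks $a'$ realizing the non-forking extension of $p$ to $bC$ with $a'\ind_{bC}a$, using Fact \ref{F:level}(b) and the hypothesis $\stp(a/C)$ almost $\p$-internal to align the almost $\p$-internal parts of $a$ over $C$ and over $b$ so that $\stp(a'/C)=\stp(a/C)$. Then $\stp(a'/b)=p$ delivers $\stp(b/a')$ almost $\p$-internal by goodness of $p$, while $a'\ind_C a$ yields $\acleq(a)\cap\acleq(a')\subseteq\acleq(Ca)\cap\acleq(Ca')=\acleq(C)$ through the standard identity for independent tuples.
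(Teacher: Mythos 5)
Your $(\Leftarrow)$ direction breaks down at the final step. After deriving that $\stp(c)$ and $\stp(b/c)$ are both almost $\p$-internal, you invoke a "standard transitivity argument (via Fact \ref{F:level})" to conclude that $\stp(b)$ is almost $\p$-internal. No such transitivity exists: almost $\p$-internality is \emph{not} preserved under composing two internal levels. What you get from $\stp(c)$ and $\stp(b/c)$ both being almost $\p$-internal is precisely that $\stp(b)$ is $\p$-analysable in two steps, which is weaker, and Fact \ref{F:level} only speaks to analysability. The very additive covers studied in this paper are a counterexample: for $b$ generic in the sort $S$ and $c=\pi(b)\in P$, both $\stp(c)$ and $\stp(b/c)$ are $P$-internal, yet $\stp(b)$ is not almost $P$-internal whenever $S$ is not. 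Passing from such a two-step analysis to almost internality is exactly the content of preservation on quotients, so the argument is circular at this point. The paper's proof avoids this by a finer decomposition: it locates $b_\emptyset$ (the maximal almost $\p$-internal part of $\acleq(b)$), chooses a subtuple $e'$ of the internalizing tuple independent from $b_\emptyset$ over $A_1$, applies specialness to $\stp(b/A_1,e')$ (not to $\stp(b/\Cb(b/a_1))$), and produces an algebraicity $b\in\acleq(\Cb(b/A_1,e')\,b_\emptyset)$ in which \emph{both} factors have almost $\p$-internal type over the right base; the product of two almost internal types over the same base is almost internal, whereas a tower of two is not.

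Your $(\Rightarrow)$ direction is also off track. Constructing $a'$ as the nonforking extension of $p=\stp(a/b)$ to $bC$ gives $a'\ind_{bC}a$, which only yields $\acleq(a)\cap\acleq(a')\subseteq\acleq(bC)$; you need $\subseteq\acleq(C)$, i.e., $a'\ind_C a$, and the two requirements ($a'\models p|_{bC}$ and $a'\ind_C a$) generally conflict. The appeal to Fact \ref{F:level}(b) to "align the almost $\p$-internal parts" is not a proof and, as far as I can see, does not resolve this. More to the point, this whole construction is unnecessary: since $p=\stp(a/b)$ is by hypothesis almost $\p$-internal and $\stp(a/C)$ is almost $\p$-internal, preservation on quotients applies \emph{directly} to the pair $\{\stp(a/b),\stp(a/C)\}$ to give $\tp(a/\acleq(b)\cap\acleq(C))$ almost $\p$-internal; then, since $b=\Cb(p)$ is algebraic over finitely many $b$-conjugates of $a$ (all of which have the same type over $\acleq(b)\cap\acleq(C)$), $\stp(b/\acleq(b)\cap\acleq(C))$ and hence $\stp(b/C)$ is almost $\p$-internal. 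You overlooked that $\stp(a/b)$ itself is one of the two almost $\p$-internal types to which the quotient property can be applied, which is the key simplification in the paper's argument.
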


\begin{proof}
	
		Assume that every stationary almost $\p$-internal type is 
		special. We want 
		to show that 
		\[\tp(b/\acleq(a_1)\cap\acleq(a_2))\]
		is almost $\p$-internal, 
		whenever both $\stp(b/a_1)$  and $\stp(b/a_2)$ are. 
		By Remark \ref{R:CBP_AB}, we may assume that
		\[ \acleq(a_1)\cap\acleq(a_2)=\acleq(\emptyset).\] 
		Note that the type $\stp(b)$ is $\p$-analysable, by Fact 
		\ref{F:level}, so 
		recall that $b_\emptyset$ is the maximal almost $\p$-internal 
		subset of 
		$\acleq(b)$. 
		As in the proof of Proposition \ref{P:propA} there is a set of 
		parameters $A_1$ containing 
		$a_1$ such that $A_1 \ind_{a_1} b$ and $b$ is 
		interalgebraic over $A_1$ 
		with some tuple $e$ of realizations of types (each one based over 
		$A_1$) in $\p$. Choosing a maximal subtuple $e'$   of $e$ with 
		$e' \ind_{A_1} b_\emptyset$, it follows that $b$ is algebraic over 
		$b_\emptyset A_1 e'$ and that
		\[ 
		\acleq(b_\emptyset)\cap\acleq(A_1,e')\subset \acleq(a_1).
		\]
		Hence 
		\begin{equation*}
		\tag{$\star$}	
		\acleq(b)\cap\acleq(A_1,e')\cap\acleq(a_2)=\acleq(\emptyset),
		\end{equation*}
		by Fact \ref{F:level}. Since the almost 
		$\p$-internal type
		$\stp(b / A_1 , e')$ is special, we have that
		\[\stp(\nicefrac{\Cb(b / A_1 , e')}{a_2})\]
		is almost $\p$-internal. Therefore 
		\[\stp(\nicefrac{\Cb(b / A_1 , e')}{\acleq(A_1, 
		e')\cap\acleq(a_2)})\]
		is almost $\p$-internal by Remark \ref{R:CBP_AB}.
		Since
		\[ b \ind_{\Cb(b / A_1 , e'),b_\emptyset} A_1, e' \]
		and $b$ is algebraic over $b_\emptyset A_1 e'$, the tuple $b$ is 
		algebraic over 
		$\Cb(b / A_1 , e') b_\emptyset$. In particular, the type 
		\[ \stp(b/\acleq(A_1, e')\cap\acleq(a_2))  \]
		is almost $\p$-internal and hence so is $\stp(b)$ because of 
		($\star$).
	
	In order to prove the other direction, we want to show that the almost 
	$\p$-internal type $\stp(a/b)$ is special. Fix $C$ some a set of 
	parameters such that $\stp(a/C)$ is almost $\p$-internal. By  
	preservation of internality on quotients, the type 
	\[\stp(a/\acleq(b)\cap\acleq(C)) \]
	is almost $\p$-internal and so is 
	\[\stp(\nicefrac{\Cb(a/b)}{\acleq(b)\cap\acleq(C)}),\] 
	since the canonical base $\Cb(a/b)$ is algebraic over finitely many 
	$b$-conjugates of $a$. 
~\end{proof}
We deduce now the analog of Corollary \ref{C:NamingParametersIntersections}
for preservation of internality on quotients. 
\begin{corollary}\label{C:NamingParametersQuotients}
Preservation of internality on quotients is invariant under naming and 
forgetting parameters.
\end{corollary}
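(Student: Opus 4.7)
The plan is to mimic exactly the argument that yielded Corollary \ref{C:NamingParametersIntersections}. By Proposition \ref{P:propB}, preservation of internality on quotients is equivalent to the statement that every stationary almost $\p$-internal type in $T^{\textnormal{eq}}$ is special. Hence it suffices to show that this latter statement is invariant under naming and forgetting parameters.

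This is already recorded in Remark \ref{R:CBP_AB}(c): by the same argument as in \cite[Remark 2.6]{aP95}, the property that every stationary type is special is preserved under naming parameters. For forgetting, if $T$ is the theory obtained from $T(c)$ by forgetting the constants $c$, then any stationary almost $\p$-internal type $\stp(a/C)$ in $T^{\textnormal{eq}}$ can be viewed (after adjoining $c$ to the parameter set) as a stationary almost $\p$-internal type in $T(c)^{\textnormal{eq}}$, and specialness in $T(c)$ transfers back since the canonical base, algebraic closure and $\p$-internality are all insensitive to whether $c$ is a constant or a parameter.

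Chaining these two equivalences gives the corollary. There is no real obstacle here; the only mild point is keeping track that the family $\p$ does not change under naming or forgetting parameters (it remains the $\emptyset$-invariant family of non-locally modular minimal types of the ambient theory), but this is standard and was already implicitly used in Corollary \ref{C:NamingParametersIntersections}.
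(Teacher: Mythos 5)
Your argument is exactly the one the paper intends: combine Proposition \ref{P:propB} (equivalence of preservation of internality on quotients with every stationary almost $\p$-internal type being special) with Remark \ref{R:CBP_AB}(c) (invariance of specialness under naming/forgetting parameters), mirroring how Corollary \ref{C:NamingParametersIntersections} is derived from Proposition \ref{P:propA}. This is correct and matches the paper's approach.
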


Thanks to the previous notions, we will provide for the sake of 
completeness a 
compact proof in Corollary \ref{C:Zoe} that the CBP already implies the 
UCBP, which essentially 
follows the lines of Chatzidakis's proof \cite[Theorem 2.5]{zC12}: Under 
the assumption of the CBP, the UCBP is equivalent to preservation of 
internality of quotients. Hence, we need only show in Proposition 
\ref{P:CBPQuotients} that the CBP implies the latter (cf. \cite[Lemma 
2.3]{zC12}). For this, we need some auxiliary results. 

Let $\Sigma$ denote the family of all minimal types, that is, of Lascar 
rank one. For a set $A$  of parameters, denote by $A_{\emptyset}^{\Sigma}$ 
be the maximal almost $\Sigma$-internal subset (in some fixed 
enumeration) of $\acleq(A)$.  

\begin{fact}\label{F:Comp}\textup{(}\cite[Lemma 1.10]{zC12}
\textnormal{ \& } \cite[Observation 1.2]{zC12}\textup{)}
	Assume that the types $\stp(e)$ and $\stp(c)$ are almost 
	$\Sigma$-internal.
	\begin{enumerate}[(a)] 
		\item If the tuple $e$ is algebraic over $Ac$ for some parameter 
		set $A$, 
		then $e$ is algebraic over 
		$A_{\emptyset}^{\Sigma} c$.
		\item If the type $\stp(c)$ is $\p$-analysable, then it is almost 
		$\p$-internal.
	\end{enumerate}	
\end{fact}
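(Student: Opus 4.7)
The plan is to prove both parts by exploiting the maximality of $A_{\emptyset}^{\Sigma}$ in $\acleq(A)$, using a foreignness argument for part (a) and the structure of almost $\Sigma$-internal types as concatenations of minimal types for part (b).

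For part (a), I first claim that $A \ind_{A_{\emptyset}^{\Sigma}} d$ whenever $\stp(d/A_{\emptyset}^{\Sigma})$ is almost $\Sigma$-internal. Indeed, the canonical base $\Cb(d/A)$ lies in $\acleq(A)$ and is itself almost $\Sigma$-internal: it is contained in the algebraic closure of a finite Morley sequence of realizations of the almost $\Sigma$-internal type $\stp(d/A)$, and the algebraic closure of an almost $\Sigma$-internal tuple is almost $\Sigma$-internal. Maximality of $A_{\emptyset}^{\Sigma}$ then forces $\Cb(d/A) \subseteq \acleq(A_{\emptyset}^{\Sigma})$, giving the claimed independence. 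Applying this with $d=(e,c)$ --- whose type over $A_{\emptyset}^{\Sigma}$ is almost $\Sigma$-internal since the concatenation of two almost $\Sigma$-internal types is almost $\Sigma$-internal --- I obtain $A \ind_{A_{\emptyset}^{\Sigma}} (e,c)$ and hence $A \ind_{A_{\emptyset}^{\Sigma} c} e$ by transitivity. Combined with $e \in \acleq(Ac)$ this forces $\U(e/A_{\emptyset}^{\Sigma} c) = \U(e/Ac) = 0$, so $e \in \acleq(A_{\emptyset}^{\Sigma} c)$.

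For part (b), almost $\Sigma$-internality of $\stp(c)$ provides some $B \ind c$ and a tuple $e$ of realizations of minimal types over $B$ with $c$ interalgebraic with $e$ over $B$; after choosing $e$ minimally I may assume each $e_i \in \acleq(Bc)$. Since $B \ind c$ and $\stp(c)$ is $\p$-analysable, the non-forking extension $\stp(c/B)$ is also $\p$-analysable, and hence so is every subtype, in particular each $\stp(e_i/B)$. But $\stp(e_i/B)$ is minimal, and a minimal non-algebraic $\p$-analysable type is almost $\p$-internal; for minimal types, almost $\p$-internality is equivalent to non-orthogonality to $\p$, which in turn is equivalent to non-local-modularity. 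Thus each $\stp(e_i/B)$ belongs to $\p$, and the membership $c \in \acleq(B, e_1, \ldots, e_n)$ with $B \ind c$ exhibits $\stp(c)$ as almost $\p$-internal.

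The principal obstacle is marshalling the preservation properties of almost $\Sigma$-internality --- closure under concatenation, restriction, algebraic closure, and canonical bases via Morley sequences --- that underpin (a); and, in (b), the reduction to an interalgebraic tuple of minimal-type realizations inside $\acleq(Bc)$ together with the classical equivalence for minimal types between almost $\p$-internality, non-orthogonality to $\p$, and non-local modularity. These are all standard in finite-rank stability, but each needs to be invoked explicitly for the two arguments to go through cleanly.
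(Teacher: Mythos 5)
The paper records this as a \emph{Fact} with a citation to Chatzidakis \cite{zC12} and gives no proof, so there is no in-paper argument to compare against; your reconstruction must stand on its own.

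Part~(b) is correct and is the standard argument: reduce to an interalgebraic tuple of realizations of minimal types over some $B$ with $B\ind c$, use that $\p$-analysability is preserved under nonforking restriction and under passage to subtuples of $\acleq(Bc)$, and invoke the equivalence for minimal types between $\p$-analysability, almost $\p$-internality, non-orthogonality to $\p$, and non-local modularity.

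Part~(a) has a genuine (though repairable) gap in the auxiliary claim. You assert that $A\ind_{A_\emptyset^\Sigma}d$ whenever $\stp(d/A_\emptyset^\Sigma)$ is almost $\Sigma$-internal, and justify it by placing $\Cb(d/A)$ in the algebraic closure of a Morley sequence in $\stp(d/A)$. But the maximality of $A_\emptyset^\Sigma$ inside $\acleq(A)$ is maximality among subsets whose type \emph{over $\emptyset$} is almost $\Sigma$-internal; to conclude $\Cb(d/A)\subseteq\acleq(A_\emptyset^\Sigma)$ you need $\stp(\Cb(d/A)/\emptyset)$ almost $\Sigma$-internal, hence you need the Morley sequence to be almost $\Sigma$-internal over $\emptyset$, i.e.\ you need $\stp(d/\emptyset)$ almost $\Sigma$-internal. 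Your stated hypothesis --- almost $\Sigma$-internality of $\stp(d/A_\emptyset^\Sigma)$ --- is strictly weaker, and combined with almost $\Sigma$-internality of $\stp(A_\emptyset^\Sigma/\emptyset)$ it yields only two-step $\Sigma$-analysability over $\emptyset$, not almost $\Sigma$-internality. The fix is to state the claim with the hypothesis ``$\stp(d)$ is almost $\Sigma$-internal''; this is exactly what holds in your application, since $d=(e,c)$ with $\stp(e)$ and $\stp(c)$ almost $\Sigma$-internal over $\emptyset$ and almost $\Sigma$-internality is preserved under concatenation. With that correction, the rest of part~(a) --- passing from $A\ind_{A_\emptyset^\Sigma}(e,c)$ to $A\ind_{A_\emptyset^\Sigma c}e$, then reading off $\U(e/A_\emptyset^\Sigma c)=\U(e/Ac)=0$ --- is fine.
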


\begin{lemma}\label{L:CompCBP}
	Assume that the theory $T$ has the CBP and let $e$ be a tuple which is 
	algebraic over $AB$ with
	$\acleq(A)\cap\acleq(B)=\acleq(\emptyset)$. If the type $\stp(e)$ is 
	almost $\Sigma$-internal, then $e$ is  algebraic over 
	$A_{\emptyset}^{\Sigma} B$.
\end{lemma}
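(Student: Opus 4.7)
The strategy is to set $c := \Cb(Ae/B)$, show that $\stp(c)$ is almost $\Sigma$-internal, and then invoke Fact \ref{F:Comp}(a). By the defining property of the canonical base, $c \in \dcleq(B) \subseteq \acleq(B)$ and $Ae \ind_c B$; monotonicity yields $e \ind_{Ac} B$, whence
\[
\U(e/Ac) = \U(e/AcB) = \U(e/AB) = 0,
\]
so $e \in \acleq(Ac)$. Moreover, $\acleq(A) \cap \acleq(c) \subseteq \acleq(A) \cap \acleq(B) = \acleq(\emptyset)$.

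Applying the CBP to $p := \stp(Ae/B)$, realized by $(A,e)$, gives that $\stp(c/Ae)$ is almost $\p$-internal, and a fortiori almost $\Sigma$-internal. Composing this with $\stp(e/A)$, which is almost $\Sigma$-internal since $\stp(e)$ is, yields $\stp(ce/A)$, and hence $\stp(c/A)$, almost $\Sigma$-internal.

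The crux of the proof is then to upgrade $\stp(c/A)$ being almost $\Sigma$-internal to $\stp(c)$ being almost $\Sigma$-internal over $\emptyset$, leveraging the algebraic disjointness $\acleq(A) \cap \acleq(c) = \acleq(\emptyset)$. Once this is achieved, Fact \ref{F:Comp}(a), applied to the tuple $e \in \acleq(Ac)$ with both $\stp(e)$ and $\stp(c)$ almost $\Sigma$-internal, yields $e \in \acleq(A_{\emptyset}^{\Sigma} c) \subseteq \acleq(A_{\emptyset}^{\Sigma} B)$, since $c \in \acleq(B)$, completing the proof. The main obstacle is precisely this promotion step: the almost-$\Sigma$-internal witnesses for $\stp(c/A)$ are naturally parameterized by $A$, so reanchoring them over $\emptyset$ demands delicate use of the disjointness condition. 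A promising route is to pass to a non-forking extension $A^{*} \equiv A$ with $A^{*} \ind c$ (available by extending $\tp(A)$ to $c$) and transfer the internality across the induced automorphism; alternatively, one may iterate the CBP via the secondary canonical base $d := \Cb(c/A) \in \dcleq(A)$, exploiting that $d$ is algebraic over finitely many realizations of $\stp(c/A)$ and bootstrapping the internality of $c$ via a Morley sequence.
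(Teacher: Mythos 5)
Your setup is clean and your preliminary reductions are correct: taking $c:=\Cb(Ae/B)$, you rightly obtain $c\in\dcleq(B)$, $e\in\acleq(Ac)$, $\acleq(A)\cap\acleq(c)=\acleq(\emptyset)$, and (by the CBP) that $\stp(c/Ae)$ is almost $\p$-internal. You also correctly identify that, \emph{if} one could show $\stp(c)$ to be almost $\Sigma$-internal over $\emptyset$, then Fact~\ref{F:Comp}(a) would finish the argument. However, the two steps needed to get there are both gaps, and they are not merely technical loose ends.

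First, the ``composition'' step is false as stated. From $\stp(c/Ae)$ and $\stp(e/A)$ both almost $\Sigma$-internal you cannot conclude $\stp(ce/A)$ almost $\Sigma$-internal: this is exactly the distinction between internality and two-step analysability, and it fails even under the CBP. Concretely, in $\mathbb{Z}_4^\omega$ (one-based, hence satisfying the CBP), with $a$ generic and $\bar a=a+2G$, both $\tp(\bar a)$ and $\tp(a/\bar a)$ are almost $\Sigma$-internal, yet $\tp(a)=\tp(a,\bar a)$ is not almost $\Sigma$-internal, since $a\in\acl(C,\bar\sigma)$ with $C\ind a$ and $\bar\sigma$ realizing minimal types over $C$ would force $2a\in\dcl(C)\cap\acl(a)=\{0\}$. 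So $\stp(c/A)$ almost $\Sigma$-internal does not follow.

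Second, and as you yourself flag, the ``upgrade'' from $\stp(c/A)$ almost $\Sigma$-internal to $\stp(c)$ almost $\Sigma$-internal using $\acleq(A)\cap\acleq(c)=\acleq(\emptyset)$ is precisely an instance of preservation of internality on intersections (for $\Sigma$). The CBP gives this kind of conclusion for $\p$ via Remark~\ref{R:CCBP} only when $c$ is itself a canonical base of a type over the other tuple, which need not hold here; and the general preservation statement is exactly what Lemma~\ref{L:CompCBP} and Proposition~\ref{P:CBPQuotients} are jointly being used to establish, so invoking it would be circular. Neither of your sketched alternatives (passing to a nonforking copy $A^*\ind c$, or iterating via $\Cb(c/A)$) closes this gap: the internality witnesses for $\stp(c/A)$ live over $A$, and moving to $A^*$ simply changes the base without detaching $c$ from it, while $\Cb(c/A)\in\acleq(A)$ gives information about $\stp(\Cb(c/A)/c)$, not about $\stp(c)$.

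The paper avoids both problems by a different reduction: since $\stp(e)$ is almost $\Sigma$-internal, one names a set $D\ind e,A,B$ over which $e$ is interalgebraic with a tuple of realizations of minimal types, reducing to the case where $e$ is a single element of Lascar rank one. Then the dichotomy $e\nind_B A_\emptyset^\Sigma$ (done immediately) versus $e\ind_B A_\emptyset^\Sigma$ applies; in the second case, one uses the $\Sigma$-variant of Fact~\ref{F:level}(b) to get $\acleq(A)\cap\acleq(Be)=\acleq(\emptyset)$, and then a single application of the CBP via Remark~\ref{R:CCBP} to $\Cb(Be/A)$ puts that canonical base inside $A_\emptyset^\Sigma$, finishing the proof. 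The rank-one reduction is the key idea that makes the argument work without ever having to compose internalities or promote them across a base change.
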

\begin{proof}
	Choose a set of parameters $D$ with $D \ind e,A,B$ such that $e$ is 
	interalgebraic over $D$ with a tuple of realizations of types (each 
	one based over $D$) in $\Sigma$.
	Since 
	\[ e \ind_{A_{\emptyset}^{\Sigma} B} D  \ \ \textnormal{ and } \ \ 
	\acleq(A,D)\cap\acleq(B,D)=\acleq(D),\]
	we may assume, after naming $D$, that $e$ is a single  
	element of Lascar rank one. If 
	\[ e \nind_{B} A_{\emptyset}^{\Sigma}, \]
	we are done. Otherwise
	\[ e \ind_{B} A_{\emptyset}^{\Sigma}, \]
	so \[ 
	\acleq(A_{\emptyset}^{\Sigma})\cap\acleq(B,e)=\acleq(A_{\emptyset}^{\Sigma})\cap\acleq(B)=
	 \acleq(\emptyset).\]
	The variant of Fact \ref{F:level} (b) with respect to $\Sigma$ yields
	\[\acleq(A)\cap\acleq(B,e)=\acleq(\emptyset).\]
	Now the CBP and Remark \ref{R:CCBP} imply that the type 
	$\stp(\Cb(B,e/A))$ 
	is almost 
	$\p$-internal, hence almost $\Sigma$-internal. Therefore, the 
	canonical base $\Cb(B,e/A)$ is contained 
	in $A_{\emptyset}^{\Sigma}$. Since $e$ 
	is algebraic over $\Cb(B,e/A) B$, we conclude that $e$ is 
	algebraic 
	over  
	$A_{\emptyset}^{\Sigma} B$, as desired.
~\end{proof}

We have now the necessary ingredients to show that every complete stable 
theory of 
finite rank with the CBP preserves internality on quotients.

\begin{proposition}\label{P:CBPQuotients}
	If the theory $T$ has the CBP, then it preserves internality on 
	quotients.
\end{proposition}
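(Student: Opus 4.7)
The plan is to prove the proposition by induction on the Lascar rank $\U(b)$, exploiting preservation of internality on intersections (already available under the CBP via Proposition~\ref{P:propA}) together with the standard composition of almost $\p$-internality.

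First, I would use Remark~\ref{R:CBP_AB}(c) to reduce to the following: whenever $\stp(b/a_1)$ and $\stp(b/a_2)$ are almost $\p$-internal and $\acleq(a_1)\cap\acleq(a_2)=\acleq(\emptyset)$, the type $\stp(b)$ is almost $\p$-internal. The base case $\U(b)=0$ is immediate. For the induction step, I would apply preservation of internality on intersections to each $\stp(b/a_i)$, obtaining that $\stp(b/c_i)$ is almost $\p$-internal, where $c_i:=\acleq(a_i)\cap\acleq(b)$. Clearly $c_i\in\acleq(b)$, and $\acleq(c_1)\cap\acleq(c_2)\subseteq\acleq(a_1)\cap\acleq(a_2)=\acleq(\emptyset)$.

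Next, I would case-split on the $\U$-ranks of $c_1$ and $c_2$. If some $c_i\in\acleq(\emptyset)$, then $\stp(b)=\stp(b/c_i)$ is already almost $\p$-internal. If $\U(c_1)=\U(c_2)=\U(b)$, then the $\U$-rank inequality in finite-rank stable theories forces $b$ to be interalgebraic with each $c_i\in\acleq(a_i)$, whence $b\in\acleq(a_1)\cap\acleq(a_2)=\acleq(\emptyset)$ is algebraic. Otherwise, after relabeling, $0<\U(c_1)<\U(b)$, and the tuple $c_1$ satisfies the hypotheses of the proposition with the same parameters $a_1,a_2$: namely $\stp(c_1/a_1)$ is algebraic since $c_1\in\acleq(a_1)$, and $\stp(c_1/a_2)$ is almost $\p$-internal because $c_1\in\acleq(b)$ and $\stp(b/a_2)$ is. The induction hypothesis yields $\stp(c_1)$ almost $\p$-internal, and composing with the almost $\p$-internal type $\stp(b/c_1)$ gives $\stp(b)$ almost $\p$-internal.

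The hard part will be the composition step, that is, verifying that $\stp(c)$ and $\stp(b/c)$ almost $\p$-internal together force $\stp(b)$ almost $\p$-internal. This is standard in stable theories of finite rank, but must be executed with care: one takes an independent conjugate of the witness set for $\stp(c)$ over $c$ relative to $b$ and then concatenates it with the witness for $\stp(b/c)$, and one must check that the resulting combined parameter set remains independent from $b$ over $\emptyset$ and that the concatenated $\p$-tuple is genuinely $\p$-internal over the enlarged base.
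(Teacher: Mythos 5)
Your ``composition step'' is not a gap to be executed with care --- it is false, and its failure is the entire reason this proposition is non-trivial. Almost $\p$-internality does not compose: from $\stp(c)$ and $\stp(b/c)$ both almost $\p$-internal one only obtains that $\stp(b)$ is $\p$-analysable in two steps, which is strictly weaker. The structures in this very paper witness the failure concretely: in the additive cover $\mathcal{M}_1$ of Example \ref{E:covers}, take $b$ generic in $S$ and $c=\pi(b)$; then $\stp(c)$ is trivially $P$-internal and $\stp(b/c)$ is $P$-internal (the fiber $\pi^{-1}(c)$ is a $P$-torsor under $\star$), yet $\stp(b)$ is not almost $P$-internal --- this is exactly Remark \ref{R:genCov} and \cite{HPP13}. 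Tracing through your concatenation-of-witnesses sketch makes the obstruction visible: the witness set $B_2$ for $\stp(b/c)$ is only independent from $b$ over $c$, and since $c\in\acleq(b)\setminus\acleq(\emptyset)$ in the cases where you invoke composition, the combined set obtained by adjoining a witness $B_1$ for $\stp(c)$ need not be (and in the example above cannot be) independent from $b$ over $\emptyset$.

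Consequently the induction on $\U(b)$ collapses at its final step. Your setup --- reducing via Remark \ref{R:CBP_AB}(c), invoking preservation of internality on intersections (available from the CBP via Proposition \ref{P:propA}) for each $a_i$, the additivity-of-$\U$-rank case analysis on $c_1,c_2\subseteq\acleq(b)$ --- is all reasonable, but there is no valid replacement for the composition. The paper circumvents it by never composing almost internality along a chain: it passes to the family $\Sigma$ of \emph{all} minimal types, proves Lemma \ref{L:CompCBP} (a genuinely CBP-dependent statement allowing one to replace a parameter set by its maximal almost $\Sigma$-internal part), uses two independently chosen witness sets for $\stp(b/a_1)$ and $\stp(b/a_2)$ simultaneously to deduce that $\stp(b)$ is almost $\Sigma$-internal, and only then converts back to almost $\p$-internality using Fact \ref{F:level}(a) together with Fact \ref{F:Comp}(b). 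You would need to find and prove an analogue of Lemma \ref{L:CompCBP} to make any version of your strategy go through.
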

\begin{proof}
	We want to show that 
	\[\tp(b/\acleq(a_1)\cap\acleq(a_2))\]
	is almost $\p$-internal, whenever both $\stp(b/a_1)$  and 
	$\stp(b/a_2)$ are. Since the CBP is preserved under naming parameters, 
	we may assume that 
	\[\acleq(a_1)\cap\acleq(a_2)=\acleq(\emptyset).\]
	Choose sets of parameters $A_1$ containing $a_1$ and $A_2$ containing 
	$a_2$ 
	with
	\[  A_1 \ind_{a_1} b, a_2 \ \  \textnormal{ and  } \ \ A_2 \ind_{a_2} 
	b, A_1 \]
	such that $b$ is algebraic over both $A_1 e_1$ and $A_2 e_2$, where 
	$e_1$ and $e_2$ are tuples of realizations of types (each one based 
	over $A_1$, resp. $A_2$) in $\p$. Since
	\[\acleq(A_1)\cap\acleq(A_2)=\acleq(a_1)\cap\acleq(a_2)=\acleq(\emptyset),\]
	the CBP and Remark \ref{R:CCBP} implies that $\stp(\Cb(A_1/A_2))$ is 
	almost $\p$-internal, so 
	\[A_1 \ind_{(A_2)_{\emptyset}^{\Sigma}} A_2.\] 
	Choose now a maximal subtuple 
	$e_{1}'$ of $e_1$ which is independent from $A_2$ over $A_1$, so $e_1$ 
	is algebraic over $A_1 e_{1}' A_2$ and 
	\[\acleq(A_1 , 
	e_{1}')\cap\acleq(A_2)=\acleq(A_1)\cap\acleq(A_2)=\acleq(\emptyset).\]
	Now, let 
	$e_{2}'$ be a maximal subtuple of $e_2$ with
	\[ e_{2}' \ind_{A_2} A_1 , e_{1}'. \]
	We deduce that 
	\[ A_1 , e_{1}' \ind_{(A_2)_{\emptyset}^{\Sigma}} A_2 , e_{2}' \]
	and $e_2$ is algebraic over $A_1 e_{1}' e_{2}' A_2$. 
	Moreover 
	\[\acleq(A_1 , e_{1}')\cap\acleq(A_2, e_{2}')\subset\acleq(A_1, 
	e_{1}')\cap\acleq(A_2)=\acleq(\emptyset).\]
	By Lemma \ref{L:CompCBP}, 
	we get that $e_{1}$ is algebraic over 
	$(A_1,e_{1}')_{\emptyset}^{\Sigma} A_2$ and that 
	$e_{2}$ is algebraic over $A_1 e_{1}' (e_{2}' 
	A_2)_{\emptyset}^{\Sigma}$.
	It follows from Fact \ref{F:Comp} (a) that
	\[ (A_1,e_{1}')_{\emptyset}^{\Sigma} = (A_1)_{\emptyset}^{\Sigma} 
	e_{1}'\ \  \text{  and } \ \  (e_{2}',A_2)_{\emptyset}^{\Sigma} = 
	e_{2}' 
	(A_2)_{\emptyset}^{\Sigma}  .\] 
    We deduce  that $e_{1}$ is algebraic over $(A_1)_{\emptyset}^{\Sigma} 
	e_{1}' 
	A_2$ and $e_{2}$ is algebraic over $A_1 e_{1}' 
	e_{2}'(A_2)_{\emptyset}^{\Sigma}$. Therefore
	\[ A_1 , e_{1} \ind_{(A_1)_{\emptyset}^{\Sigma}, 
	(A_2)_{\emptyset}^{\Sigma}, e_{1}, e_{2}} 
	A_2 , e_{2}. \]
	Hence $b$ is algebraic over $(A_1)_{\emptyset}^{\Sigma}, 
	(A_2)_{\emptyset}^{\Sigma}, e_{1}, e_{2}$, so the type $\stp(b)$ is 
	almost 
	$\Sigma$-internal. Since, by Fact \ref{F:level}, the type $\stp(b)$ is 
	$\p$-analysable, we conclude by Fact \ref{F:Comp} (b) that $\stp(b)$ 
	is almost $\p$-internal, 
	as desired.
~\end{proof}

\begin{remark}\label{R:IndepQuotients}
It is easy to see that a weakening of preservation of internality 
on quotients holds in every complete stable theory of finite rank, when 
the quotients are independent: If the types $\stp(b/a_1)$ and 
	$\stp(b/a_2)$ are almost $\p$-internal 
	and $a_1 \ind a_2$, then the type $\stp(b)$ is almost $\p$-internal.
\end{remark}

For completeness, we now restate Chatzidakis's proof 
\cite[Theorem 2.5]{zC12} 
that the CBP implies the UCBP using the aforementioned terminology.  

\begin{corollary}\label{C:Zoe}
The CBP and UCBP are equivalent properties for theories of finite rank. 
\end{corollary}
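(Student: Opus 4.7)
The implication $\text{UCBP} \Rightarrow \text{CBP}$ is immediate from Remark \ref{R:CBP_AB}(a), since every special stationary type is already good. The plan for the reverse direction is to combine the machinery developed in Propositions \ref{P:CBPQuotients} and \ref{P:propB} with a direct application of the CBP itself.

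Assuming the CBP, Proposition \ref{P:CBPQuotients} yields preservation of internality on quotients, and Proposition \ref{P:propB} then gives that every stationary almost $\p$-internal type in $T^{\textnormal{eq}}$ is special. What remains is to upgrade this to the statement that every stationary type in $T^{\textnormal{eq}}$ is special, which is exactly the UCBP.

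For the upgrade step, I would fix an arbitrary stationary type $p$, a realization $a$ of $p$, and set $b=\Cb(p)$. Given any parameter set $C$ with $\stp(a/C)$ almost $\p$-internal, the goal is to deduce that $\stp(b/C)$ is almost $\p$-internal. Since $p$ is good by the CBP, $\stp(b/a)$ is almost $\p$-internal, and hence, after adjoining parameters, so is $\stp(b/Ca)$. Invoking the standard closure of the class of almost $\p$-internal types under chain compositions --- namely, that if $\stp(x/D)$ and $\stp(y/Dx)$ are both almost $\p$-internal then so is $\stp(xy/D)$ --- it follows that $\stp(ab/C)$ is almost $\p$-internal, whence its restriction $\stp(b/C)$ is too.

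I expect the only step requiring genuine care to be the final invocation of the closure property of almost $\p$-internal types under chaining; this is a standard item in the geometric-stability toolkit, but the verification needs a careful choice of witness sets (extending the witness for $\stp(a/C)$ to a model and then appealing to preservation of almost $\p$-internality under non-forking extensions). Everything else is a direct application of the already-developed framework.
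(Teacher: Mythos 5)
Your treatment of the direction $\text{UCBP}\Rightarrow\text{CBP}$ and your use of Propositions \ref{P:CBPQuotients} and \ref{P:propB} to conclude that every stationary \emph{almost $\p$-internal} type is special are both correct and are exactly the ingredients the paper uses. The fatal error is in the final ``upgrade'' step. The closure property you invoke --- that if $\stp(x/D)$ and $\stp(y/Dx)$ are both almost $\p$-internal then $\stp(xy/D)$ is almost $\p$-internal --- is \emph{false}: it is precisely the definition of $\p$-analysability in two steps, and $\p$-analysable types need not be almost $\p$-internal. The additive covers of Section~\ref{S:AddCovers} with $S$ not almost $P$-internal are exact counterexamples: take $\alpha$ generic in $P$ and $a\in\pi^{-1}(\alpha)$; then $\stp(\alpha)$ and $\stp(a/\alpha)$ are each almost $P$-internal, but $\stp(a\alpha)$, which is interdefinable with $\stp(a)$, is not. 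Moreover, this failure persists under the CBP, since by Proposition~\ref{P:CBPpureCover} the cover $\mathcal{M}_0$ has the CBP while $S$ remains not almost $P$-internal. So the chain-composition step is not a ``standard item requiring careful choice of witness sets''; it simply does not hold.

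The missing idea is the reduction the paper performs via the identity $\Cb(a/b)=\Cb(\Cb(b/a)/b)$. To show $\stp(a/b)$ is special, one replaces $a$ by $\Cb(b/a)\in\acleq(a)$: this changes neither the hypothesis in the definition of special (if $\stp(a/C)$ is almost $\p$-internal, so is $\stp(\Cb(b/a)/C)$ since $\Cb(b/a)$ is algebraic over $a$) nor the conclusion (the canonical bases over $b$ coincide). After this replacement, $a=\Cb(b/a)$, so the CBP applied to $\stp(b/a)$ gives that $\stp(a/b)$ itself is almost $\p$-internal, and only then does Proposition~\ref{P:propB} apply. Without that reduction, the step from ``almost $\p$-internal stationary types are special'' to ``all stationary types are special'' is a real obstacle, and the direct chaining you attempted cannot bridge it.
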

\begin{proof}
The UCBP  clearly implies the CBP, similar to the remark that every 
special type is good. 	
	
We assume now that the theory has the CBP. We need to show that every 
type 
$\stp(a/b)$ is special. Since
\[ \Cb(a/b)=\Cb(\nicefrac{\Cb(b/a)}{b}),\]
we may assume that $a$ is the 
 canonical base $\Cb(b/a)$. In particular, the type $\stp(a/b)$ is almost 
 $\p$-internal, by the CBP. Now, the Propositions \ref{P:CBPQuotients} and 
 \ref{P:propB} yield that the type $\stp(a/b)$ is special, as desired.
~\end{proof}

The equivalence of the previous corollary  motivates the following 
question, after localizing to almost $\p$-internal types.

\begin{question}\label{Q:1}
	Are  preservation of internality on intersections and on quotients 
	equivalent 
	properties for theories of finite rank? 
\end{question}

At the moment of writing, we do not know whether the previous question has 
a positive answer. Note that providing a structure which answers 
negatively the above question means in particular a new theory of finite 
rank without the CBP, since we will see in Section \ref{S:Imag} that the 
so far only known counterexample to the CBP given in \cite{HPP13} does not 
preserve 
internality on intersections.

It was remarked in \cite[Lemma 2.11]{BMPW12} that the CBP holds whenever 
it holds for stationary real types, or equivalently, for real types over 
models. A natural question is whether the same holds for the above 
properties of preservation of internality.

\begin{question}\label{Q:2}
	Does a theory of finite rank preserve internality on intersections, 
	resp. on quotients, if every stationary real almost $\p$-internal type 
	is good, resp. special?
\end{question}
Additive covers of the algebraically closed field $\mathbb{C}$, which will 
be introduced in the following section, will provide a negative answer 
(see Corollary \ref{C:AB_imag}) to Question \ref{Q:2}. 
	
\section{Additive Covers}\label{S:AddCovers}

The only known example so far of a stable theory of finite rank without 
the CBP appeared in \cite{HPP13}. We will consider this example from the 
perspective of additive covers of the algebraically closed field 
$\mathbb{C}$. We start this section with a couple of definitions. 

Following the terminology of Hrushovski ~\cite{eH91}, Ahlbrandt and 
Ziegler ~\cite{AZ91}, and 
Hodges and Pillay ~\cite{HP94}, we say that $M$ is a \textit{cover} of $N$ 
if the following three conditions hold:
\begin{itemize}
	\item The set  $N$ is a stably embedded $\emptyset$-definable subset 
	of $M$.
	\item There is a surjective $\emptyset$-definable map $\pi:M\backslash 
	N\rightarrow N$. 
	\item There is a family of groups $(G_a)_{a\in N}$ definable in 
	$N^{\text{eq}}$ without parameters such that $G_a$ acts definably and 
	regularly on the fiber $\pi^{-1}(a)$. 
	\end{itemize}
For the purpose of this article, we will concentrate on particular covers 
of the algebraically closed 
field $\mathbb{C}$, and hence provide a  definition adapted to this 
context. From now on, given the canonical projection of the sort 
$S=\mathbb{C}\times\mathbb{C}$ onto the first coordinate $P=\mathbb{C}$, 
we will denote the elements of $P$ with the greek letters 
$\alpha$, $\beta$, ect., while the elements of $S$ will be seen
accordingly as 
pairs $(\alpha,a')$ and so on.
\begin{definition}\label{D:AdditiveCover}
An \emph{additive cover} of the algebraically closed 
field $\mathbb{C}$ is a structure 
$\mathcal{M}=(P,S,\pi,\star,\ldots)$ with the distinguished 
sorts $P=\mathbb{C}$ and $S=\mathbb{C}\times\mathbb{C}$ such that the 
following conditions to hold:
\begin{itemize}
	\item The structure $\mathcal{M}$ is a reduct of 
	$(\mathbb{C},\mathbb{C}\times\mathbb{C})$ with 
	the full field structure on the sort $P$. 
	\item The projection $\pi$ maps $S$ onto $P$. 
	\item There is an action $\star$ of $P$ on $S$ given 
	by $\alpha\star(\beta,b')=(\beta,b'+\alpha)$.
\end{itemize}
Moreover, the map
\[	\begin{array}{rccc}
	\oplus: & S\times S & \rightarrow & S \\[2mm]
	& \big((\alpha,a'),(\beta,b')\big)&\mapsto& (\alpha+\beta,a'+b')
	\end{array}\]
is definable in $\mathcal{M}$ without parameters.
\end{definition}

\begin{example}~\label{E:covers}
	
	\begin{itemize}
		\item The additive cover
		$\mathcal{M}_{0}=(P,S,\pi,\star,\oplus)$ with no additional 
		structure.
		\item The additive cover 
		$\mathcal{M}_{1}=(P,S,\pi,\star,\oplus,\otimes)$ 
		with the product
    \[	\begin{array}{rccc}
	\otimes: & S\times S & \rightarrow & S \\[2mm]
	& \big((\alpha,a'),(\beta,b')\big)&\mapsto& (\alpha\beta,\alpha 
	b'+\beta 
		a').
	\end{array}\]
	Note that $\mathcal{M}_{1}$ 
		is a commutative ring with multiplicative neutral element $(1,0)$. 
		The zero-divisors are exactly the elements $a$ in $S$ with 
		$\pi(a)=0$, that is, the pairs $a=(0, a')$.
	\end{itemize}

Given an additive cover $\mathcal{M}$, there is a 
canonical 
embedding
\[\begin{array}{rll}
\Aut(\mathcal{M}/P)& \hookrightarrow & \{F:\mathbb{C}\rightarrow\mathbb{C} 
\text{ 
	additive} 
\}\\[1mm]
\sigma &\mapsto & F_\sigma
\end{array}\]
uniquely determined by the identity
$\sigma(x)=F_\sigma(\pi(x))\star x$. 

For the additive cover $\mathcal M_0$ 
of Example \ref{E:covers}, the above embedding defines a bijection
\[\Aut(\mathcal{M}_{0}/P)\leftrightarrow\{F:\mathbb{C}\rightarrow\mathbb{C}
 \text{ additive}\}\] 
and a straight-forward calculation yields that 
		\[\Aut(\mathcal{M}_{1}/P)\leftrightarrow\{F:\mathbb{C}\rightarrow\mathbb{C}
		 \text{ 
		derivation}\}.\] Indeed, for elements $a=(\alpha,a')$ and 
		$b=(\beta,b')$ in $S$, we have
	\begin{align*}
	\sigma(a\otimes b)&=F_{\sigma}(\alpha\beta)\star (a\otimes b) 
	\textnormal{ and } \\
	\sigma(a)\otimes\sigma(b)&=\big(F_{\sigma}(\alpha)\star a\big)\otimes 
	\big(F_{\sigma}(\beta)\star b\big)=(\alpha\beta,\alpha 
	(b'+F_{\sigma}(\beta))+\beta (a'+F_{\sigma}(\alpha)))\\&=\big(\alpha 
	F_{\sigma}(\beta)+\beta F_{\sigma}(\alpha)\big)\star (a\otimes b).
	\end{align*}
\end{example}

\begin{remark}\label{R:genCov}
Every additive cover $\mathcal{M}$ is a saturated
uncountably categorical structure, where $P$ is the unique strongly 
minimal set up to non-orthogonality. The sort $S$ has Morley 
rank two and degree one, and is $P$-analysable in two steps. Moreover, 
each fiber 
$\pi^{-1}(\alpha)$ is strongly minimal.

Therefore, for additive covers, almost $\p$-internality in 
the CBP is 
equivalent to almost internality to $P$.  If $S$ is almost $P$-internal, 
then 
the CBP trivially holds. 
\end{remark}

\begin{remark}\label{R: Cex}
The counterexample to the CBP given in \cite{HPP13} is an 
additive cover, including for every irreducible variety $V$ defined over 
$\mathbb{Q}^{\textnormal{alg}}$ a predicate in the sort $S$ for the 
tangent bundle of $V$. It is easy to see that this structure has the 
same definable sets as the additive cover $\mathcal{M}_1$ given in Example 
\ref{E:covers}, since every polynomial expression over 
$\mathbb{Q}^{\textnormal{alg}}$ in $P$ lifts to a polynomial equation in 
$S$, using the ring operations $\oplus$ and $\otimes$. 

A key ingredient in the proof that the sort $S$ in the above counterexample
is not almost 
$P$-internal \cite[Corollary 3.3]{HPP13} is that every derivation on 
the algebraically closed field $\mathbb{C}$ induces an automorphism  
 in $\Aut(\mathcal{M}_1 / P)$. 
\end{remark}

For the following sections, we will need some auxiliary lemmas on the 
structure of  additive covers, and particularly those where 
the sort $S$ is not almost $P$-internal. For the sake of completeness, 
note that there are additive covers, besides the full structure, where the 
sort $S$ is $P$-internal: Consider the additive cover $\mathcal{M}$ with 
the following binary relation $R$ on $S\times S$
\[ R((\alpha,a'),(\beta,b')) \iff \big(\alpha\notin\mathbb{Q}\ \  \& \ \  
\beta\notin\mathbb{Q} \ \  \& \ \ a'=b' \big).
 \] 
It is easy to verify that $\Aut(\mathcal{M}/P)=(\mathbb{C},+)$ and the 
sort $S$ is $P$-algebraic (actually $P$-definable), after 
naming any element in the fiber $\pi^{-1}(1)$.

The following notion will be helpful in the following chapter. 

\begin{definition}\label{D:mean}
Given elements $a_1=(\alpha, a_1'),\ldots, a_n=(\alpha, a'_n)$ of $S$ all 
in the same fiber $\pi^{-1}(\alpha)$, their \emph{average} is the element 
\[\Big(\alpha, \frac{a'_1+\ldots+a'_n}{n}\Big).\]
\end{definition}

\begin{lemma}\label{L:mean}
	Given a non-empty finite set $A$ of elements of $S$, all lying in the 
	same fiber, every
	automorphism $\sigma$ of the additive cover maps the average of $A$ to 
	the average of $\sigma[A]$. In particular, the average of $A$ is 
	definable 
	over $A$.
\end{lemma}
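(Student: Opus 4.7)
The plan is to exploit that the fiber $\pi^{-1}(\alpha)$ is a principal homogeneous space for $(\mathbb{C},+)$ via $\star$, combined with the observation that the restriction of any automorphism of $\mathcal{M}$ to the sort $P$ is necessarily a field automorphism of $\mathbb{C}$, hence additive. Write the elements of $A$ as $a_i = (\alpha, a'_i)$ for $i=1,\dots,n$, and let $a^{\star}=(\alpha,\bar a')$ with $\bar a' = \tfrac{1}{n}\sum_i a'_i$. The key reformulation is to record the relative positions $c_i := a'_i - \bar a' \in \mathbb{C}$, which satisfy $a_i = c_i \star a^{\star}$ and, crucially, $\sum_i c_i = 0$.

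Given $\sigma\in\Aut(\mathcal{M})$, let $\tau = \sigma|_{P}$, a field automorphism of $\mathbb{C}$. Since the action $\star$ is part of the language of the additive cover, $\sigma$ is $\tau$-equivariant: $\sigma(c\star x) = \tau(c)\star \sigma(x)$ for all $c\in P$ and $x\in S$. Applying this to each identity $a_i = c_i \star a^{\star}$ yields $\sigma(a_i) = \tau(c_i) \star \sigma(a^{\star})$. In particular, all $\sigma(a_i)$ sit in the same fiber $\pi^{-1}(\tau(\alpha))$ as $\sigma(a^{\star})$, and their displacements from $\sigma(a^{\star})$ are exactly the scalars $\tau(c_i)$.

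The point is then that $\tau$, being a field automorphism of $\mathbb{C}$, is additive, so $\sum_i \tau(c_i) = \tau\bigl(\sum_i c_i\bigr) = \tau(0) = 0$. This shows precisely that $\sigma(a^{\star})$ is the average of $\sigma[A]$, proving the first assertion. The second assertion is then immediate: if $\sigma\in\Aut(\mathcal{M}/A)$, then $\sigma[A]=A$, hence $\sigma$ fixes $a^{\star}$, so $a^{\star}\in\dcl(A)$. The only thing to be wary of is that the argument genuinely uses that $P$ carries the full field structure — this ensures $\tau$ is additive, and it is what makes the calculation go through uniformly in every additive cover.
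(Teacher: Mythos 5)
Your argument is correct, and it takes a genuinely different (and arguably cleaner) route from the paper. The paper proceeds by induction on $|A|$: given $a\in A$ with $b=\sigma(a)$, it uses the inductive hypothesis that the average $d_1$ of $A\setminus\{a\}$ maps to the average $d_2$ of $\sigma[A]\setminus\{b\}$, then expresses the average of $A$ as $\tfrac{\varepsilon_1}{n}\star d_1$ where $\varepsilon_1\star d_1=a$, and concludes via $\sigma(\varepsilon_1/n)=\varepsilon_2/n$; the only arithmetic fact about $\tau=\sigma|_P$ this invokes is that it fixes the rationals. Your proof instead records the displacements $c_i=a_i'-\bar a'$ of the $a_i$ from the average, observes $\sum_i c_i=0$, and pushes through $\tau$-equivariance of $\star$ together with additivity of $\tau$ to get $\sum_i\tau(c_i)=0$ in one step, with no induction. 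What your version buys is directness: the fact that the average is characterized by "displacements summing to zero" is exactly the invariant preserved, and you isolate it explicitly. The paper's induction hides this and trades it for repeated bookkeeping with $\varepsilon_1,\varepsilon_2$. One small stylistic caveat: your closing remark that the argument "genuinely uses that $P$ carries the full field structure" is slightly overstated — all you need from $\tau$ is additivity together with $\tau(0)=0$ (just as the paper only needs that $\tau$ fixes $1/n$), both of which already follow from $\tau$ being an automorphism of the pure field $(\mathbb{C},+,\cdot)$, which is built into the definition of an additive cover. So this is not an extra hypothesis being smuggled in; both proofs rely on it in the same way.
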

\begin{proof}
    We proceed by induction on the size $n$ of the non-empty set $A$. For 
    $n=1$, there is nothing to prove. Assume $A$ contains at least two 
    elements, and choose  
	 $a$ some element of $A$. Set $b=\sigma(a)$. 
	Inductively, we have that $\sigma$ maps the average $d_1$ of 
	$A\backslash\{a\}$ 
	to the average $d_2$ of $\sigma[A]\backslash\{b\}$. Let 
	$\varepsilon_1$ and $\varepsilon_2$ be the unique elements in $P$ such 
	that $\varepsilon_1\star 
	d_1=a$ and $\varepsilon_2\star d_2=b$. A straight-forward  computation 
	yields that 
	$\frac{\varepsilon_1}{n}\star d_1$, resp. 
	$\frac{\varepsilon_2}{n}\star d_2$, is the average of $A$, resp. of 
	$\sigma[A]$. Now the 
	claim follows since $\sigma$ maps $\frac{\varepsilon_1}{n}$ to 
	$\frac{\varepsilon_2}{n}$.
~\end{proof}

\begin{lemma}\label{L:stat}
	Let $a_1=(\alpha_1,0),\ldots,a_n=(\alpha_n,0)$ be elements in $S$. The 
	type $\tp(a_1,\ldots,a_n / \alpha_1,\ldots,\alpha_n)$ is 
	stationary.	
\end{lemma}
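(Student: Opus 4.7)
The plan is to proceed by induction on $n$. The key observation is that, for each $i$, the type $\tp(a_i/\alpha_1, \ldots, \alpha_n, a_1, \ldots, a_{i-1})$ concentrates on the strongly minimal fiber $\pi^{-1}(\alpha_i)$ by Remark \ref{R:genCov}, so it is either the generic type of this fiber---which is stationary as the unique non-algebraic type---or algebraic. Once I establish stationarity of each such successive piece, the joint stationarity of $\tp(a_1, \ldots, a_n/\alpha_1, \ldots, \alpha_n)$ will follow from the standard composition principle in stability: if $\tp(x/A)$ and $\tp(y/A, x)$ are both stationary, then so is $\tp(x, y/A)$.

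The delicate case is therefore the algebraic one, for which I would show that actually $a_i \in \dcleq(\alpha_1, \ldots, \alpha_n, a_1, \ldots, a_{i-1})$, thus giving multiplicity one. The argument exploits the additive-group structure of the automorphism group. Any field automorphism $\tau$ of $\mathbb{C}$ fixing $\alpha_1, \ldots, \alpha_n$ admits a \emph{trivial lift} to an automorphism of $\mathcal{M}$ given by $(\alpha, \beta) \mapsto (\tau(\alpha), \tau(\beta))$; verifying compatibility with $\pi$, $\star$, and $\oplus$ directly from Definition \ref{D:AdditiveCover} shows this is indeed an automorphism, and the lift fixes each $a_j = (\alpha_j, 0)$ because $\tau(\alpha_j) = \alpha_j$ and $\tau(0) = 0$. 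It follows that the orbit of $a_i$ under $\Aut(\mathcal{M}/\alpha_1, \ldots, \alpha_n, a_1, \ldots, a_{i-1})$ coincides with its orbit under the subgroup of automorphisms fixing $P$ pointwise and fixing $a_1, \ldots, a_{i-1}$. Via the canonical embedding $\Aut(\mathcal{M}/P) \hookrightarrow \{F : \mathbb{C} \to \mathbb{C} \text{ additive}\}$, this orbit is parameterized by the image of the evaluation-at-$\alpha_i$ map restricted to the relevant subgroup, which is an additive subgroup of $(\mathbb{C}, +)$. Since $(\mathbb{C}, +)$ is torsion-free, any finite subgroup is trivial; hence a finite orbit forces $a_i \in \dcleq(\alpha_1, \ldots, \alpha_n, a_1, \ldots, a_{i-1})$, yielding stationarity of the algebraic case.

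The main obstacle is precisely this algebraic case: one needs both the trivial-lift reduction (which requires checking that the proposed map is an automorphism with respect to \emph{all} the structure on $\mathcal{M}$, not just the data from Definition \ref{D:AdditiveCover}) and the torsion-freeness of $(\mathbb{C}, +)$. Once this is in place, iterating the composition principle over $i = 1, \ldots, n$ completes the proof of stationarity.
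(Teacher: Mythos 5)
Your proof is correct, and it takes a somewhat different route than the paper's. The paper picks a maximal $\bar\alpha$-independent subtuple $\hat a$ of $(a_1,\dots,a_n)$ in one shot, observes that $\tp(\hat a/\bar\alpha)$ is stationary by strong minimality of the fibers, and then shows each remaining $a_i$ is \emph{definable} over $\bar\alpha,\hat a$: it takes the average $b_i=(\alpha_i,b_i')$ of the finite set of $\{\bar\alpha,\hat a\}$-conjugates of $a_i$ (definable by Lemma \ref{L:mean}), uses the trivial lift $(\tau,\tau\times\tau)$ of a field automorphism $\tau$ fixing $\bar\alpha$ to conclude that $b_i'$ is $\bar\alpha$-definable in $P$, and recovers $a_i=(-b_i')\star b_i$. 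You instead run a coordinate-by-coordinate induction via the composition-of-stationarity principle, and in the algebraic step you argue directly that a finite orbit must be trivial because, after quotienting out the trivial lift, the orbit is a coset of the image of an additive subgroup of $(\mathbb{C},+)$ under evaluation at $\alpha_i$, and $(\mathbb{C},+)$ is torsion-free. Both approaches rest on the same two pillars — the trivial lift (which exists because $\mathcal{M}$ is a reduct of the full structure $(\mathbb{C},\mathbb{C}\times\mathbb{C})$, so your worry about ``all the structure'' is resolved for free) and the $\mathbb{Q}$-linear structure of the automorphism group over $P$ — but where the paper routes through an explicit definable representative (the average), you use torsion-freeness to kill the orbit outright. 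Your phrasing that the orbit under $\Aut(\mathcal{M}/\bar\alpha,a_1,\dots,a_{i-1})$ ``coincides'' with the orbit under the subgroup fixing $P$ pointwise deserves a line of justification: the clean way to say it is that every $\sigma$ in the larger group factors as $\sigma_\tau\circ\sigma'$ with $\sigma_\tau$ the trivial lift of $\sigma|_P$ (which fixes all the $a_j$) and $\sigma'\in\Aut(\mathcal{M}/P,a_1,\dots,a_{i-1})$, so $\sigma(a_i)=\sigma_\tau(\sigma'(a_i))=a_i$ once the smaller orbit is shown to be trivial. With that small expansion your argument is complete.
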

\begin{proof}
	Choose a maximal subtuple $\hat{a}$  of $(a_1,\ldots,a_n)$ (algebraic) 
	independent 
	over the tuple $\bar{\alpha}=(\alpha_1,\ldots,\alpha_n)$. Note that 
	each
	$a_i$ is algebraic over $\bar{\alpha},\hat{a}$.  Set
	$b_i=(\alpha_i,b_{i}')$ the average of the finite set of 
	$\{\bar{\alpha},\hat{a}\}$-conjugates of $a_i$. The element $b_i$ is 
	definable over $\bar{\alpha},\hat{a}$, by Lemma \ref{L:mean}.
	
	\begin{claim*}
	The second coordinate $b_{i}'$ of the average $b_i$  is definable (as 
	an element of $P$) over $\bar{\alpha}$.
	\end{claim*}
	\begin{claimproof*}
	We need only show that $b_{i}'$ is fixed by every automorphism $\tau$ 
	of the sort $P$ fixing $\bar{\alpha}$. The map 
	$\sigma=(\tau,\tau\times \tau)$ is an automorphism of the full 
	structure $(\mathbb{C},\mathbb{C}\times\mathbb{C})$, and hence of the 
	reduct $\mathcal M$. Since $\tau(0)=0$, the automorphism $\sigma$ 
	fixes $\bar{\alpha},a_1,\ldots, a_n$. Hence $\sigma(b_i)=b_i$, so in 
	particular $\tau(b_{i}')=b_{i}'$.\end{claimproof*}
	Therefore $a_i=(-b_{i}')\star b_i$ is definable 
	over $\bar{\alpha},\hat{a}$. Since the fibers of the projection $\pi$ 
	are 
	strongly minimal (see Remark \ref{R:genCov}),  the type
	$\tp(\hat{a}/\bar{\alpha})$ is  stationary, so we obtain the desired 
	conclusion.
~\end{proof}
The above proof yields in particular the following: 
\begin{remark}\label{R:algFib}
	Every automorphism $\tau$ of $P$ fixing a subset $A$ induces an 
	automorphism $\sigma$ of the additive cover which fixes all the 
	elements of $S$ of 
	the form $(\alpha,0)$, with $\alpha$ in $A$. 
	
   The definable and algebraic closure of $P$ in the sort $S$ 
   coincide: 		\[ S \cap \acl(P) = S \cap 
   \dcl(P).  \]
	
	Given a set of parameters $B$ in $S$ and an element $\beta$ in the 
	sort $P$, all elements of the strongly minimal fiber $\pi^{-1}(\beta)$ 
	have 
	the same type over $B, P$ whenever the element $b=(\beta,0)$ of 
	$S$ is not algebraic over $B, P$. 
\end{remark}

\begin{lemma}\label{L:indep}
	Let $a_1=(\alpha_1,0),\ldots,a_n=(\alpha_n,0)$ be elements in the sort 
	$S$ with generic independent elements $\alpha_i$ in $P$.
	If the sort $S$ is not almost $P$-internal, then the $a_i$'s are 
	generic independent.
\end{lemma}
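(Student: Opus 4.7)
The plan is to proceed by induction on $n$, showing that $\RM(a_1, \ldots, a_n) = 2n$ (so the $a_i$ form a generic independent tuple in $S$, since $\RM(S)=2$). Setting $B = \{a_1, \ldots, a_{n-1}\}$ and assuming inductively that $\RM(B) = 2(n-1)$, it suffices to establish $a_n \ind_\emptyset B$, i.e.\ $\RM(a_n / B) = \RM(a_n) = 2$.

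First I would show that $\acl(B) \cap P = \acl(\alpha_1, \ldots, \alpha_{n-1})$ by a straightforward automorphism argument: any $\tau \in \Aut(P / \alpha_1, \ldots, \alpha_{n-1})$ lifts via Remark~\ref{R:algFib} to $\sigma \in \Aut(\mathcal{M})$ fixing every $a_i = (\alpha_i, 0)$, so any $\beta \in P \cap \acl(B)$ has finite orbit under such $\tau$ and hence lies in $\acl(\alpha_1, \ldots, \alpha_{n-1})$. Since $\alpha_n \ind_\emptyset (\alpha_1, \ldots, \alpha_{n-1})$ in $P$ by hypothesis, this yields $\RM(\alpha_n / B) = 1$. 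Using $\alpha_n \in \dcl(a_n)$ and additivity of Morley rank, $\RM(a_n / B) = 1 + \RM(a_n / B, \alpha_n)$, so the task reduces to showing $\RM(a_n / B, \alpha_n) = 1$; by strong minimality of the fiber $\pi^{-1}(\alpha_n)$ this is equivalent to $a_n \notin \acl(B, \alpha_n)$.

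The core step is the stronger claim that $a_n \notin \acl(B, P)$. Suppose for contradiction that $a_n \in \acl(B, P)$. For every $\tau \in \Aut(P / \alpha_1, \ldots, \alpha_{n-1})$, its lift $\sigma \in \Aut(\mathcal{M})$ fixes $B$ pointwise and permutes $P$ setwise, so $\acl(B, P)$ is $\sigma$-invariant and therefore $\sigma(a_n) \in \acl(B, P) \cap \pi^{-1}(\tau(\alpha_n))$. As $\tau(\alpha_n)$ ranges over all generics of $P$ over $\alpha_1, \ldots, \alpha_{n-1}$ (equivalently, over $B$, by the previous paragraph), each such fiber contains an element algebraic over $B \cup P$. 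Applying the $\star$-action of $P$ to that element propagates algebraicity throughout, so $\pi^{-1}(\alpha') \subseteq \acl(B, P)$ for every $\alpha' \in P$ generic over $B$. Hence every generic element of $S$ lies in $\acl(B \cup P)$, making $S$ almost $P$-internal, contrary to hypothesis.

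From the core step, $a_n \notin \acl(B, \alpha_n)$, so $\RM(a_n / B, \alpha_n) = 1$ and $\RM(a_n / B) = 2 = \RM(a_n)$, completing the induction (the case $n=1$ corresponds to taking $B = \emptyset$ and showing $\RM(a_1) = 2$ directly from the core step). The main obstacle is precisely this core step: one must promote a single $P$-algebraic relation for $a_n$ over $B$ into a global $P$-internality statement for $S$, by combining the lifting of $P$-automorphisms granted by Remark~\ref{R:algFib} (to transport the relation across generic fibers) with the $\star$-action of $P$ (to fill up each individual fiber).
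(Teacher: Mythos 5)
Your proof is correct and follows essentially the same route as the paper's: Remark~\ref{R:algFib} controls $\acl\cap P$ over the $a_i$, the $\star$-action fills a generic fiber into $\acl(B,P)$, and one contradicts the failure of almost $P$-internality of $S$. The only bookkeeping difference is that the paper stays in the single fiber $\pi^{-1}(\alpha_n)$ (already a generic fiber over $\bar a_{<n}$) and exhibits $c=\gamma\star a_n$ directly, whereas you additionally transport the putative algebraicity of $a_n$ to all generic fibers by lifting further $P$-automorphisms, a correct but redundant step.
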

\begin{proof}
	Choose some $\beta$ generic in $P$ independent from 
	$\alpha_1$ and set $a=(\alpha_1,\beta)$ in $S$. Note that the Morley 
	rank of $a$ is two. If $a_1$ were not generic, then  
	$a_1$ 
	must be algebraic over the generic element $\alpha_1$ of $P$. Since 
	$a=\beta\star a_1$, it would 
	follow that the generic element 
	$a$ of $S$ is algebraic over $P$, which contradicts our assumption that
	the sort $S$ is not almost $\p$-internal. Hence $a_1$ is generic in 
	$S$.
	
	Now, we inductively assume that the tuple $(a_1,\ldots,a_{n-1})$ 
	consists of generic independent elements and want to show that $a_n 
	\ind \bar{a}_{<n}$. Assume otherwise that $a_n \nind 
	\bar{a}_{<n}$.
	Note that $\alpha_n$ is not algebraic over $\bar{a}_{<n}$, by 
	Remark \ref{R:algFib}, since $\alpha_n$ is not algebraic over   
	$\bar{\alpha}_{<n}$. Thus $a_n  
	\nind_{\alpha_n} \bar{a}_{<n}$, so $a_n$ is algebraic over 
	$\alpha_n \bar{a}_{<n}$.
	Choose now some  element $\gamma$ in $P$ generic over 
	$(\alpha_1,\ldots, \alpha_n)$ 
	and set $c=(\alpha_n,\gamma)=\gamma\star a_n$ in $S$. Note that $c$ is 
	algebraic over 
	$\bar{a}_{<n}P$. Observe that $\RM(c/\bar{a}_{<n})=2$, by the choice 
	of $\gamma$, so we conclude that $S$ is almost $\p$-internal, which 
	gives the desired contradiction. 
~\end{proof}

We conclude this section with a full description of the Galois groups of 
stationary 
$P$-internal types 
in additive covers, whenever the sort $S$ is not almost $P$-internal.

\begin{remark}\label{R:Gal}
	If $S$ is not $P$-internal, then every definable subgroup of 
	$(\mathbb{C}^n,+)$ appears as a Galois group and conversely every 
	Galois group is (definably isomorphic to) such a subgroup.
\end{remark}
\begin{proof}
	We show first that every definable subgroup $G$ of $(\mathbb{C}^n,+)$ 
	appears as a Galois group. Let 
	$a_1=(\alpha_1,0),\ldots,a_n=(\alpha_n,0)$ be elements in the sort 
	$S$ with generic independent elements $\alpha_i$ in $P$ and
	set
	\[ E = \{ \bar{x}\in S^n \ |\ \exists \bar{g}\in G \bigwedge_{i=1}^{n} 
	g_i \star a_i = x_i    \}.  \]
	The type $\stp(a_1,\ldots,a_n/\ulcorner E\urcorner)$ is $P$-internal 
	because $\alpha_1,\ldots,\alpha_n$ are definable over 
	$\ulcorner 
	E\urcorner$. We show that $G$ is the Galois group of this type.  
	If
	\[ b_1,\ldots,b_n \equiv_{\ulcorner E\urcorner, P} a_1,\ldots,a_n, \]
	then $\bar{b}$ is in $E$ and there is an unique $\bar{g}$ in 
	$G$ with $\bar{g}\star \bar{a}=\bar{b}$.
	Now assume that conversely $\bar{g}\star \bar{a}=\bar{b}$ for some 
	$\bar{g}$ in $G$. Note that for $1\leq k\leq n$ the element $a_k$ is 
	not algebraic over $\bar{a}_{<k},P$ by Lemma \ref{L:indep}, since $S$ 
	is not almost $P$-internal. Hence, Remark \ref{R:algFib} yields  
	that all elements in the fiber $\pi^{-1}(\alpha_k)$ have the same type 
	over $\bar{a}_{<k},P$. This shows that we can inductively construct an 
	automorphism $\sigma$ in $\Aut(\mathcal{M}/P)$ with $\sigma(a_k)=g_k 
	\star a_k$ for $1\leq k\leq n$.  The 
	automorphism $\sigma$ determines an element of the Galois group of the 
	fundamental type $\stp(a_1,\ldots,a_n/\ulcorner E\urcorner)$.
	
	Now we show that every Galois group is of the claimed form.
	The Galois group $G$ of a  
	real stationary fundamental 
	$P$-internal type 
	$\tp(a_1,\ldots,a_n/B)$
	equals 
	\[ G = \{ (g_1,\ldots,g_n)\in P^n \ \vert \  
	g_1 \star a_1 , \ldots , g_n \star a_n \equiv_{B,P} a_1 , 
	\ldots, a_n   \}.   \]
	More generally, given an imaginary element $e=f(a)$, where $a$ is a 
	real tuple and $f$ is an $\emptyset$-interpretable function,  the 
	Galois group of the stationary fundamental 
	$P$-internal type $\tp(e/B)$ equals 
	\[ \{ g\in P^n \ \vert \  
	f(g\star a) \equiv_{B,P} e   \}.   \]
	Hence, the statement follows, since every Galois group is the Galois 
	group of a stationary fundamental (possibly imaginary)  type.
	Note that we did not use here that $S$ is not almost $P$-internal.
~\end{proof}

\section{Imaginaries in additive covers}\label{S:Imag}

In order to answer Question \ref{Q:2}, we are led to study imaginaries in 
additive covers, with a particular focus to the additive covers 
in the Example \ref{E:covers}. We will 
first show that neither the counterexample $\mathcal{M}_1$ to the 
CBP of \cite{HPP13} nor the additive cover  $\mathcal{M}_0$ eliminate 
imaginaries. 

\begin{lemma}\label{L:imag}
	The additive cover $\mathcal{M}$ does not eliminate imaginaries if 
	every derivation on $\mathbb{C}$ induces an automorphism 
	in $\Aut(\mathcal{M}/P)$. 
\end{lemma}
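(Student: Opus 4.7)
The plan is to argue by contradiction: assume $\mathcal{M}$ eliminates imaginaries. The hypothesis will then clash, via EI, with the orbit structure of $\Aut(\mathcal{M}/P)$ on fibers.

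First I would exploit the hypothesis to establish strong transitivity. For every $\alpha\in P$ transcendental over $\mathbb{Q}$ and every $\beta\in\mathbb{C}$, pick a derivation $D$ on $\mathbb{C}$ with $D(\alpha)=\beta$ (extend from a transcendence basis containing $\alpha$); the induced $\sigma_D\in\Aut(\mathcal{M}/P)$ takes $(\alpha,0)$ to $(\alpha,\beta)$. So $\pi^{-1}(\alpha)$ is a single orbit of $\Aut(\mathcal{M}/P)$. This transitivity survives adjoining any finite real parameter tuple $A\subseteq P\cup S$, provided $\alpha$ is generic over $A\cap P$: one picks a derivation vanishing on the finite set $\pi(A\cap S)\cup(A\cap P)\subseteq\mathbb{C}$ while sending $\alpha$ to any prescribed value. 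Consequently no element of $\pi^{-1}(\alpha)$ lies in $\dcl(A)$ for such an $A$, and no real-parameter-definable section of $\pi$ exists on any cofinite open subset of $P$.

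Under the supposed EI, every imaginary $e\in\dcleq(\alpha)$ is interdefinable with some real tuple $r\in\dcl(\alpha)$. To reach the contradiction it therefore suffices to exhibit an $e\in\dcleq(\alpha)$ whose definable closure contains a specific element of the fiber $\pi^{-1}(\alpha)$: such an $e$ would drag that fiber element into $\dcl(r)=\dcl(\alpha)$, against the orbit analysis.

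The construction of $e$ is the heart of the argument and the main obstacle. The natural candidates flowing from Remark~\ref{R:Gal} --- canonical parameters of $\Aut(\mathcal{M}/P)$-orbits on products of fibers, classes modulo $\mathbb{C}$-linear subgroup actions, and torsor differences that land in $\pi^{-1}(0)\cong P$ --- all eliminate to real tuples built from $\alpha$-coordinates together with intrinsic, base-point-free data. The needed $e$ must exploit the interaction between several fibers across algebraically dependent tuples $\alpha_{1},\ldots,\alpha_{n}\in P$: the constraint that derivations respect the prescribed algebraic relations between the $\alpha_{i}$'s cuts down the orbit-moving subgroup sharply enough that its class canonical parameters can no longer absorb a base point into intrinsic torsor data, forcing an explicit element of one of the fibers to appear in the imaginary code. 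This is where the $\emptyset$-definable group operation $\oplus$ on $S$, combined with $\star$ and (in $\mathcal{M}_{1}$) the product $\otimes$, produces the genuinely twisted imaginary that the supposed EI would spuriously trivialize --- giving the required contradiction.
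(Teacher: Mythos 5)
Your opening observation---that derivations can be prescribed freely on a transcendence basis, hence $\Aut(\mathcal{M}/P)$ acts transitively on the fiber $\pi^{-1}(\alpha)$ for $\alpha$ transcendental, and that this transitivity persists after naming finitely many real parameters in general position---is correct and matches the spirit of the paper. But the proof never actually gets off the ground: the entire last paragraph, which is supposed to produce the imaginary that witnesses failure of elimination, is an explicit admission that you have not found the construction. Phrases like ``the needed $e$ must exploit the interaction between several fibers'' and ``this is where \ldots{} produces the genuinely twisted imaginary'' describe what a proof would have to do, not a proof. In its current form the argument has a hole exactly where the content should be.

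Worse, the framing you set up is harder to realize than what the paper does. You aim for an imaginary $e\in\dcleq(\alpha)$ whose real code would drag an element of $\pi^{-1}(\alpha)$ into $\dcl(\alpha)$; but any $\alpha$-definable equivalence class stable under all of $\Aut(\mathcal{M}/P)$ will have a real code living in $P\cup\pi^{-1}(\mathbb{Q}^{\mathrm{alg}})$, not in $\pi^{-1}(\alpha)$, so this route does not obviously close. The paper instead fixes a \emph{single} derivation $D$ with kernel $\mathbb{Q}^{\mathrm{alg}}$, takes two independent generics $\alpha,\beta$ with lifts $a,b$, and forms the definable set
\[
E=\{(x,y)\in S^2 :\ \exists(\lambda,\mu)\in P^2\,(\lambda\star a=x\ \&\ \mu\star b=y\ \&\ D(\beta)\lambda-D(\alpha)\mu=0)\},
\]
a line in the two-dimensional torsor $\pi^{-1}(\alpha)\times\pi^{-1}(\beta)$ whose slope is \emph{external} data $(D(\alpha),D(\beta))$. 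The induced $\sigma_D$ fixes $E$ setwise (since $D(\beta)D(\alpha)-D(\alpha)D(\beta)=0$), so a putative real canonical parameter would have to lie in $P\cup\pi^{-1}(\mathbb{Q}^{\mathrm{alg}})$ and hence be fixed by \emph{every} derivation-induced automorphism; but a derivation $D_1$ with $D_1(\alpha)=1$, $D_1(\beta)=0$ moves $E$. This concrete two-fiber set is precisely the ``genuinely twisted imaginary'' you gesture at but do not construct; until you supply such a set (or an equivalent device), the proposal is incomplete.
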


\begin{proof}
	Choose two generic independent elements $\alpha$ and $\beta$  in 
	the sort $P$, and pick elements $a$ and $b$  in the fiber of $\alpha$ 
	and $\beta$, respectively.  Fix a derivation $D$ with kernel 
	$\mathbb{Q}^{\text{alg}}$.  Let us assume for a contradiction that the 
	definable set
	\[E=\{(x,y)\in S^2 \ |\ \exists (\lambda,\mu)\in P^2 (\lambda\star a=x 
	\ \& \  
	\mu\star b=y \ \& \ D(\beta)\lambda-D(\alpha)\mu=0)\}\] has a real 
	canonical parameter $e$. By hypothesis, the derivation $D$ induces an 
	automorphism $\sigma_{D}$ in $\Aut(\mathcal{M}/P)$. Note that 
	$\sigma_{D}$ must fix $E$ 
	setwise, because 
	$D(\beta)D(\alpha)-D(\alpha)D(\beta)=0$. Therefore (every element 
	of) the tuple $e$ lies in $P\cup\pi^{-1}(\mathbb{Q^{\text{alg}}})$. 
	In particular, the definable set $E$ is permuted by every automorphism 
	induced by a derivation. Now 
	let $D_1$ be a derivation with $D_1(\alpha)=1$ and $D_1(\beta)=0$, and 
	note that 
	$\sigma_{D_1}$ does not permute $E$,  since 
	$D(\beta)\cdot 1-D(\alpha)\cdot 0=D(\beta)\neq 0$, which gives the 
	desired contradiction.
~\end{proof}

The proof of \cite{HPP13} shows that the sort $S$ in an additive cover 
$\mathcal M$ is not almost $P$-internal, whenever every derivation on 
$\mathbb{C}$ induces an automorphism in $\Aut(\mathcal{M}/P)$. We will now 
give a strengthening of Lemma \ref{L:imag}.

\begin{proposition}\label{P:ImagInt}
	If the additive cover $\mathcal{M}$ eliminates imaginaries, then the 
	sort $S$ is $P$-internal.
\end{proposition}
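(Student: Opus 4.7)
The plan is to argue by contrapositive: assume $S$ is not $P$-internal, and construct a definable subset of $S^2$ whose canonical parameter is not interdefinable with any real tuple. Write $H = \{F_\sigma : \sigma \in \Aut(\mathcal{M}/P)\}$, a $\mathbb{Q}$-vector subspace of additive maps on $\mathbb{C}$. A first structural observation is that $H$ is closed under conjugation $F \mapsto \tau F \tau^{-1}$ by any field automorphism $\tau$ of $\mathbb{C}$, since each such $\tau$ lifts to $\tilde\tau \in \Aut(\mathcal{M})$ as in the proof of Lemma \ref{L:stat}. Combined with the back-and-forth construction underlying Remark \ref{R:Gal}, the non-$P$-internality of $S$ yields the following richness of $H$: for every finite $K \subseteq P$ and every $\alpha, \beta \in P$ generic independent over $K$, the evaluation map $F \mapsto (F(\alpha), F(\beta))$ restricted to $\{F \in H : F|_K = 0\}$ surjects onto $\mathbb{C}^2$.

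Fix generic independent $\alpha, \beta$ in $P$, set $a = (\alpha,0)$ and $b = (\beta,0)$ in $S$, and pick $c \in P$ transcendental over $\mathbb{Q}(\alpha, \beta)$. Using the richness with $K = \emptyset$, choose $F_0 \in H$ with $F_0(\alpha) = 1$ and $F_0(\beta) = c$, and consider the definable set
\[E = \{(x, y) \in S^2 \ | \ \pi(x) = \alpha \ \& \ \pi(y) = \beta \ \& \ \exists \lambda \in P\,(x = \lambda \star a \ \& \ y = c\lambda \star b)\}.\]
A direct computation, parallel to the one opening the proof of Lemma \ref{L:imag}, shows that $\sigma_{F_0}$ preserves $E$ setwise, using $c \cdot F_0(\alpha) = F_0(\beta)$.

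Suppose for contradiction that $E$ admits a real canonical parameter $e = (e_1, \ldots, e_m) \in (P \cup S)^m$. Since $\sigma_{F_0}(E) = E$ and $e$ is real, $\sigma_{F_0}$ fixes $e$ pointwise, forcing $F_0(\pi(e_i)) = 0$ for every $S$-component $e_i$. Let $K = \{\pi(e_i) : e_i \text{ is an } S\text{-component of } e\}$; then $K \subseteq \ker F_0 \cap P$, and $\alpha, \beta \notin K$ (else $F_0(\alpha) = 0$ or $F_0(\beta) = 0$). Applying the richness to this $K$ yields $F_1 \in H$ with $F_1|_K = 0$, $F_1(\alpha) = 1$, and $F_1(\beta) = c'$ for any prescribed $c' \ne c$. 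Then $\sigma_{F_1}$ fixes each $e_i$ (trivially on $P$-components, and via $F_1|_K = 0$ on $S$-components), yet $\sigma_{F_1}$ does not preserve $E$ setwise, since $F_1(\beta)/F_1(\alpha) = c' \ne c$. This contradicts the canonicality of $e$ for $E$.

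The main obstacle is the richness statement in the form required at the last step. The back-and-forth underlying Remark \ref{R:Gal} readily supplies the surjectivity when $\alpha, \beta$ are generic independent over $K$, but here $K$ is produced from $e$ only after fixing $\alpha, \beta, c$, so a priori $\alpha, \beta$ may fail to be generic over $K$. The strategy is either to show $K \subseteq \acl(\alpha, \beta)$, by arguing that the level datum of $e$ depends only on the base-point portion of the defining parameters and not on $c$, or to extend Lemma \ref{L:indep} to tuples whose first coordinates need not be generic, exploiting that $\mathbb{Q}$-linear maps can be freely prescribed on $\mathbb{Q}$-linearly independent sets regardless of algebraic dependencies.
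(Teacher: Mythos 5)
Your route is genuinely different from the paper's. The paper constructs, via a transfinite bookkeeping over a transcendence basis of $\mathbb{C}$, a single automorphism $\sigma\in\Aut(\mathcal{M}/P)$ whose only fixed points in $S$ lie in $\dcl(P)$, defines $E$ with slope $F_\sigma(\beta)/F_\sigma(\alpha)$ so that $\sigma$ preserves $E$ and pushes any real canonical parameter into $\dcl(P)$, and then a further $\tau\in\Aut(\mathcal{M}/P)$ fixing $b$ and moving $a$ yields the contradiction. You instead take the slope $c$ to be a generic field element and try to exhibit two elements of $\Aut(\mathcal{M}/P)$ agreeing on the putative real canonical parameter $e$ yet disagreeing on $E$, exploiting conjugation-invariance of $H$ rather than building a single generic automorphism. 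This is a plausible and arguably cleaner route.

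However the gap you flag at the end is genuine, and the argument does not close as written. The richness step produces $F_1$ only when $\alpha,\beta$ are generic independent over $K=\pi(e\cap S)$, and this is not granted: canonicality of $e$ forces the two-sided identity $\{F\in H : F|_K=0\}=\{F\in H : F(\beta)=cF(\alpha)\}$, so $K$ must encode the defining relation of $E$, and nothing rules out, say, a nontrivial $\mathbb{Q}$-linear combination of $\alpha,\beta$ lying in $K$, which would destroy genericity. Your first suggested repair, $K\subseteq\acl(\alpha,\beta)$, is the right move, but by itself it does not restore genericity of $\alpha,\beta$ over $K$ (take $\alpha+\beta\in K$), so it does not rescue your last step; it enables a different conclusion, which must replace it.

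Here is how to finish. If some $\gamma_0\in K$ lay outside $\acl(\alpha,\beta)$, then by the displayed identity every $F\in H$ with $F(\alpha)=F(\beta)=0$ would satisfy $F(\gamma_0)=0$; conjugating by $\Aut(P/\alpha,\beta)$, which acts transitively on $P\setminus\acl(\alpha,\beta)$, such an $F$ would vanish on $P\setminus\acl(\alpha,\beta)$ and hence everywhere, making $\Aut(\mathcal{M}/P,a,b)$ trivial, whence $\mathcal{M}=\dcl(P,a,b)$ and $S$ would be $P$-internal, a contradiction. Therefore $K\subseteq\acl(\alpha,\beta)$. Now pick $\tau\in\Aut(P)$ fixing $\acl(\alpha,\beta)$ pointwise and moving $c$; its coordinatewise lift $\tilde\tau$ fixes $a,b$ and $K$ pointwise and sends $e$ to a real canonical parameter of the set defined with slope $c'=\tau(c)\neq c$. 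The same identity then holds with $c'$ in place of $c$ and the very same $K$, giving $\{F\in H:F(\beta)=cF(\alpha)\}=\{F\in H:F(\beta)=c'F(\alpha)\}$, which is false as witnessed by your $F_0$. So the argument can be made to work, but the endgame is a substantive replacement of the one you wrote, not a completion of it.
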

\begin{proof}
 We will mimic the proof of Lemma \ref{L:imag}. Assume for a 
 contradiction that the sort $S$ is not $P$-internal and choose two 
 generic independent elements $a$ and $b$ in $S$. 	Since $S$ is not 
 $P$-internal, there is 
	an automorphism $\tau$ in $\Aut(\mathcal{M}/P)$ which fixes $b$ and 
	moves 
	$a$. If we can construct an automorphism $\sigma$ (which was 
	$\sigma_D$ in the proof of Lemma \ref{L:imag}) such that it only fixes 
	the 
	definable closure of $P$ (in $S$), we conclude as before that the real 
	canonical parameter $e$ of the definable set \[E=\{(x,y)\in S^2 \ 
	|\ 
	\exists (\lambda,\mu)\in P^2 (\lambda\star 
	a=x 
	\ \& \  
	\mu\star b=y \ \& \ F_\sigma(\beta)\lambda-F_\sigma(\alpha)\mu=0)\}\] 
 is definable over $P$. The automorphism $\tau$ fixes $e$, yet it maps the 
 pair 
 $(a,b)$ in $E$ outside of $E$. 

Hence, we need only show in the rest of the proof that there exists such 
an automorphism $\sigma$. 

Choose an enumeration  of elements 
$a_i=(\alpha_i,0)$ and $b_i=(\beta_i,0)$ in $S$, with $i<2^{\aleph_0}$, 
such that:
\begin{itemize}
	\item The tuple
	$\bar{\alpha}=(\alpha_i)_{i<2^{\aleph_0}}$
	is a transcendence basis of the algebraically closed field 
	$\mathbb{C}$.
	\item For each $i<2^{\aleph_0}$, the element $b_i$ is not algebraic 
	 over 
	$\bar{a},\bar{b}_{<i}$, where 
	$\bar{a}=(a_i)_{i<2^{\aleph_0}}$ and 
	$\bar{b}=(b_i)_{i<2^{\aleph_0}}$. Hence 
	$\RM(b_i/\bar{a},\bar{b}_{<i})=1$ since $\beta_i$ is in 
	$\acl(\bar{a})$.
		\item Each element in $S$ is algebraic over $\bar{a},\bar{b}$.
\end{itemize}
We denote by ${\langle \alpha \rangle}_i$ the unique subtuple of 
$\bar{\alpha}$ of smallest length such that $\beta_i$ is algebraic over 
${\langle \alpha \rangle}_i$. Write $\mathcal X$ for the set of all 
finite subtuples of $\bar{\alpha}$ and consider the map 
\[	\begin{array}{rccc}
\Phi: &  \mathcal X & \rightarrow & 
2^{\aleph_0} \\[2mm]
& (\alpha_{i_1},\ldots,\alpha_{i_n})&\mapsto& \max(i_1,\ldots,i_n).
\end{array}\]
The partial function $F$ defined by
\[ 
F(\alpha_i)=\alpha_{\omega^{i+1}}
 \ \ \textnormal{ and } \ \ 
F(\beta_i)=\alpha_{\omega^{\max\left(i,\Phi({\langle \alpha 
\rangle}_i)\right)+1}+\omega^{i}}
\]
is clearly injective. It follows inductively by Remark \ref{R:algFib}  
that 
\[\bar{a},\bar{b} \equiv_{P} 
F(\bar{a})\star\bar{a},F(\bar{b})\star\bar{b},\]
so $F$ induces a partial automorphism fixing $P$ pointwise with domain  
the set
\[\big(\pi^{-1}(\bar{\alpha})\times\pi^{-1}(\bar{\beta})\big)\cup P.\]
 
Given an element $c$ in $S$, it is by construction algebraic over $\bar a, 
\bar b$, so the average of its conjugates is definable over $\bar a, \bar 
b$, 
by Lemma \ref{L:mean}. Thus, every element  of $S$ is definable over 
$\bar{a}, \bar{b}, P$. Therefore the above partial automorphism extends 
uniquely to an automorphism $\sigma$ in $\Aut(\mathcal{M}/P)$.	
\begin{claim*}
The automorphism  $\sigma$ only fixes the definable 
closure of $P$ in $S$. 
\end{claim*}
\begin{claimproof*}
 Since $\sigma$ fixes the sort $P$, it suffices to show that all 
elements $c$ fixed by $\sigma$ of the form $c=(\gamma,0)$ are definable 
over $P$.  
Otherwise, choose subtuples of least possible length 
\[  \hat{a}=(a_{i_1},\ldots,a_{i_m}) \ \  \textnormal{ and } \ \ 
\hat{b}=(b_{j_1},\ldots,b_{j_n})
\]
of $\bar{a}$ and $\bar{b}$ such that $c$ is 
definable over  $\hat{a},\hat{b},P$. Note that $\max(n,m)>0$ and that 
every element in the fiber 
of 
$\gamma$ is definable over $\hat{a},\hat{b},P$.
The type
\[ 
p=\tp(\hat{a},\hat{b},c/\hat{\alpha},\hat{\beta},\gamma)
\]
is fundamental and stationary by Lemma \ref{L:stat}. Its Galois 
group $G$ is a definable additive subgroup of 
$\mathbb{C}^{m+n+1}$, by Remark \ref{R:Gal}.
If $\gamma$ is not algebraic over 
$\hat{\alpha},\hat{\beta}$, Lemma 
\ref{L:indep} yields that 
$c \ind \hat{a},\hat{b}$, so the type $\stp(c)$ is $P$-internal and 
hence so is (the generic element in the fiber $\pi^{-1}(\gamma)$ of) $S$,  
contradicting our assumption. 
Since  the Galois group $G$ of 
$p$ is definable over  $\{ 
\hat{\alpha},\hat{\beta},\gamma \}$, we deduce that it is definable over 
\[A=\acl(\hat{\alpha},
{\langle \alpha \rangle}_{j_1},\ldots,{\langle \alpha 
\rangle}_{j_n})\supset  \{ 
\hat{\alpha},\hat{\beta}\}.\] The group $G$ is given by a system 
$\mathcal{G}$ of 
linear equations of the form 
\begin{equation*}
	\lambda_{1} \cdot x_{1}+\dots+\lambda_{m+n+1} \cdot 
x_{m+n+1}=0,
\end{equation*}
with coefficients $\lambda_{i}$ in $A$ and the tuple
\[ ( F(\alpha_{i_1}),\ldots, F(\alpha_{i_m}), 
F(\beta_{j_1}),\ldots, F(\beta_{j_n}), 
0) \]
is a solution.
Set now $\gamma= \Phi\big( (\hat{\alpha},
{\langle \alpha \rangle}_{j_1},\ldots,{\langle \alpha 
	\rangle}_{j_n})\big) <2^{\aleph_0}$. If $\alpha_\gamma=\alpha_{i_k}$ 
	for some $1\le k\le m$, denote $i(\gamma)=i_k=\gamma$. Otherwise set 
	$i(\gamma)=j_\ell$  if $1\le \ell\le n$ is the least index such that 
	$\alpha_\gamma$ is an element in the tuple 
	${\langle 	\alpha \rangle}_{j_\ell}$.

Observe that there is a linear equation in the 
system $\mathcal{G}$ such that the coefficient 
$\lambda_{i(\gamma)}$ is non-trivial, for otherwise every automorphism 
in $\Aut(\mathcal{M}/P)$ fixing all coordinates except (possibly) 
the element $d_{i(\gamma)}$, which is the $i(\gamma)^\text{th}$-coordinate 
of the tuple $(\hat{a},\hat{b})$, 
must also fix  $c$, contradicting the minimality of $m$ and $n$. The set 
\[ B=\{ F(\alpha_{i_1}),\ldots, F(\alpha_{i_m}), 
F(\beta_{j_1}),\ldots, F(\beta_{j_n}) \} \]
consists of distinct elements, by the injectivity of $F$. Therefore, 
if suffices to show that the element $F(d_{i(\gamma)})$ is not algebraic 
over 
\[ A\cup \big(B\backslash \{ F(d_{i(\gamma)})  \}\big)  \]
to reach the desired contradiction. For this we need only show that the 
element $F(d_{i(\gamma)})$ is not contained in the set
\[
\{ \hat{\alpha},
{\langle \alpha \rangle}_{j_1},\ldots,{\langle \alpha 
	\rangle}_{j_n}  \}.
\]

If $d_{i(\gamma)}=\alpha_{i(\gamma)}$, we obtain the result
since
\begin{align*}
\Phi(F(d_{i(\gamma)}))&=\Phi(F(\alpha_\gamma)) \\
&=\Phi(\alpha_{\omega^{\gamma+1}}) \\
&=\omega^{\gamma+1}\geq\gamma+1 \\
&>\gamma= \Phi\big( 
(\hat{\alpha},
{\langle \alpha \rangle}_{j_1},\ldots,{\langle \alpha 
	\rangle}_{j_n})\big).
\end{align*}

Otherwise  $d_{i(\gamma)}=\beta_{i(\gamma)}$, so
\begin{align*}
\Phi(F(d_{i(\gamma)}))&=\Phi(F(\beta_{i(\gamma)}))\\
&=
\omega^{\max\left({i(\gamma)},\Phi({\langle \alpha 
			\rangle}_{i(\gamma)})\right)+1}+\omega^{{i(\gamma)}} \\ 
		&>
\omega^{ \Phi({\langle \alpha 
		\rangle}_{i(\gamma)}) +1  } =
\omega^{\gamma+1},
\end{align*} 
and we conclude the result analogous to the first case. 
\end{claimproof*}	
~\end{proof}

Whenever the sort $S$ is not $P$-internal, the additive cover does  
not eliminate imaginaries. The situation is different for  
finite imaginaries: We will see below that the additive cover 
$\mathcal{M}_0$ does not  eliminate finite imaginaries, however  the 
additive cover $\mathcal{M}_{1}$ does.

\begin{remark}\label{R:FiniteImagM0}
	Choose two generic independent elements $\alpha$ and $\beta$ be two in 
	the sort $P$. The finite subset 
	$\{(\alpha,0),(\beta,0)\}$ of $S$ has no real canonical parameter in 
	$\mathcal{M}_{0}$.
\end{remark}

\begin{proof}
	Assume that the tuple $e$ is a real canonical parameter of the set 
	$\{(\alpha, 
	0),(\beta, 0)\}$. Since the tuple $e$ is clearly definable over 
	$(\alpha,0), 
	(\beta, 0), P$, the projection $\pi(c)$ of every element $c$ in $S$ 
	appearing in $e$ 
	(if any) must be contained in the $\mathbb{Q}$-vector space generated 
	by 	$\alpha$ and $\beta$. 
	
	There is an automorphism $\tau$ of $P$ extending the 
	non-trivial permutation of the set $\{\alpha, \beta\}$, so it is easy 
	to show that there is a rational number $q$ such that 
	$\pi(c)=q\cdot(\alpha+\beta)$. Hence, the tuple $e$ is definable over 
	$(\alpha+\beta,0),P$.

	Therefore, any additive map $F$ with $F(\alpha)=1$ and 
	$F(\beta)=-1$ induces an automorphism $\sigma_F$ fixing $e$, yet it 
	does not permute $\{(\alpha,0),(\beta,0)\}$.
~\end{proof}

In order to show that the additive cover 
$\mathcal{M}_1$ eliminates finite imaginaries, 
we first provide a sufficient condition. 

\begin{proposition}\label{P:generalimag}
  An additive cover $\mathcal{M}$ eliminates 
 finite imaginaries, whenever every finite subset of $S$ on which $\pi$ is 
 injective has a real 
	canonical parameter.
\end{proposition}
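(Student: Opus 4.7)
The plan is to combine the averaging operation from Lemma \ref{L:mean} with elimination of imaginaries in the algebraically closed field $P$: this reduces every finite set to a $\pi$-injective finite subset of $S$ (handled by the hypothesis) together with auxiliary parameters from $P$ (handled by EI of ACF).

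Starting with the core case of a finite subset $X \subseteq S$, I would partition $X = \bigsqcup_{\alpha \in \pi(X)} X_\alpha$ by the fibers of $\pi$. By Lemma \ref{L:mean}, the average $m_\alpha$ of $X_\alpha$ is definable over $X_\alpha$, so the finite set $M = \{m_\alpha : \alpha \in \pi(X)\} \subseteq S$ has $\pi$ injective on it and, by hypothesis, admits a real canonical parameter $e_M$. For each $\alpha$, the deviation set $R_\alpha = \{x \ominus m_\alpha : x \in X_\alpha\}$ sits in the subgroup $\pi^{-1}(0) \subseteq S$, which is $\emptyset$-definably identified with $(P,+)$ via $c \mapsto c\star(0,0)$, so $R_\alpha$ corresponds to a finite subset of $P$ with a real canonical parameter $c_\alpha$ by EI of ACF. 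The assignment $\alpha \mapsto c_\alpha$, viewed as a function from the finite set $\pi(X) \subseteq P$ into some fixed $P^N$, is encoded by a tuple of interpolating polynomials $\bar q$ in $P$. I would then show that $(e_M, \bar q)$ is a real canonical parameter for $X$: any automorphism $\sigma$ fixing it permutes $M$ and preserves the interpolated map $\alpha \mapsto c_\alpha$, so it sends $m_\alpha \mapsto m_{\sigma(\alpha)}$ and $R_\alpha \mapsto R_{\sigma(\alpha)}$, hence fixes $X = \bigsqcup_\alpha (m_\alpha \oplus R_\alpha)$ setwise; the converse is immediate from the definability of all the constructions.

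For a general finite subset $X \subseteq P^a \times S^b$, I would induct on $b$, the case $b = 0$ being immediate from EI of ACF. For $b \geq 1$ the same fibration-and-average strategy applies: project $X$ to $P^{a+b}$ via the componentwise $\pi$ on the $S$-factors, form coordinatewise averages over fibers to obtain a $\pi$-injective finite subset of $P^a \times S^b$, and encode the deviations (which now live in an affine $P$-space) as real parameters via EI of ACF. The main technical obstacle is to verify that the constructed object is genuinely a canonical parameter: one must check that every automorphism fixing it fixes $X$ setwise, which relies on the $\Aut(\mathcal{M})$-equivariance of the averaging (Lemma \ref{L:mean}), the naturality of the identification $\pi^{-1}(0) \cong P$, and the extendability of automorphisms of $P$ to $\mathcal{M}$ via Remark \ref{R:algFib} — so that the interpolating polynomial $\bar q$ captures precisely the $\sigma$-equivariant data relating the fibers.
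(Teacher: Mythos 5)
Your core case (a finite $X\subseteq S$) is correct and is essentially the paper's argument restricted to a single $S$-coordinate: fiber over $\pi$, take averages (Lemma \ref{L:mean}), and code the deviation data in $P$. The paper codes the finite subset $\{(\varepsilon_i,\alpha_i)\}$ of a power of $P$ directly by elimination of imaginaries in ACF, while you package the same data via Lagrange interpolation; both are fine, though the interpolation step is an unnecessary detour. Up to this point the two proofs coincide.

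The gap is in the passage to a general finite $X\subseteq P^a\times S^b$ with $b\geq 2$. You average over the fibers of the \emph{full} tuple projection $\pi:P^a\times S^b\to P^{a+b}$, which yields a $\pi$-injective finite subset $M\subseteq P^a\times S^b$. But the hypothesis of the proposition only supplies real canonical parameters for $\pi$-injective finite subsets \emph{of $S$}, not of $P^a\times S^b$, and your induction on $b$ does not help here: $M$ still has $b$ live $S$-coordinates, and its single-coordinate projections $M^j\subseteq S$ need not be $\pi$-injective (two tuples of $M$ with distinct $P^{a+b}$-images can share the projection of their $j^{\text{th}}$ $S$-coordinate). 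So you cannot feed $M$ (nor the $M^j$) directly into the hypothesis, and the inductive hypothesis is never actually invoked. The fix is to average each $S$-coordinate \emph{separately} over its own single-coordinate $\pi$-fiber, exactly as the paper does with $d_i^j=\text{avg}\bigl(A^j\cap\pi^{-1}(\pi(a_i^j))\bigr)$: this produces, for each $j$, a $\pi$-injective finite subset $\{d_1^j,\dots,d_n^j\}$ of $S$ covered by the hypothesis, and all remaining data (the tuples $\alpha_i$ and the offsets $\varepsilon_i^j$) lives in powers of $P$, where ACF eliminates imaginaries. With that replacement of your fibration step, the rest of your verification (equivariance of averaging and reconstruction of $X$ from $M$ and the offsets) goes through.
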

\begin{proof}
	Let $A$ be the finite set $\{\bar{a}_1,\ldots,\bar{a}_n\}$ of 
	real $m$-tuples. 
	Every function 
	$\Phi:\{1,\ldots, m\}\longrightarrow\{P,S\}$ determines a 
	subset $A_{\phi}$ of $A$, according to whether the 
	$j^\text{th}$-coordinate lies in $P$ or $S$.  Every automorphism 
	permuting $A$ permutes 	each $A_{\Phi}$, so we may assume that for 
	every tuple in $A$, the coordinates have the same configuration (given 
	by the function $\Phi$).

	Since the canonical parameter is only determined up to  
	interdefinability, we may further assume (after possibly permuting the 
	coordinates) that there is a natural number  $0\leq k\leq m$ such that 
	for each tuple $\bar {a}_i$ in $A$:
	\begin{itemize}
		\item The $j^\text{th}$-coordinate $a_{i}^{j}$ lies in $S$ for 
		$1\leq j\leq k$.
		\item The $\ell^\text{th}$-coordinate $a_{i}^{\ell}$ lies in $P$ 
		for $k< \ell\leq m$.
	\end{itemize}
	For every coordinate $1\leq j\leq k$ set $A^j=\{a_{i}^{j} \ | \ 1\leq 
	i\leq n \}$ and $d_{i}^{j}$ 
	 the average of the subset $A^j \cap \pi^{-1}(\pi(a_{i}^{j}))$.  For 
	 $1\leq i\leq 
	 n$ let
now	$\varepsilon_{i}^{j}$ be the unique element in $P$ with
	$a_{i}^{j}=\varepsilon_{i}^{j} 
	\star d_{i}^{j}$. Consider the tuples 
	$\varepsilon_{i}=(\varepsilon_{i}^{1},\ldots,\varepsilon_{i}^{k})$
	and 
	\[ 
	\alpha_i=(\pi(a_{i}^{1}),\ldots,\pi(a_{i}^{k}),a_{i}^{k+1},\ldots,a_{i}^{m})\]
	 
	 in $P$. We need only show that the tuple
		\[ e=\big( \ulcorner \{(\varepsilon_{1},\alpha_{1}),\ldots,
	(\varepsilon_{n},\alpha_{n})
	\}\urcorner,\ulcorner 
	\{d_{1}^{1},\ldots,d_{n}^{1}\}\urcorner,\ldots,\ulcorner 
	\{d_{1}^{k},\ldots,d_{n}^{k}\}\urcorner \big) \]
	is a  canonical parameter of $A$. Note that $e$ is a real tuple since 
	the sets 
	$\{d_{1}^{j},\ldots,d_{n}^{j}\}$ have real canonical parameters, by 
	our assumption. 
	
	Let $\sigma$ be an automorphism. If $\sigma$ permutes the set $A$,  
	Lemma \ref{L:mean} yields
	that $\sigma$ permutes each set
	$\{d_{1}^{j},\ldots,d_{n}^{j}\}$ since the image of $A^j \cap 
	\pi^{-1}(\pi(a_{i}^{j}))$ under $\sigma$ is $A^j \cap 
	\pi^{-1}(\pi(a_{i^*}^{j}))$ for some index $i(\sigma)$ with 
	$\sigma(a_{i}^{j})=a_{i(\sigma)}^{j}$ and 
	$\sigma(\alpha_i)=\sigma(\alpha_{i(\sigma)})$. Therefore 
	$\sigma(\varepsilon_{i})=\varepsilon_{i(\sigma)}$, since 
	 $\sigma(d_{i}^{j})=d_{i(\sigma)}^{j}$. Hence $\sigma$ fixes $e$.  
	
Assume now that $\sigma$ fixes the tuple $e$. The tuple $\alpha_i$ is 
mapped to $\alpha_{i(\sigma)}$ and  \[ 
\sigma(a_{i}^{j})=\sigma(\varepsilon_{i}^{j}) \star 
\sigma(d_{i}^{j})=\varepsilon_{i(\sigma)}^{j}\star \sigma(d_{i}^{j}). \]
It suffices to show that  $\sigma(d_{i}^{j})=d_{i(\sigma)}^{j}$ to 
conclude that $\sigma$ permutes $A$. This follows immediately from
	\[\pi(\sigma(d_{i}^{j}))=\sigma(\pi(d_{i}^{j}))=
	\sigma(\alpha_{i}^{j})=\alpha_{i(\sigma)}^{j},\]  since $\sigma$ 
	permutes the set $\{d_{1}^{j},\ldots,d_{n}^{j}\}$. 
~\end{proof}

Thus, we will deduce that the additive cover $\mathcal{M}_1$ eliminates 
finite imaginaries, by applying Proposition \ref{P:generalimag}, lifting 
the corresponding canonical parameters of finite subsets of $P$ to $S$  
using the ring operations. 

\begin{corollary}\label{C:finiteImag}
	The additive cover $\mathcal{M}_{1}$ eliminates finite imaginaries.
\end{corollary}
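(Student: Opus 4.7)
The plan is to invoke Proposition~\ref{P:generalimag}: it suffices to produce, inside $\mathcal{M}_1$, a real canonical parameter for an arbitrary finite set $A = \{a_1, \ldots, a_n\} \subset S$ on which $\pi$ is injective. Write $a_i = (\alpha_i, a_i')$, so the $\alpha_i$ are pairwise distinct. The key observation is that $(S, \oplus, \otimes)$ is isomorphic to the ring of dual numbers $\mathbb{C}[\varepsilon]/(\varepsilon^2)$ via $(\alpha, a') \leftrightarrow \alpha + a'\varepsilon$, so one can form inside $S$ the elementary symmetric expressions
\[ E_k = e_k(a_1, \ldots, a_n) \in S, \qquad k = 1, \ldots, n, \]
using only the ring operations $\oplus$ and $\otimes$. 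These are manifestly invariant under permutations of $A$ and therefore definable over $A$.

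For the converse, the goal is to recover the set $A$ from $(E_1, \ldots, E_n)$ as the zero-set in $S$ of the monic polynomial
\[ P(X) = X^n \oplus (-E_1) \otimes X^{n-1} \oplus \cdots \oplus (-1)^n E_n. \]
Unpacking in the dual-number notation, $P((\beta, b'))$ has first coordinate $\prod_i(\beta - \alpha_i)$, which forces any root to lie in some fiber $\pi^{-1}(\alpha_i)$; evaluating $P(\alpha_i + y\varepsilon)$ then shows that the factor with $j = i$ contributes $(y - a_i')\varepsilon$, vanishing only when $y = a_i'$, while each cross-factor with $j \neq i$ equals $(\alpha_i - \alpha_j) + (\cdots)\varepsilon$ and is therefore a unit of $\mathbb{C}[\varepsilon]/(\varepsilon^2)$. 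Hence the zero-set of $P$ in $S$ is exactly $A$, so any automorphism fixing $(E_1, \ldots, E_n)$ fixes $P$ and hence permutes $A$, as required.

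The main subtlety is that $S$ is not an integral domain and, a priori, a degree-$n$ polynomial over $S$ could acquire spurious roots inside the zero-divisor ideal $\ker\pi$. What rules this out is precisely the injectivity hypothesis granted by Proposition~\ref{P:generalimag}: the $\alpha_i$ being pairwise distinct makes the cross-factors units in the local ring $\mathbb{C}[\varepsilon]/(\varepsilon^2)$, giving a Hensel-style separation of fibers that pins down the roots exactly. Once this calculation is carried out, the corollary follows immediately from Proposition~\ref{P:generalimag}.
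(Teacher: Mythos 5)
Your proof is correct, and it uses the same core construction as the paper: reduce to finite subsets of $S$ with pairwise distinct projections via Proposition~\ref{P:generalimag}, then encode such a set $A$ by the elementary symmetric expressions $E_1,\ldots,E_n$ formed inside the ring $(S,\oplus,\otimes)$. The difference is in the verification that this tuple really is a canonical parameter, and here your route is genuinely distinct from the paper's. The paper unfolds the definition of the $E_k$ coordinatewise and observes that the second coordinates satisfy a linear system whose matrix has determinant $\prod_{i<j}(\alpha_i-\alpha_j)\neq 0$, so the data $(\beta_1,\ldots,\beta_n,b'_1,\ldots,b'_n)$ pins down the unordered set $A$. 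You instead identify $S$ with $\mathbb{C}[\varepsilon]/(\varepsilon^2)$, form the monic polynomial $P(X)=\prod_i(X\ominus a_i)$ whose coefficients are the $\pm E_k$, and show that its zero-set in $S$ is exactly $A$: the first-coordinate constraint forces a root into some fiber $\pi^{-1}(\alpha_i)$, and within that fiber the cross-factors $(\alpha_i-\alpha_j)+(\cdots)\varepsilon$ are units of the local ring, so the $i$-th factor $(y-a'_i)\varepsilon$ must vanish on the nose. This is a slicker, more structural finish; the Vandermonde determinant in the paper and the ``cross-factors are units'' observation in your argument are, of course, two faces of the same fact, but your presentation makes it transparent why injectivity of $\pi$ on $A$ is exactly what is needed, and it avoids writing down the matrix entirely.
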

\begin{proof}
	By Proposition \ref{P:generalimag}, we need only show that every 
	finite 
	subset  
	$\{a_1,\ldots,a_n\}$ of $S$, with pairwise distinct projections 
	$\pi(a_i)=\alpha_i$, has a real canonical parameter. 
	
	  For $1\leq i\leq n$ lift the $i^\text{\,th}$-symmetric function to 
	  $S$:  
	\begin{equation*}
	\tag{$\spadesuit$}
	b_i=\sum_{1\leq j_1<\dots<j_i\leq n}a_{j_1}\otimes\cdots\otimes 
	a_{j_i}. 
	\end{equation*}
     We 
	claim that the tuple $b=(b_1,\ldots,b_n)$ is a canonical parameter of 
	the set 
	$A=\{a_1,\ldots,a_n\}$. If the automorphism
	$\sigma$ permutes $A$, then it clearly fixes $b$. 
	Assume now that $\sigma$ fixes the tuple $b$. Write each element $a_i$ 
	of $A$ as $a_i=(\alpha_i,a_{i}')$, and similarly  
	$b_i=(\beta_i,b_{i}')$.
	 In the full structure 
	$(\mathbb{C},\mathbb{C}\times\mathbb{C})$ the definable condition 
	($\spadesuit$) uniquely translates into 
	\[ \beta_i=\sum_{1\leq j_1<\dots<j_i\leq 
		n}\alpha_{j_1}\cdots\alpha_{j_i}\]
	and the system of linear equations:
	\begin{equation*}
		\begin{pmatrix}
			1 & 1 & \cdots & 1 \\
			\sum\limits_{j\neq 1}\alpha_j & \sum\limits_{j\neq 2}\alpha_j 
			& \cdots & \sum\limits_{j\neq n}\alpha_j \\
			\sum\limits_{\substack{j_1<j_2 \\ j_1,j_2\neq 1}}\alpha_{j_1} 
			\alpha_{j_2} 
			& \sum\limits_{\substack{j_1<j_2 \\ j_1,j_2\neq 
			2}}\alpha_{j_1} \alpha_{j_2} 
			& \cdots 
			& \sum\limits_{\substack{j_1<j_2 \\ j_1,j_2\neq 
			n}}\alpha_{j_1} \alpha_{j_2}  \\
			\vdots & \vdots & \ddots & \vdots \\
			\prod\limits_{j\neq 1}\alpha_j & \prod\limits_{j\neq 
			2}\alpha_j & \cdots & \prod\limits_{j\neq n}\alpha_j
		\end{pmatrix}
		\begin{pmatrix}
		a_1' \\ a_2' \\ \vdots \\ \vdots \\ \vdots \\ a_n'
		\end{pmatrix}
		=
			\begin{pmatrix}
		b_1' \\ b_2' \\ \vdots \\ \vdots \\ \vdots \\ b_n'
		\end{pmatrix}
	\end{equation*}
	Since the tuple $(\beta_1,\ldots,\beta_n)$ encodes the finite set
	$\{\alpha_1,\ldots,\alpha_n\}$ and the above 
	matrix has determinant $\prod_{i<j}(\alpha_i-\alpha_j)\neq 0$, 
	we 
	conclude that the automorphism $\sigma$ permutes the set $A$.	
~\end{proof}

\section{The CBP in additive covers}\label{S:CBP_addcovers}

	As already stated in Remark \ref{R: Cex}, the CBP does not hold in the 
	additive cover $\mathcal{M}_1$ (see Example 
	\ref{E:covers}). For the sake of completeness, we will 
	now sketch a proof, using the
	terminology introduced so far. For generic independent 
	elements $a$, $b$ and $c$ in $S$, set $d=(a \otimes c) \oplus b$. 
	Assuming the CBP, the type
 	$\stp(a/c,d)$ is almost $P$-internal, since $\Cb(c,d/a,b)=(a,b)$. As 
 	the  	elements $a$, $c$ and $d$ are again (generic) 
 	independent, we conclude that the type $\stp(a)$ is almost 
 	$P$-internal,  contradicting the fact that $S$ is not 
 	almost 	$P$-internal.

The above  is a lifting to the sort $S$ of a configuration 
witnessing that the field $P$ is not one-based. We will now present 
another proof that the additive cover $\mathcal{M}_1$ does not have 
the CBP, using the so called group 
version of the CBP, which already appeared in \cite[Theorem 4.1]{KP06}

\begin{fact}\label{F:groupCBP}\textup{(}\cite[Fact 1.3]{HPP13}\textup{)}
	Let $G$ be a definable group in a theory with the CBP. Whenever $a$ is 
	in $G$ and the type $p=\tp(a/A)$ has finite stabilizer, then $p$ is 
	almost internal to the family of all non-locally modular minimal types.
\end{fact}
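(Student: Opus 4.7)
\ The plan is to combine the CBP applied to $p$ with the standard consequences of having a finite stabilizer in a stable group. Setting $c=\Cb(p)$, the CBP directly gives that $\stp(c/a)$ is almost $\p$-internal for any realization $a$ of $p$, and the task is to upgrade this to almost $\p$-internality of $p=\tp(a/A)$ itself.

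The key group-theoretic input is the classical fact that for two independent realizations $a_1,a_2$ of the stationary type $p$, the product $a_1 a_2^{-1}$ is a generic element of the (type-definable) stabilizer $\mathrm{Stab}(p)$. Since this stabilizer is assumed finite, any such generic is algebraic over the parameters defining $\mathrm{Stab}(p)$, which lie in $\acleq(A)$. Iterating along a Morley sequence $(a_i)_{i<\omega}$ of $p$ over $A$, every pair $a_i a_j^{-1}$ lies in $\acleq(A)$, so the entire sequence is contained in the finite coset $\mathrm{Stab}(p)\cdot a_1\subseteq\acleq(A, a_1)$. Consequently $c\in\dcleq(a_1,\ldots,a_n)\subseteq\acleq(a_1)$ for some $n$, and a Lascar rank computation using $a_1\ind_c A$ together with $c\in\acleq(a_1)$ then forces $a_1\in\acleq(A)$. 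Thus $p$ is already algebraic and hence trivially almost $\p$-internal.

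The main obstacle I anticipate is \emph{conceptual} rather than technical: the sketch above makes the CBP hypothesis appear redundant, since finiteness of $\mathrm{Stab}(p)$ alone seems to force algebraicity of $p$ via a pure stable-group argument. I therefore suspect that in the intended formulation \emph{finite stabilizer} is meant in a weaker sense, most plausibly stabilizer finite modulo the $\p$-internal part of $G$ (or up to connected components or similar). Under such a weakened hypothesis the same overall strategy should adapt: one first uses Fact \ref{F:level} to isolate the almost $\p$-internal part $b$ of $a$ inside $G$, then applies the finite-stabilizer reduction to the residual transverse type of $a$ modulo $b$, and finally invokes the CBP non-trivially to conclude that this transverse type is almost $\p$-internal rather than merely algebraic. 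This last step, rather than the group-theoretic reduction, is where I expect the real work to lie.
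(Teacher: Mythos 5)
This statement is a cited Fact (from \cite{HPP13}, Fact 1.3, itself going back to \cite{KP06}, Theorem 4.1); the paper gives no proof of it, so I am judging your argument on its own terms.

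The decisive error is your ``key group-theoretic input.'' It is \emph{not} true that for two independent realizations $a_1,a_2$ of a stationary type $p$ in a stable group, the element $a_1a_2^{-1}$ lies in $\mathrm{Stab}(p)$, let alone generically. That is a special property of (translates of) the generic type of a connected type-definable subgroup --- exactly the situation that a finite non-trivial stabilizer excludes unless $p$ is already algebraic. For a general stationary type, the quotient $a_1a_2^{-1}$ can have large $U$-rank while $\mathrm{Stab}(p)$ is trivial. Concretely, in $\mathrm{ACF}_0$ take $G=(\mathbb{C}^2,+)$ and let $p$ be the generic type of the parabola $\{(t,t^2)\}$ over $\acleq(\emptyset)$: then $\mathrm{Stab}(p)=\{0\}$, but the difference of two independent realizations, $(t_1-t_2,\;t_1^2-t_2^2)$, has $U$-rank $2$. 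So the Morley sequence does not collapse into $\acleq(A,a_1)$, and the inference that $p$ is algebraic is simply false. You correctly sensed that something was wrong (``the CBP appears redundant'' and ``$p$ must be algebraic''), but the remedy is not to weaken the hypothesis: the hypothesis genuinely is finite stabilizer and the conclusion genuinely is almost $\p$-internality, not algebraicity. In this very paper the Fact is applied to $\stp(a,b,a\otimes b)$ in $(S^3,\oplus)$, a type of Morley rank $6$ with trivial stabilizer, and the content is precisely that it would be almost $P$-internal, not algebraic.

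The actual proof uses a translate by a \emph{generic} group element rather than a quotient of two realizations of $p$. Sketch: take $b$ generic in $G$ over $Aa$ and set $d=ba$. Then $\tp(d/Ab)=b\cdot p$, and its canonical base $e=\Cb(d/Ab)$ determines the coset $b\cdot\mathrm{Stab}(p)$; since $\mathrm{Stab}(p)$ is finite, $e$ is interalgebraic with $b$ over $A$, hence $a=b^{-1}d\in\acleq(A,e,d)$. The CBP applied to $\tp(d/Ab)$ gives that $\stp(e/d)$ is almost $\p$-internal, so $\stp(a/Ad)$ is almost $\p$-internal; and since $d\ind_A a$ (as $d$ is generic over $Aa$), this descends to $\stp(a/A)$. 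Note where each hypothesis actually enters: finiteness of the stabilizer is what identifies $e$ with $b$ (so that $a$ becomes algebraic over $A,e,d$), while the CBP --- used essentially, not redundantly --- is what makes $e$ almost $\p$-internal over $d$. This is quite different from, and incompatible with, the ``pure stable-group argument'' you proposed.
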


The failure of the group version of the CBP is another example of such a
lifting approach: Consider two generic independent elements $a$ and $b$ of 
$S$, and set $c=a\otimes b$. It is easy to see that $\stp(a,b,c)$ 
has trivial stabilizer, so the above Fact \ref{F:groupCBP} yields, 
assuming the CBP, that $S$ is almost $P$-internal, which is a 
contradiction. 
 
Now we will see that the additive cover $\mathcal{M}_1$ is already 
determined by its automorphism group over $P$. 

\begin{proposition}\label{P:M1AutVersion}
	If $\mathcal{M}$ is an additive cover such that 
	$\Aut(\mathcal{M}/P)$ corresponds to the group of derivations on 
	$\mathbb{C}$, then the product 
	\[(\alpha,a')\otimes (\beta,b')=(\alpha\beta,\alpha b' + \beta a')\]
	is definable in $\mathcal{M}$.
\end{proposition}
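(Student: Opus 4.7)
The plan is to establish that the graph $\Gamma = \{(x,y,z) \in S^3 : z = x \otimes y\}$ is invariant under all of $\Aut(\mathcal{M})$, and then deduce $\emptyset$-definability from this invariance by a standard stability-theoretic argument.

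To show $\Gamma$ is $\Aut(\mathcal{M}/P)$-invariant, I would invoke the computation at the end of Example \ref{E:covers}: for $\sigma \in \Aut(\mathcal{M}/P)$ with associated additive map $F_\sigma$ and for $x = (\alpha, a')$, $y = (\beta, b')$ in $S$, one has
$$\sigma(x \otimes y) = F_\sigma(\alpha\beta) \star (x \otimes y) \quad \text{while} \quad \sigma(x) \otimes \sigma(y) = \bigl(\alpha F_\sigma(\beta) + \beta F_\sigma(\alpha)\bigr) \star (x \otimes y),$$
and these agree precisely because $F_\sigma$ is a derivation by hypothesis. For an arbitrary $\sigma \in \Aut(\mathcal{M})$, set $\tau = \sigma|_P$ and consider the diagonal lift $\tilde{\tau}$ that acts as $\tau$ on $P$ and by $(\alpha, a') \mapsto (\tau(\alpha), \tau(a'))$ on $S$. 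Because $\mathcal{M}$ is a reduct of $(\mathbb{C}, \mathbb{C}\times\mathbb{C})$, the map $\tilde{\tau}$ is an automorphism of $\mathcal{M}$, and it preserves $\otimes$ tautologically since $\otimes$ is built purely from field operations. Hence $\sigma \circ \tilde{\tau}^{-1}$ lies in $\Aut(\mathcal{M}/P)$ and preserves $\otimes$ by the first step, so $\sigma = (\sigma \circ \tilde{\tau}^{-1}) \circ \tilde{\tau}$ preserves $\otimes$ as well.

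Finally, since every $\sigma \in \Aut(\mathcal{M}/x, y)$ fixes $x \otimes y$ and $\mathcal{M}$ is sufficiently saturated, we have $x \otimes y \in \dcl(x, y)$ for every $(x, y) \in S^2$, so the graph $\Gamma$ has singleton fibers each individually definable over its base point. In the saturated totally transcendental structure $\mathcal{M}$ (see Remark \ref{R:genCov}), standard compactness together with definability of types in stable theories promotes this pointwise $\dcl$-definability to a single $\emptyset$-formula $\phi(u, v, w)$ defining $\otimes$ globally. The \emph{main obstacle} is precisely this last step: one needs to verify that the local formulas witnessing $x \otimes y \in \dcl(x, y)$ on each of the (countably many) $\emptyset$-types in $S_\emptyset(S^2)$ can be amalgamated into a single uniform definition; by contrast, the first two steps merely combine the derivation identity with the routine decomposition of an arbitrary $\sigma \in \Aut(\mathcal{M})$ into a diagonal lift of $\sigma|_P$ and a member of $\Aut(\mathcal{M}/P)$.
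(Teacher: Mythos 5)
Your first step is sound and is in fact a cleaner route to $x\otimes y\in\dcl(x,y)$ than the one in the paper: the paper reaches this (for generic pairs) via a Galois-group computation, a forking argument, and the average trick of Lemma \ref{L:mean}, whereas you read it off directly from the factorization $\sigma=(\sigma\circ\tilde\tau^{-1})\circ\tilde\tau$, using that the diagonal lift $\tilde\tau$ is automatically an automorphism of the reduct $\mathcal{M}$. The genuine gap is the step you yourself flag as the main obstacle. There is no ``standard compactness together with definability of types'' argument that promotes $\Aut(\mathcal{M})$-invariance of the graph plus pointwise $\dcl$ to a single uniform $\emptyset$-formula, and in fact the implication you invoke is false already in the simplest $\omega$-stable settings. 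For a concrete counterexample, let $K$ be a saturated model of $\textnormal{ACF}_0$ and define $f:K\to K$ by $f(a)=a^2$ for $a$ transcendental and $f(a)=a$ for $a$ algebraic. The graph of $f$ is $\Aut(K)$-invariant and $f(a)\in\dcl(a)$ for every $a$, yet $f$ is not definable: by strong minimality the set $\{a:f(a)=a^2\}$ would have to be cofinite, but its complement $\mathbb{Q}^{\textnormal{alg}}\setminus\{0,1\}$ is infinite. The underlying obstruction is precisely what you noticed: for a complete $\emptyset$-type $p$ one obtains a formula $\varphi_p$ that computes $\otimes$ on realizations of $p$, but the set of pairs on which $\varphi_p$ continues to give the \emph{correct} value is not definable (nor type-definable), so the clopen sets $[\exists^{=1}z\,\varphi_p]$ cannot simply be amalgamated by a cover argument.

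The paper closes exactly this gap with structure-specific work rather than general model theory. It produces one explicit formula $\varphi$ and shows, by conjugating with automorphisms induced by derivations and with lifted field automorphisms of $P$, that $\varphi$ computes $\otimes$ on \emph{every} generic independent pair, not just on a single type. It then invokes the germ-composition argument of Marker and Pillay \cite[Fact 1.5]{MP90}: the set $X$ of $a$ for which $\varphi$ works on the whole fiber $\pi^{-1}(\pi(a))$ has cofinite image in $P$ by strong minimality, elements of $S$ over the cofinite set are identified by their germs under $\otimes$, and the finitely many missing fibers are recovered by writing their elements as products of elements of $X$ with fixed parameters. That group-theoretic germ argument, which uses the ring structure and the strong minimality of $P$ in an essential way, is what actually carries the proof from ``definable on generics'' to ``globally definable''; your proposal needs an ingredient of this kind, and without it the final step does not go through.
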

\begin{proof}
	Choose two generic independent elements 
	$\alpha$ and $\beta$ in $P$ and consider the elements 
	$a=(\alpha,0),b=(\beta,0)$ and $c=(\alpha\beta,0)$ 
	in $S$. The type $\tp(a,b,c/\alpha,\beta,\alpha\beta)$ is 
	$P$-internal and stationary, by Lemma \ref{L:stat}. 
	Since every element in its Galois group corresponds to a derivation, 
	we deduce 
	that for all elements
	\[
	\tilde a=(\alpha,a'), \tilde b=(\beta,b') \ \ \textnormal{ and } \ \ 
	\tilde c=(\alpha\beta,c')
	\]
	in $S$, we have that $a,b,c \equiv_{P} \tilde a,\tilde b,\tilde c$ if 
	and only 
	if $c'=\alpha 
	b'+\beta a'$.
	Therefore $c$ is definable over $a,b,P$.
	In fact, we obtain that $c$ is definable over $a, b$: 
	Let $\bar{\gamma}$ be a tuple of elements in $P$ such that $c$ is 
	definable over $a,b,\bar{\gamma}$.
	Now let $\bar{\varepsilon}$ be a maximal subtuple of $\bar{\gamma}$ 
	such that 
	\[\bar{\varepsilon} \ind_{\alpha,\beta}a,b,c.\]
	Note that $\bar{\gamma}\backslash\bar{\varepsilon}$ is algebraic over 
	$\bar{\varepsilon},a,b,c$. Therefore Remark $\ref{R:algFib}$
	implies that 
	$\bar{\gamma}\backslash\bar{\varepsilon}$ is algebraic over 
	$\bar{\varepsilon},\alpha,\beta$. 
	Hence $c$ is definable over $a,b,\bar{\varepsilon}$ and so,
	by independence, we deduce that $c$ is algebraic over $a,b$. 
	The average $(\alpha\beta,e')$ of the finite set consisting of the 
	$\{a,b\}$-conjugates of $c$ is 
	definable over $a,b$. Similarly as in the proof of Lemma \ref{L:stat}, 
	we deduce that 
	$e'$ is definable over $\alpha,\beta$. Hence, 
	\[ c=(-e')\star (\alpha\beta,e') \] 
	is definable over $a,b$.
	
	Let $\varphi(x,y,z)$ be
	a formula such that $c$ is the unique realization of $\varphi(a,b,z)$.
	For two generic independent elements 
	$a_1=(\alpha_1,a_1')$ and $b_1=(\beta_1,b_1')$ in $S$, choose a 
	derivation $D$ with $D(\alpha_1)=-a_1'$ 
	and $D(\beta_1)=-b_1'$ and let $\sigma_D$ be the induced automorphism 
	in $\Aut(\mathcal{M}/P)$. Furthermore, take a
	field automorphism $\tau$ of $P$ with $\tau(\alpha_1)=\alpha$
	and $\tau(\beta_1)=\beta$ and let $\sigma_{\tau}$ be the induced 
	automorphism of the additive cover as in Remark \ref{R:algFib}.
	Since $\sigma_{\tau}(\sigma_{D}(a_1,b_1))=(a,b)$, we deduce that
$\mathcal{M}\models\varphi(a_1,b_1,c_1)$ if and only if 
	$c_1=a_1\otimes b_1$. 
	
	Now we show that the multiplication $\otimes$ is globally definable, 
	following 
	the field version in Marker and Pillay's work \cite[Fact 1.5]{MP90}. 
	Set 
	\[X=\{ a \ | \ \varphi(\varepsilon\star a,b,(\varepsilon \star 
	a)\otimes b) \text{ for generic } b \text{ independent from } a \text{ 
		and every } \varepsilon \text{ in }P \}.\]
	Note that $\pi(X)$ is cofinite and $\pi^{-1}[\pi(a)]$ is contained in 
	$X$ for every $a$ in $X$.
	Note that $a=b$ if and only if they define the same germ, that is 
	$a\otimes c=b\otimes c$ for generic $c$ independent from $a$ and $b$, 
	since generic elements have an inverse.
	%Therefore we can define the multiplication of finitely many elements 
	%in $X$ as the composition of germs.
	Let the finite set $P\backslash \pi(X)=\{\gamma_1,\ldots,\gamma_k\}$. 
	For $1\leq i\leq k$ choose $\alpha_i$ and $\beta_i$ in $\pi(X)$ such 
	that $\gamma_i=\alpha_i\beta_i$.
	Using the elements $(\gamma_i,0),(\alpha_i,0),(\beta_i,0)$ as 
	parameters, we can uniformly identify every element in the fiber of 
	$\gamma_i$ 
	with the product of two elements in $X$, namely
	$(\gamma_i,c')=(\alpha_i,0)\otimes \big(\varepsilon\star 
	(\beta_i,0)\big)$, where 
	$\varepsilon$ is the unique element in $P$ such that 
	$(\varepsilon\alpha_i)\star(\gamma_i,0)=(\gamma_i,c')$. Now we can 
	define the multiplication $\otimes$ globally as the composition of 
	germs of 
	elements in $X$.
	~\end{proof} 
 
We will now show that the CBP holds in the additive cover $\mathcal{M}_0$ 
and more generally whenever there is 
essentially no additional structure on the sort $S$. 

\begin{proposition}\label{P:CBPpureCover}
	The CBP holds in an  additive cover $\mathcal{M}$, whenever every 
	additive map on $\mathbb{C}$ induces an automorphism in 
	$\Aut(\mathcal{M}/P)$. 
\end{proposition}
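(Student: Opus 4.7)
My plan is to verify the CBP via the characterization in Remark \ref{R:CCBP}: show that whenever $b=\Cb(\tp(a/b))$, the type $\tp(b/\acleq(a)\cap\acleq(b))$ is almost $P$-internal. After naming parameters via Corollary \ref{C:NamingParametersIntersections}, I may reduce to the case $\acleq(a)\cap\acleq(b)=\acleq(\emptyset)$ and aim to show $\tp(b)$ itself is almost $P$-internal. By superstability, $b$ is algebraic over finitely many $b$-conjugates $a_1,\ldots,a_n$ of $a$, and by Fact \ref{F:level}(a) the type $\stp(b)$ is already $P$-analysable, so the remaining task is to upgrade analysability to almost internality.

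The structural heart of the argument exploits the hypothesis that $\Aut(\mathcal{M}/P)$ is the full group of additive maps on $\mathbb{C}$. Under this hypothesis, every definable relation on $S^n$ over a set of parameters is, up to conditions on the projections $\pi(x_i)$ in the field sort $P$, an affine-linear system in the fiber coordinates with coefficients from $P$; this is the principle underlying both the proof of Proposition \ref{P:M1AutVersion} and the Galois-theoretic description in Remark \ref{R:Gal}, which identifies every Galois group of a stationary $P$-internal type with a definable additive subgroup of some $(\mathbb{C}^n,+)$. The reason is that the definable data must be invariant under the huge group $\{F:\mathbb{C}\to\mathbb{C}\text{ additive}\}$ of automorphisms of $\mathcal{M}/P$, which is rich enough---by the arbitrary choice of $F$ on a Hamel basis of $\mathbb{C}$ over $\mathbb{Q}$---to force genuine $S$-parameters out of any invariant object.

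Applying this to $b=\Cb(\tp(a/b))$, I would use the configuration $(a_1,\ldots,a_n)$ to express $b$ as an element of $\acleq(a_1,\ldots,a_n, P)$: the affine-linear relations defining $b$ from the $a_i$ have coefficients in $P$. Exploiting that the $a_i$ are $b$-conjugates, one reverses the usual direction and concludes that $b$ is interdefinable, over $a$, with a $P$-tuple, so that $\stp(b/a)$ is almost $P$-internal. A forking argument analogous to the one in the proof of Proposition \ref{P:propA}, together with the hypothesis $\acleq(a)\cap\acleq(b)=\acleq(\emptyset)$ and Fact \ref{F:level}(b), then forces $\tp(b)$ itself to be almost $P$-internal.

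The main obstacle is making the ``affine-linear normal form'' rigorous: turning the abundance of additive automorphisms into a precise assertion that the defining data of $b$ over the $a_i$ cannot carry genuine $S$-information beyond what is already coded by $P$. This is precisely where the hypothesis bites---as the counterexample $\mathcal{M}_1$ of \cite{HPP13} shows, when only derivations (and not all additive maps) induce automorphisms, genuine fiber information can enter canonical bases through operations such as the ring multiplication $\otimes$, and the CBP fails.
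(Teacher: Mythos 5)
Your proposal identifies the right phenomenon (the abundance of additive automorphisms forces canonical bases to be coded in $P$) but leaves the hard step unproven, and your reduction route introduces difficulties the paper deliberately avoids. Concretely, the ``affine-linear normal form'' you invoke --- that every definable relation on $S^n$ over parameters is, modulo the $\pi$-projections, an affine-linear system in the fiber coordinates --- is nowhere established; it is not what Proposition \ref{P:M1AutVersion} proves (that concerns definability of $\otimes$ in the derivation case) nor what Remark \ref{R:Gal} proves (that concerns only Galois groups of fundamental $P$-internal types, a much more restricted class of objects). You flag this yourself as ``the main obstacle,'' but without making it precise, the proof does not exist. Moreover, your argument works directly with $b=\Cb(\tp(a/b))$, which is in general an imaginary element, and the $b$-conjugates $a_1,\ldots,a_n$ of $a$ may likewise be imaginary; the ``fiber coordinates'' on which the affine-linear heuristic rests are only available for tuples from $S$. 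The paper sidesteps this exactly by first reducing (via \cite[Lemma 2.11]{BMPW12}) to \emph{real} stationary types over a model $N$, and then arguing at the level of a least-rank formula $\varphi(x;\bar b,\bar\gamma)\in p$ with $\bar b$ from $S\cap N$ and $\bar\gamma$ from $P\cap N$: one shows that any $\sigma\in\Aut(\mathcal M/P,\bar a)$, necessarily of the form $\sigma(\bar b)=F_\sigma(\pi(\bar b))\star\bar b$, carries $\bar b$ to another realization of the defining schema $\mathrm{d}_p x\,\varphi$, using a $\mathbb{Q}$-linear basis argument on the projections $\pi(\bar b)$ over $\pi(\bar a)$ and the freedom to prescribe an additive map on that basis. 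This gives the stronger conclusion (Remark \ref{R:rsCBP}) that $\Cb(p)$ is in fact \emph{definable} over $\bar a,P$.

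There is also a logical slip at the end of your outline: from ``$\stp(b/a)$ is almost $P$-internal'' together with $\acleq(a)\cap\acleq(b)=\acleq(\emptyset)$ you claim ``a forking argument \ldots forces $\tp(b)$ itself to be almost $P$-internal.'' That inference is precisely preservation of internality on intersections, which by Proposition \ref{P:propA} is \emph{equivalent} to the goodness of all stationary almost $\p$-internal types (a consequence of the CBP, not a free tool); assuming it here is circular. In fact, if you have genuinely shown $\stp(b/a)$ is almost $P$-internal where $b=\Cb(a/b)$, you are already done with the CBP for $\tp(a/b)$ by definition --- the detour through Remark \ref{R:CCBP} and the intersection hypothesis is unnecessary and is where the gap in the deduction appears.
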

In particular, the additive cover $\mathcal{M}_0$ has the CBP.  
\begin{proof}
	Recall that we need only consider real types over models in order to 
	deduce that the CBP holds.
	Let $p(x)$ be the type of some finite real tuple $\bar{a}$ of length 
	$k$ 
	over an elementary substructure $N$. In order to show that
 the type 
	$\stp(\Cb(p)/\bar{a})$ is almost
	$P$-internal, choose a formula $\varphi(x;\bar{b},\bar{\gamma})$ in 
	$p$ of least Morley rank and Morley degree one, where $\bar{b}$ is a 
	tuple of elements in $S\cap N$ and $\bar{\gamma}$ is a tuple of 
	elements 
	in 
	$P \cap N$. 
	
	We claim that every automorphism in 
	$\Aut(\mathcal{M}/P,\bar{a})$ fixes the canonical base $\Cb(p)$, which 
	is (interdefinable with) the canonical parameter $\ulcorner 
	\textnormal{d}_{p} x 
	\varphi(x;y)\urcorner$. For 	this, it suffices to show that every 
	such automorphism sends the tuple 
	$\bar{b}$ to another realization of the formula 
	$\textnormal{d}_{p} x \varphi(x; y_1, \gamma)$.
	
	Write $\bar{a}=(a_1,\ldots,a_k)$ and \[ \alpha_i=\begin{cases}
\pi(a_i),  \text{ if $a_i$ is in $S$}\\ 
a_i \text{ otherwise.} 
	\end{cases} \] For  
	$\bar{b}=(b_1,\ldots,b_n)$, set 
	$\beta_i=\pi(b_i)$. 
  We may assume (after possibly reordering) that 
	$(\beta_1,\ldots,\beta_m)$ is a maximal subtuple of $\bar{\beta}$ 
	which 
	is $\mathbb{Q}$-linearly independent over $\bar{\alpha}$. So,
	\[ \beta_j = \sum\limits_{i=1}^{m}q_i\cdot\beta_i + 
	\sum\limits_{i=1}^{k}r_i\cdot\alpha_i \]
	for $m+1\leq j\leq n$ and rational numbers $q_i$ and $r_i$. In order 
	to show that $\bar{b}$ is mapped by the automorphism 
	$\sigma$ of $\Aut(\mathcal{M}/P,\bar{a})$ to another realization of 
	the formula 
	$\textnormal{d}_{p} x \varphi(x;y_1, \gamma)$, it suffices to show 
	that 
	\[ N\models \forall \varepsilon_1,\ldots,\varepsilon_m \in P\ \ 
	\textnormal{d}_{p} x \varphi(x;\bar{\varepsilon}\star\bar{b},\gamma)   
	\]
	 where $\bar{\varepsilon}=(\varepsilon_1,\ldots,\varepsilon_n)$ with
	 \[ \varepsilon_j =  \sum\limits_{i=1}^{m}q_i\cdot\varepsilon_i  \]
	 for $m+1\leq j\leq n$. Indeed: since $N$ is an elementary 
	 substructure of $\mathcal{M}$, the above implies that 	 \[ 
	 \mathcal{M}\models \forall \varepsilon_1,\ldots,\varepsilon_m \in 
	 P\ \  \textnormal{d}_{p} x 
	 \varphi(x,\bar{\varepsilon}\star\bar{b},\bar{\gamma}), \] so 
	 $\sigma(\bar b)= F_\sigma(\bar b)\star \bar b$ realizes  
	 $\textnormal{d}_{p} x \varphi(x;\bar y_1,\gamma)$, as desired.

	 So, let $\varepsilon_1,\ldots,\varepsilon_m$ be in $P\cap N$ and set
	 $\varepsilon_j =  \sum_{i=1}^{m}q_i\cdot\varepsilon_i 
	 $ for $m+1\leq j\leq n$. Choose an additive 
	 map $G$ vanishing on $\alpha_i$ for $1\leq i\leq k$ and with
	 $F(\beta_i)=\varepsilon_i$ for $1\leq i \leq m$. Hence \[G(\beta_j)= 
	 \sum\limits_{i=1}^{m}q_i\cdot\varepsilon_i , \]
	 so  the image of $\bar b$ under the automorphism $\sigma_G$ induced 
	 by $F$ lies in $N$. 
	 Hence $\sigma_{G}(\bar{b})=\bar \varepsilon \star \bar b$ realizes  
	 $\textnormal{d}_{p} x \varphi(x,y_1,\gamma)$ since 
	 $\sigma_{G}(\bar{a})=\bar{a}$, as desired. 
~\end{proof}

\begin{remark}\label{R:rsCBP}
	The above proof shows that the canonical base of a real stationary 
	type $\stp(a/B)$ is definable over $a,P$ which is stronger than
	$P$-internality.
	As we will see below this does not hold for all 
	imaginary types. 
\end{remark}

Palac\'in and Pillay ~\cite{PP17} considered a strengthening of the CBP, 
called the \textit{strong canonical base property}, which we reformulate 
in the setting of additive covers: Given a (possibly imaginary) type 
$p=\stp(a/B)$, 
its canonical base $\Cb(p)$ is algebraic over $a, \bar d$, where 
$\stp(\bar d)$ is $P$-internal. If we denote by $\mathcal{Q}$ the 
family 
types over $\acleq(\emptyset)$ which are $P$-internal,  then the 
strong CBP holds if and only if  every Galois group $G$ relative to 
$\mathcal{Q}$ is \emph{rigid} \cite[Theorem 3.4]{PP17}, that is, the 
connected component of every 
definable subgroup of $G$ is definable over $\acl(\ulcorner G\urcorner)$.

Notice that no additive cover where the sort $S$ is not almost 
$P$-internal can have the strong CBP: 
For the two generic independent elements $a=(\alpha,0)$ and $b=(\beta,0)$ 
in $S$, the stationary 
$P$-internal type $\tp(a,b/\alpha,\beta)$ is fundamental and has Galois 
group 
$(\mathbb{C}^2,+)$. This is clearly a $\mathcal{Q}$-internal type whose 
Galois group $G$ (relative to $\mathcal{Q}$) is a definable subgroup of 
$(\mathbb{C}^2,+)$. Since vector groups are never rigid, it suffices to 
show that $G=\mathbb{C}^2$ (compare to \cite[Proposition 4.9]{JJP20}). 
Otherwise, 
the 
element 
$b$ is algebraic over $a,\bar{d}$, where $\stp(\bar d)$ belongs to 
$\mathcal Q$ (up to permutation of $a$ and $b$). Hence, the type 
$\stp(b/a)$, and thus $S$, is almost $P$-internal.

The question whether a  Galois-theoretic interpretation
of the CBP exists arose in \cite{PP17}. We conclude this section by 
showing that 
no \textit{pure} Galois-theoretic  account of the CBP can be provided.
We already noticed in Remark \ref{R:Gal} that, whenever the sort $S$ in an 
additive cover is not almost $P$-internal, then the Galois groups relative 
to $P$ are precisely all definable subgroups of $(\mathbb{C}^n,+)$, as $n$ 
varies. In particular, all such additive covers share the same Galois 
groups 
(relative to $P$). We will now see that the same holds for the Galois 
groups relative to $\mathcal{Q}$.

\begin{lemma}\label{L:GalAccount}
	All additive covers where the sort $S$ is 
	not almost $P$-internal share  
	the same Galois groups relative to $\mathcal{Q}$. 
\end{lemma}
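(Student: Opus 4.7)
The idea is to identify, in any additive cover $\mathcal{M}$ where the sort $S$ is not almost $P$-internal, the class of Galois groups relative to $\mathcal{Q}$ with a fixed collection depending only on the pure field $\mathbb{C}$, namely the definable subgroups of $(\mathbb{C}^n,+)$ for $n\ge 0$. Once such a characterization is available, the lemma is immediate, since this class is the same in every such additive cover.

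For the \emph{upper bound}, I would first show that for any $\mathcal{Q}$-internal stationary type $p$, the Galois group $\mathrm{Gal}_\mathcal{Q}(p)$ is a definable subgroup of $(\mathbb{C}^n,+)$. Since every $\alpha\in P$ realizes the (trivially) $P$-internal type $\tp(\alpha/\emptyset)$, the set $P$ is contained in $\mathcal{Q}(\mathfrak{C})$. Therefore every automorphism that fixes $\mathcal{Q}(\mathfrak{C})$ pointwise also fixes $P$ pointwise, and the induced map on the permutation group of $p(\mathfrak{C})$ factors through $\mathrm{Gal}_P(p)$. By Remark~\ref{R:Gal}, $\mathrm{Gal}_P(p)$ is a definable subgroup of $(\mathbb{C}^n,+)$, so $\mathrm{Gal}_\mathcal{Q}(p)\le \mathrm{Gal}_P(p)$ is one too.

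For the \emph{lower bound}, I would show that every definable subgroup $H$ of $(\mathbb{C}^n,+)$ arises as a Galois group relative to $\mathcal{Q}$ in any such $\mathcal{M}$. Following the construction in Remark~\ref{R:Gal}, fix $a_i=(\alpha_i,0)\in S$ with $\alpha_1,\ldots,\alpha_n\in P$ generic independent and let
\[ E = \{\bar x \in S^n \ | \ \exists \bar g\in H \bigwedge_{i=1}^n g_i\star a_i = x_i\}. \]
Then $p=\tp(\bar a/\ulcorner E\urcorner)$ is $P$-internal with $\mathrm{Gal}_P(p)=H$. To conclude $\mathrm{Gal}_\mathcal{Q}(p)=H$, by the upper bound it is enough to realize every $\bar g\in H$ by some $\sigma\in \Aut(\mathfrak{C}/\mathcal{Q}(\mathfrak{C}),\ulcorner E\urcorner)$ with $\sigma(\bar a)=\bar g\star\bar a$. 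For this I would use a saturation/amalgamation argument: any $\mathcal{Q}$-realization $c$ satisfies that $\tp(c/\acleq(\emptyset))$ is $P$-internal, so up to a generic conjugate $c$ lies in $\dcleq(C,F)$ for some $F\subseteq P$ with $c\ind C$, and this forces the action of any $\sigma\in\Aut(\mathcal{M}/P)$ on $c$ to be controlled by finite $P$-data that is independent from $\bar a$.

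The main obstacle is precisely this amalgamation step, since the content of $\mathcal{Q}(\mathfrak{C})$ depends a priori on the full structure of $\mathcal{M}$ and may include nontrivial elements of $S$. The crucial freedom comes from Remark~\ref{R:algFib}: every field automorphism $\tau$ of $P$ fixing a given subset lifts to an automorphism of $\mathcal{M}$ fixing $P$ pointwise on the chosen zero-sections. Combined with the genericity of the $\alpha_i$ and independence of $\bar a$ from any finite tuple of $\mathcal{Q}$-realizations (outside of the explicit $\bar g\star$-orbit of $\bar a$), this lets one assemble, for each $\bar g\in H$, an automorphism $\sigma$ moving $\bar a$ by $\bar g$ while fixing any prescribed finite set of $\mathcal{Q}$-realizations, and then extend by saturation. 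Once both inclusions are secured, the class of Galois groups relative to $\mathcal{Q}$ is pinned down as the definable subgroups of $(\mathbb{C}^n,+)$, a class intrinsic to $\mathbb{C}$ and independent of the particular additive cover, proving the lemma.
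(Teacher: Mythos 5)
Your high-level plan matches the paper's: characterize the Galois groups relative to $\mathcal{Q}$ as exactly the definable subgroups of $(\mathbb{C}^n,+)$, get the upper bound from $\mathrm{Gal}_\mathcal{Q}(p)\le\mathrm{Gal}_P(p)$ together with Remark~\ref{R:Gal}, and for the lower bound reuse the set $E$ from Remark~\ref{R:Gal} to realize a prescribed $H\le(\mathbb{C}^n,+)$ as $\mathrm{Gal}_P(\stp(\bar a/\ulcorner E\urcorner))$. Up to this point you and the paper agree.

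The genuine gap is in the step you yourself flag as ``the main obstacle.'' The independence claim on which your amalgamation rests --- that $\bar a$ is independent from any finite tuple of $\mathcal{Q}$-realizations outside the $\bar g\star$-orbit --- is simply false: $\bar\alpha=\pi(\bar a)$ and $\ulcorner E\urcorner$ are themselves $\mathcal{Q}$-realizations on which $\bar a$ depends heavily, and a priori there could be further $\mathcal{Q}$-realizations $\bar d$ with $\stp(\bar d)$ $P$-internal that make some $a_k$ algebraic over $\bar a_{>k},\ulcorner E\urcorner,\bar d$. Exhibiting the desired automorphism $\sigma$ then requires ruling out exactly such $\bar d$, and nothing in your sketch does this. (You also misquote Remark~\ref{R:algFib}: the automorphism it produces fixes only the zero-sections over the named field elements, not $P$ pointwise, so it is not immediately usable in the way you suggest.) The paper closes this gap differently: it supposes $\mathrm{Gal}_\mathcal{Q}$ is a proper subgroup of $G$, uses the echelon form of the linear system defining the two groups to isolate one coordinate $a_k$ and a $\mathcal{Q}$-tuple $\bar d$ with $a_k\in\acl(\bar a_{>k},\ulcorner E\urcorner,\bar d)$ but $a_k\notin\acl(\bar a_{>k},\ulcorner E\urcorner)$, then uses $P$-internality of $\stp(\bar d)$ to trade $\bar d$ for $C\cup P$ with $C\ind\bar d,\bar a,\ulcorner E\urcorner$, and finally invokes the fundamentality of $\stp(\bar a/\ulcorner E\urcorner)$ --- concretely, $\stp(\bar a/\ulcorner E\urcorner,B)\vdash\tp(\bar a/\ulcorner E\urcorner,B,P)$ for all $B$ --- to strip off $P$ and, by the independence, also $C$, reaching a contradiction. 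This fundamentality/implication property and the echelon-form reduction are the missing ingredients your proposal would need to be completed.
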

\begin{proof}
	Note that $\mathcal{Q}$-internality coincides with $P$-internality. 
	Moreover, 
	the Galois group relative to $\mathcal{Q}$ is a subgroup of the Galois 
	group relative to $P$, which by Remark \ref{R:Gal} is a definable 
	subgroup of some $(\mathbb{C}^n,+)$. So it suffices to show that 
	every definable subgroup $G$ of $(\mathbb{C}^n,+)$ 
	appears as a Galois group relative to $\mathcal{Q}$.
	 
	Choose a tuple $\bar{a}$ of elements 
	$a_1=(\alpha_1,0),\ldots,a_n=(\alpha_n,0)$ in the sort 
	$S$ with generic independent elements $\alpha_i$ in $P$ and
	set
	\[ E = \{ \bar{x}\in S^n \ |\ \exists \bar{g}\in G \bigwedge_{i=1}^{n} 
	g_i \star a_i = x_i    \}.  \]
	The proof of Remark \ref{R:Gal} shows that the stationary type 
	$\stp(\bar a/\ulcorner E\urcorner)$ 
	is $P$-internal and fundamental with Galois group $G$. Moreover, for 
	every set $B$ of parameters we have that  \[\stp(\bar a/\ulcorner 
	E\urcorner, B) \vdash 
	\tp(\bar a/\ulcorner E\urcorner,B,P).\]

	We now show that the Galois group $H$ relative to $\mathcal{Q}$ equals 
	$G$. Assume for a contradiction that $H$ is a 
	proper subgroup of $G$. The group $G$ (and $H$ relative to $G$) is 
	given by a system of linear equations in echelon form, so we find an 
	index $1\le k\le n$ and a tuple $\bar d$ with $\stp(\bar d)$  
	$P$-internal such that the element $a_k$ is not algebraic over
		$\bar{a}_{>k},\ulcorner E\urcorner$, yet it is algebraic over 
		$\bar{a}_{>k},\ulcorner 
		E\urcorner,\bar d$.
		
	By $\mathcal{P}$-internality of $\stp(\bar d)$, there is a set of 
	parameters $C$ with $C \ind \bar{d},\bar{a},\ulcorner E\urcorner$ such 
	that $\bar d$ is 
	definable over $C,P$.
	The above yields that $a_k$ is algebraic over  $\bar{a}_{>k},\ulcorner 
	E\urcorner,C,P$ and therefore over $\bar{a}_{>k},\ulcorner 
	E\urcorner$, which yields the desired contradiction.
\end{proof}

\section{Preservation of internality in additive 
covers}\label{S:PI_addcovers}

In this section we will show that the additive cover $\mathcal{M}_1$ does 
not preserve 
internality on intersections nor internality on quotients. We will start 
with the latter, whose proof is considerably simpler.

\begin{proposition}\label{P: M1PropB}
	The additive cover $\mathcal{M}_1$ does not preserve internality on 
	quotients.
\end{proposition}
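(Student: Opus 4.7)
The plan is to obtain Proposition \ref{P: M1PropB} as an immediate corollary of results already established in the paper, rather than via a direct construction. Specifically, Corollary \ref{C:finiteImag} establishes that the additive cover $\mathcal{M}_1$ eliminates finite imaginaries (the symmetric functions on $P$ are lifted to the sort $S$ through the ring operations $\oplus$ and $\otimes$), and Theorem \ref{T:finImagCBP} establishes that, under the standing hypothesis that the sort $S$ is not almost $P$-internal, any additive cover which eliminates finite imaginaries must fail to preserve internality on quotients.

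The additional requirement that $S$ is not almost $P$-internal holds for $\mathcal{M}_1$ by \cite[Corollary 3.3]{HPP13}, as recalled in Remark \ref{R: Cex} (every derivation on $\mathbb{C}$ lifts to an element of $\Aut(\mathcal{M}_1/P)$, and this is precisely the obstruction to almost internality of $S$). Consequently, my proof would simply combine the two cited results: by Corollary \ref{C:finiteImag} the additive cover $\mathcal{M}_1$ eliminates finite imaginaries, and so by Theorem \ref{T:finImagCBP} it does not preserve internality on quotients.

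The real model-theoretic content (namely, the mechanism by which elimination of finite imaginaries forces some almost $P$-internal type to fail to be special, which through Proposition \ref{P:propB} yields the failure of preservation of internality on quotients) has already been packaged into Theorem \ref{T:finImagCBP} during the analysis of imaginaries in Section \ref{S:Imag}. There is therefore essentially no obstacle to this short proof; this is precisely why the paper notes that Proposition \ref{P: M1PropB} is ``considerably simpler'' than the sibling Proposition \ref{P: M1PropA}, which will require a genuine construction in order to exhibit a concrete failure of preservation of internality on intersections in $\mathcal{M}_1$.
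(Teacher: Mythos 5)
Your proposal has a circularity problem rooted in the paper's actual structure. Theorem \ref{T:finImagCBP} appears \emph{after} Proposition \ref{P: M1PropB} in Section \ref{S:PI_addcovers}, not in Section \ref{S:Imag} as you assert, and its proof explicitly refers back to the argument of Proposition \ref{P: M1PropB}: it says ``As in Proposition \ref{P: M1PropB}, with the definable set \dots\ we can easily prove that the types \dots\ are $P$-internal.'' So in the paper as written, Theorem \ref{T:finImagCBP} is not an ``already established'' result on which Proposition \ref{P: M1PropB} may stand; it is a generalization whose proof leans on the technique first worked out in the proof of Proposition \ref{P: M1PropB}. Invoking it here reverses the logical order.

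You could of course rewrite the proof of Theorem \ref{T:finImagCBP} to be self-contained, and then your two-line deduction would be valid. But that does not eliminate the model-theoretic work: the concrete construction that the paper carries out directly for $\mathcal{M}_1$ — picking generic independent $a,b,c$ in $S$, setting $d=(a\otimes c)\oplus b$, forming the definable set $E$, and showing that $\stp(\ulcorner E\urcorner)$ is $P$-internal over each of $(\alpha,\beta)$ and $(\gamma,\delta)$ separately yet not globally — would simply be relocated inside the proof of Theorem \ref{T:finImagCBP}. Your reading of the phrase ``considerably simpler'' is also off: the paper is comparing the \emph{direct} proof of Proposition \ref{P: M1PropB} to the direct proof of Proposition \ref{P: M1PropA} (which needs the more delicate linear-algebraic computation), not suggesting that the former is a free corollary of earlier material. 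The honest content of Proposition \ref{P: M1PropB} is the explicit construction lifting a witness to non-one-basedness of $P$ into $S$ via $\otimes$, and that construction cannot be skipped.
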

\begin{proof}
	Choose generic independent elements $a,b$ and $c$ in $S$ and set 
	$d=(a\otimes c)\oplus b$. 
	Consider now the following definable set:
	\begin{align*} 
	E &= \{ (x,y)\in S^2 \ |\ \pi(x)=\pi(a) \ \& \   \pi(y)=\pi(b) \ \& \ 
	d=(x\otimes c)\oplus y \} 
	\end{align*}
	Since  the canonical parameter
	$\ulcorner E\urcorner$ is clearly definable over $c,d,\pi(a),\pi(b)$ 
	and the type $\stp(c,d,\pi(a),\pi(b)/\pi(c),\pi(d))$
	is $P$-internal, 
	we deduce that the type \[ \stp(\ulcorner E\urcorner/\pi(c),\pi(d))\] 
	is $P$-internal. 
	\begin{claim*}
		The type $\stp(\ulcorner E\urcorner / \pi(a),\pi(b))$ is 
		$P$-internal.
	\end{claim*}
	\begin{claimproof*}
		Choose elements $a_1$ and $b_1$ in the fiber of $\pi(a)$, resp. 
		$\pi(b)$, such that
		\[ a_1,b_1 \ind_{\pi(a),\pi(b)} \ulcorner E\urcorner.  \]
		Note that every automorphism $\sigma$ in $\Aut(\mathcal{M}_1/P)$ 
		fixing the elements $a_1$ and $b_1$ must fix 
		$\pi^{-1}(\pi(a))\times\pi^{-1}(\pi(b))$, so $\sigma$
		permutes $E$. In particular, the canonical parameter $ \ulcorner 
		E\urcorner$ is definable over $a_1,b_1,P$, as desired.
	\end{claimproof*}
	We assume now that $\mathcal M_1$ preserves internality on 
	quotients in order to 
	reach a 
	contradiction. Since 
	\[\acleq\big(\pi(a),\pi(b)\big)\cap\acleq\big(\pi(c),\pi(d)\big)=\acleq(\emptyset),\]
	we deduce
	that the type $\stp(\ulcorner E\urcorner)$
	is almost $P$-internal.
	Therefore there is a real subset $C$ of $S$ with
	$C \ind \ulcorner E\urcorner$ such that the canonical parameter 
	$\ulcorner E\urcorner$ is algebraic over $C,P$. Note that in 
	particular 
	\[\pi(C),\pi(a) \ind \pi(b).\]
	Choose now a derivation $D$ vanishing both on $\pi(C)$ and on 
	$\pi(a)$ with
	$D(\pi(b))=1$. The induced automorphism $\sigma_D$ fixes $C$ and $P$ 
	pointwise but $\ulcorner E\urcorner$ has an infinite orbit, yielding 
	the desired contradiction.
	~\end{proof}
\begin{remark}
	The previous set is definable in every additive cover, since $E$ equals
	\[
	 \{ (x,y)\in S^2 \ |\ \exists\big(\lambda,\mu)\in P^2 ( \lambda\star 
		 a = x  \ \& \  \mu\star b= y \ \& \  \lambda \cdot \pi(c)+\mu=0 
		 \big)  \}. 
	\] 
	The main cause for the failure of preservation of internality on 
	quotients is that $E$ is definable over $c,d,P$ in $\mathcal{M}_1$.
\end{remark}

\begin{proposition}\label{P: M1PropA}
	 The additive cover $\mathcal{M}_1$ does not preserve internality on 
	 intersections.
\end{proposition}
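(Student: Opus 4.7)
The plan is to invoke Proposition~\ref{P:propA} and exhibit a stationary almost $P$-internal type in $\mathcal{M}_1^{\textnormal{eq}}$ that is not good. I will reuse the CBP-failure configuration from the start of Section~\ref{S:CBP_addcovers}: take generic independent $a,b,c$ in $S$ and set $d=(a\otimes c)\oplus b$, so that $(a,c,d)$ is again generic independent and the canonical base $\Cb(c,d/a,b)$ is interdefinable with $(a,b)$. The type $\tp(c,d/a,b)$ is not almost $P$-internal, so the strategy is to enrich the parameter set just enough to restore $P$-internality without enlarging the canonical base.

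Set $\gamma=\pi(c)$, and, by saturation, pick $c_0\in\pi^{-1}(\gamma)$ with $c_0\ind_{\gamma}(a,b,c,d)$; in particular $c_0\ind_{a,b,\gamma}(c,d)$. Let $p=\stp(c,d/a,b,c_0)$. I would verify in turn: $p$ is stationary, since the fiber $\pi^{-1}(\gamma)$ is strongly minimal by Remark~\ref{R:genCov} and $d\in\dcl(a,b,c)$; $p$ is $P$-internal, because $c=\varepsilon\star c_0$ for a unique $\varepsilon\in P$, so $(c,d)\in\dcl(a,b,c_0,\varepsilon)$; and $\Cb(p)$ is interdefinable with $(a,b,\gamma)$, because the independence above makes $p$ the non-forking extension of $\stp(c,d/a,b,\gamma)$, which is defined by the formulas $\pi(c)=\gamma$ and $d=(a\otimes c)\oplus b$, and the five $P$-coordinates of $(a,b,\gamma)$ are algebraically independent over $\emptyset$ so cannot be further compressed.

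Goodness of $p$ is then straightforward to refute. At the realization $(c,d)$, since $\gamma=\pi(c)\in\dcl(c)$, one has
\[\stp(\Cb(p)/c,d)=\stp(a,b/c,d),\]
whose projection onto $a$ is $\stp(a/c,d)=\stp(a)$ by the generic independence of $a,c,d$; this is the generic type of $S$, which is not almost $P$-internal by the standing hypothesis on $\mathcal{M}_1$. Hence $p$ is a stationary almost $P$-internal type that is not good, and Proposition~\ref{P:propA} delivers that $\mathcal{M}_1$ does not preserve internality on intersections.

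The main obstacle is the canonical base computation in the third verification: the auxiliary witness $c_0$ is introduced precisely to certify $P$-internality of $p$, but one must simultaneously ensure that $c_0$ is absorbed into the non-forking extension, so that $\Cb(p)$ remains interdefinable with $(a,b,\gamma)$ rather than acquiring $c_0$ as extra data. Once this is secured, the failure of goodness reduces to the already familiar fact that $a$ is generic independent from $(c,d)$ in the CBP-failure configuration.
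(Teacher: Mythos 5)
There is a fatal gap in the third verification, and it is precisely the one you flagged as the main obstacle. The canonical base $\Cb(p)$ is \emph{not} interdefinable with $(a,b,\gamma)$. Writing $\alpha=\pi(a)$, $\beta=\pi(b)$, the type $p$ (equivalently its non-forking restriction $q=\stp(c,d/a,b,\gamma)$) is the generic type of the strongly minimal set $X=\{(x,y)\in S^2 : \pi(x)=\gamma,\ y=(a\otimes x)\oplus b\}$, so $\Cb(p)$ is interdefinable with $\ulcorner X\urcorner$. Once $\pi(x)$ is pinned to $\gamma$, the affine map $x\mapsto (a\otimes x)\oplus b$ no longer determines $(a,b)$: a short computation shows $X_{a,b,\gamma}=X_{a_1,b_1,\gamma}$ (with $\pi(a_1)=\alpha$, $\pi(b_1)=\beta$) exactly when $\gamma a'+b'=\gamma a_1'+b_1'$ on second coordinates, so $\ulcorner X\urcorner$ is moved by \emph{no} derivation $F$ with $\gamma F(\alpha)+F(\beta)=0$, while any such $F$ with $F(\alpha)\neq 0$ moves $a$ to an arbitrary point of its fiber. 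Hence $a\notin\acleq(\ulcorner X\urcorner)$, the ``cannot be further compressed'' step fails, and the reduction to $\stp(a/c,d)$ collapses. Worse, $X$ admits the explicit parametrization $X=\{(\lambda\star c,(\alpha\lambda)\star d):\lambda\in P\}$, so $\ulcorner X\urcorner\in\dcl(c,d,\alpha)$; thus $\stp(\Cb(p)/c,d)$ is outright $P$-internal and $p$ \emph{is} good, the opposite of what you need.

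This is not an accident that can be patched by a cleverer choice of $c_0$: by Remark~\ref{R:realpart}, every real almost $P$-internal type in an additive cover is special, hence good, so no real type such as your $p=\stp(c,d/a,b,c_0)$ can witness the failure of goodness. The failure has to be found among \emph{imaginary} types, and that is exactly why the paper's proof takes the canonical parameter $\ulcorner E\urcorner$ of a definable subset of $S^2$ as the object under study, shows $\stp(\ulcorner E\urcorner/\bar\beta)$ is $P$-internal for suitable real $\bar\beta$ with $\acleq(\ulcorner E\urcorner)\cap\acleq(\bar\beta)=\acleq(\emptyset)$, and then derives the contradiction from $\stp(\ulcorner E\urcorner)$ not being almost $P$-internal. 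Your approach would need to be re-centered on an imaginary of that kind rather than on a real configuration.
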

\begin{proof}
	Choose generic independent elements $a_1$ and $a_2$ in $S$ and 
	$\varepsilon$ in $P$ generic over $a_1, a_2$. Set 
	$\bar\alpha=(\alpha_1,\alpha_2)=(\pi(a_1),\pi(a_2))$. 
	Consider now the definable set
	\[ E= \{ (x,y)\in S^2 \ |\ \exists(\lambda,\mu)\in P^2 ( \lambda\star 
	a=x \ \& \ 
	\mu\star b=y \ \& \ \varepsilon\cdot\lambda+\mu=0 ) \}.  \]
	Choose $\beta_1$ in $P$ generic over $
	\ulcorner 
	E\urcorner, \bar\alpha,\varepsilon$ as well as elements
	$\beta_2$ and $\beta_3$ in $P$ with 
	\begin{align}
	0&=\beta_1 \alpha_1 + \frac{1}{2}\beta_2 \alpha_{1}^2 + 
	\frac{1}{3}\beta_3 \alpha_{1}^3+\alpha_2 \\
	0&=\beta_1 +\beta_2 \alpha_1 + \beta_3 \alpha_{1}^2 - \varepsilon
	\end{align}
	This is possible because the matrix
	\[ \begin{pmatrix}
		\frac{\alpha_1^2}{2} & \frac{\alpha_{1}^3}{3} \\
		\alpha_1 & \alpha_{1}^2
	\end{pmatrix}\]
	has determinant $\frac{\alpha_{1}^4}{2}-\frac{\alpha_{1}^4}{3}\neq 0$.
	Since $\beta_2$ and $\beta_3$ are definable over 
	$\beta_1,\bar\alpha,\varepsilon$, we get the independence
	\begin{equation*}
	\tag{$\blacklozenge$}	
	\bar\beta \ind_{\bar\alpha,\varepsilon} 
	\ulcorner E\urcorner,
	\end{equation*}
	where $\bar\beta=(\beta_1,\beta_2,\beta_3)$.
\begin{claim}
The type $\stp(\ulcorner E\urcorner/\bar\beta)$ 
	is $P$-internal.
\end{claim}
	\begin{claimproof}
		Let $b_1,b_2$ and $b_3$ be elements in $S$ such that $b_i$ is in 
		the 
		fiber of $\beta_i$ with 
		\[ b_1,b_2,b_3 \ind_{\bar\beta} \ulcorner 
		E\urcorner, \bar\alpha,\varepsilon \]
		We show that every automorphism $\sigma$ in 
		$\Aut(\mathcal{M}_{1}/P)$ fixing $b_1,b_2$ and $b_3$ must permute 
		$E$. 
		Recall that 
		$F_\sigma$ is the derivation on $\mathbb{C}$ induced by the 
		automorphism $\sigma$. 
		Since $F_{\sigma}(\beta_i)=0$, we deduce from equations (1) and 
		(2) 
		that $\varepsilon\cdot 
		F_{\sigma}(\alpha_1)+F_{\sigma}(\alpha_2)=0$. Hence, the 
		automorphism $\sigma$ permutes the set $E$.
	\end{claimproof}

\begin{claim} The intersection
$\acleq(\ulcorner 
	E\urcorner)\cap\acleq(\bar\beta)=\acleq(\emptyset)$.
\end{claim}
	\begin{claimproof}
		Because of the independence $(\blacklozenge)$, we need only show 
		that 
		\[\acleq(\bar\beta)\cap\acleq
		(\bar\alpha,\varepsilon)=\acleq(\emptyset).\]

	Choose tuples $\bar{\beta}',\bar{\alpha}',\varepsilon',
	\bar{\beta}'',\bar{\alpha}'',\varepsilon'',
	\bar{\beta}'''$ such that
	\[ 
	\bar{\beta},\bar{\alpha},\varepsilon \equiv 
	\bar{\beta}',\bar{\alpha},\varepsilon \equiv
	\bar{\beta}',\bar{\alpha}',\varepsilon' \equiv
	\bar{\beta}'',\bar{\alpha}',\varepsilon' \equiv
	\bar{\beta}'',\bar{\alpha}'',\varepsilon'' \equiv
	\bar{\beta}''',\bar{\alpha}'',\varepsilon''
	\]
	with 
	\begin{align*}
	\bar{\beta}' \ind_{\bar{\alpha},\varepsilon} \bar{\beta} \qquad
	\bar{\alpha}',\varepsilon' \ind_{\bar{\beta}'} 
	\bar{\beta},\bar{\alpha},\varepsilon \qquad
	\bar{\beta}'' \ind_{\bar{\alpha}',\varepsilon'} 
	\bar{\beta},\bar{\alpha},\varepsilon,\bar{\beta}' \qquad
	\bar{\alpha}'',\varepsilon'' \ind_{\bar{\beta}''} 
	\bar{\beta},\bar{\alpha},\varepsilon,\bar{\beta}',\bar{\alpha}',\varepsilon'
	\end{align*} and 
	\begin{align*}
		\bar{\beta}''' \ind_{\bar{\alpha}'',\varepsilon''} 
		\bar{\beta},\bar{\alpha},\varepsilon,\bar{\beta}',\bar{\alpha}',\varepsilon',
		\bar{\beta}''.
	\end{align*}
	Since
		 \[ \acleq(\bar{\beta})\cap\acleq(\bar{\alpha},\varepsilon)\subset 
		 \acleq(\bar{\beta})\cap\acleq(\bar{\beta}'''),  \]
		 we need only show the independence $\bar{\beta} \ind \bar{ 
		 \beta''' } 
		 $. Note first that the whole configuration has Morley rank 9:
		 \[
		 \RM(\bar{\beta},\bar{\alpha},\varepsilon,
		 \bar{\beta}',\bar{\alpha}',\varepsilon',
		 \bar{\beta}'',\bar{\alpha}'',\varepsilon'',
		 \bar{\beta}''') = 
		 \RM(\beta_1,\alpha_1,\alpha_2,\varepsilon,
		 \beta_1',\alpha_1',\beta_1'',\alpha_1'',\beta_1''') =9.
		 \]
		 Since 	 \begin{multline*}
		  \RM(\bar{\beta}''',\bar{\beta},\alpha_1,\alpha_{1}',\alpha_{1}'')=
		    \\ \RM( 
		    \bar{\beta}'''/\bar{\beta},\alpha_1,\alpha_{1}',\alpha_{1}'') +
		      \RM(\alpha_{1}'' / \bar{\beta},\alpha_1,\alpha_{1}'  ) + 
		      \RM( \alpha_{1}' / \bar{\beta},\alpha_1 ) + \\ +
		      \RM(\alpha_1 /\bar{\beta}) + \RM(\bar{\beta}) = 
		      \RM( 
		      \bar{\beta}'''/\bar{\beta},\alpha_1,\alpha_{1}',\alpha_{1}'')
		       + 6,
		 \end{multline*}
		it suffices to show that 
		$\alpha_{2},\varepsilon,\bar{\beta}',\alpha_{2}',\varepsilon',
		\bar{\beta}'',\alpha_{2}''$ and $\varepsilon''$ are all algebraic 
		over the tuple 
		$(\bar{\beta}''',\bar{\beta},\alpha_1,\alpha_{1}',\alpha_{1}'')$.
			 Clearly 
		 $\alpha_2,\varepsilon,\alpha_{2}''$ and $\varepsilon''$ are 
		 algebraic over
		 $\bar{\beta}''',\bar{\beta},\alpha_1,\alpha_{1}''$.  Furthermore 
		 we 
		 have the following system of linear equations:
		  \begin{align*}
		 \begin{pmatrix}
		 6\alpha_1 & 3\alpha_{1}^2 & 2\alpha_{1}^3 & 0 & 0 & 0 & 0 & 0 \\
		 1 & \alpha_1 & \alpha_{1}^2 & 0 & 0 & 0 & 0 & 0 \\
		 6\alpha_{1}' & 3\alpha_{1}'^2 & 2\alpha_{1}'^3 & 6 & 0 & 0 & 0 & 
		 0 \\
		 1 & \alpha_{1}' & \alpha_{1}'^2 & 0 & 1 & 0 & 0 & 0 \\
		 0 & 0 & 0 & 6 & 0 & 6\alpha_{1}' & 3\alpha_{1}'^2 & 
		 2\alpha_{1}'^3 \\
		 0 & 0 & 0 & 0 & 1 & 1 & \alpha_{1}' & \alpha_{1}'^2 \\
		 0 & 0 & 0 & 0 & 0 & 6\alpha_{1}'' & 3\alpha_{1}''^2 & 
		 2\alpha_{1}''^3 \\
		 0 & 0 & 0 & 0 & 0 & 1 & \alpha_{1}'' & \alpha_{1}''^2 \\
		 \end{pmatrix}
		 \begin{pmatrix}
		 \beta_{1}' \\ \beta_{2}' \\ \beta_{3}' \\ \alpha_{2}' \\ 
		 \varepsilon' \\
		 \beta_{1}'' \\ \beta_{2}'' \\ \beta_{3}'' 
		 \end{pmatrix}
		 =
		 \begin{pmatrix}
		 -6\alpha_2 \\
		 \varepsilon \\
		 0 \\
		 0 \\
		 0 \\
		 0 \\
		 -6\alpha_{2}'' \\
		 \varepsilon'' \\
		 \end{pmatrix}
		 \end{align*}
Thus, we need only show that the above matrix has non-zero 
		 determinant
		 \begin{align*}
		 &6 
		 \begin{vmatrix}
		 6\alpha_1 & 3\alpha_{1}^2 & 2\alpha_{1}^3  \\
		 1 & \alpha_1 & \alpha_{1}^2 \\
		 1 & \alpha_{1}' & \alpha_{1}'^2
		 \end{vmatrix}
		 \begin{vmatrix}
		 6\alpha_{1}' & 3\alpha_{1}'^2 & 2\alpha_{1}'^3 \\
		 6\alpha_{1}'' & 3\alpha_{1}''^2 & 2\alpha_{1}''^3 \\
		 1 & \alpha_{1}'' & \alpha_{1}''^2
		 \end{vmatrix} 
		 -6
		 \begin{vmatrix}
		 6\alpha_1 & 3\alpha_{1}^2 & 2\alpha_{1}^3 \\
		 1 & \alpha_1 & \alpha_{1}^2 \\
		 6\alpha_{1}' & 3\alpha_{1}'^2 & 2\alpha_{1}'^3 
		 \end{vmatrix}
		 \begin{vmatrix}
		 1 & \alpha_{1}' & \alpha_{1}'^2 \\
		 6\alpha_{1}'' & 3\alpha_{1}''^2 & 2\alpha_{1}''^3 \\
		 1 & \alpha_{1}'' & \alpha_{1}''^2 
		 \end{vmatrix} \\
		 &=72 \alpha_{1}^2 \alpha_{1}'^2 \alpha_{1}''^2 
		 (\alpha_{1}-\alpha_{1}')(\alpha_{1}-\alpha_{1}'')(\alpha_{1}''-\alpha_{1}')\neq
		  0.
		 \end{align*} 
	~\end{claimproof}
		If $\mathcal M_1$ had preservation of internality on 
		intersections, then 
		the type
	\[\stp(\ulcorner E\urcorner / 
	\acleq(\ulcorner E\urcorner)\cap\acleq(\beta_1,\beta_2,\beta_3))\] 
	would be almost $P$-internal, by Claim 1, and so would be 
	$\stp(\ulcorner E\urcorner)$, by the previous claim, which yields a 
	contradiction, exactly as in the proof of Proposition \ref{P: M1PropB}.
~\end{proof}

Recall that an additive cover  preserves internality on 
intersections, resp. on quotients, if and only if every almost 
$P$-internal 
type is good, resp. special, by Propositions \ref{P:propA} and 
\ref{P:propB}. For real types, the property of being special follows 
directly from almost internality.

\begin{remark}\label{R:realpart}
Almost $P$-internal 
	real types are special in every additive cover. 
\end{remark}
\begin{proof}
    We may assume that the sort $S$ is not almost $P$-internal. By a 
    straight-forward forking calculation (cf. \cite[Theorem 2.5]{zC12} or 
    Proposition \ref{P:propB}), it suffices to show that, whenever the 
    real type $\stp(a/B)$ is almost $P$-internal, with $a$ a single 
    element in $S$, then $\alpha=\pi(a)$ is algebraic over $B$. 
    
    Choose a set of parameters $B_1$  
	with $B_1 \ind_{B} a$ and $a$ algebraic over $B_1,P$. We need only 
	show that $\alpha$ is algebraic over $B_1$. Otherwise, choose an 
	element $a_1$ of $S$ in the fiber of $\alpha$ generic over $B_1$. The 
	elements $a$ and $a_1$ are interdefinable over $P$, so $a_1$ is 
	algebraic over $B_1, P$, contradicting that $S$ is not almost 
	$P$-internal.
\end{proof}
 Propositions 
\ref{P: M1PropA} and \ref{P: M1PropB} and the  above remark give a 
negative answer to Question~\ref{Q:2}.

\begin{corollary}\label{C:AB_imag}
	There is a stable theory of finite Morley rank, where every stationary 
	real almost 
	$\p$-internal type is special, yet internality on intersections is not 
	preserved.
\end{corollary}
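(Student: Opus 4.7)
The plan is to exhibit the additive cover $\mathcal{M}_1$ from Example~\ref{E:covers} as the desired witness; the statement then amounts to collecting results already established in the paper. First I would note that, by Remark~\ref{R:genCov}, every additive cover, and in particular $\mathcal{M}_1$, is uncountably categorical, hence a stable theory of finite Morley rank (in fact of Morley rank two on the sort $S$). So the base-line hypotheses of the corollary are satisfied.

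Next, for the positive side (every stationary real almost $\p$-internal type is special), I would simply invoke Remark~\ref{R:realpart}: in any additive cover, and in particular in $\mathcal{M}_1$, the real almost $P$-internal stationary types are special. Since in the additive cover setting almost $\p$-internality coincides with almost $P$-internality (again Remark~\ref{R:genCov}), this gives exactly the required property for real types.

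For the negative side (failure of preservation of internality on intersections), I would invoke Proposition~\ref{P: M1PropA} directly: $\mathcal{M}_1$ does not preserve internality on intersections. Putting the two pieces together gives the corollary.

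There is essentially no obstacle: the whole content sits in Remark~\ref{R:realpart} and Proposition~\ref{P: M1PropA}, both of which have been proved. The only thing to double-check is that specialness for real types really is the "real version" of the property whose failure Question~\ref{Q:2} asks about; but this is immediate from Proposition~\ref{P:propB}, which characterises preservation of internality on quotients via specialness of all stationary almost $\p$-internal types (real and imaginary), so requiring it only on real types is visibly the "real version" in the sense of Question~\ref{Q:2}.
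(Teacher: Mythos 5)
Your proposal is correct and takes essentially the same route as the paper: cite Remark~\ref{R:genCov} for finite Morley rank, Remark~\ref{R:realpart} for the specialness of real almost $P$-internal types, and Proposition~\ref{P: M1PropA} for the failure of preservation of internality on intersections in $\mathcal{M}_1$. The paper's text additionally references Proposition~\ref{P: M1PropB} because it is answering Question~\ref{Q:2} in full (both intersections and quotients), but for the corollary exactly as stated your selection of ingredients is precisely what is needed.
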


We can now conclude this work relating the failure of the CBP and 
elimination of finite imaginaries, always in the context of additive 
covers.  For this, we need the following easy remark, which follows 
immediately from 
\cite[Remark 1.1 (2)]{zC12}.
\begin{remark}\label{R:intersec} Given tuples $a$ and $b$ in an ambient 
model of an $\omega$-stable theory
such that  $\RM(a)-\RM(a/b)=1$ and $b=\Cb(a/b)$, the intersection
	\[\acleq(a)\cap 
	\acleq(b)=\acleq(\emptyset).\]
\end{remark}
\begin{theorem}\label{T:finImagCBP}
	Suppose that the sort $S$ in the additive cover $\mathcal{M}$ is not 
	almost $P$-internal. If $\mathcal{M}$ eliminates finite imaginaries, 
	then it cannot preserve internality on quotients, so in particular 
	the CBP  does not hold.
\end{theorem}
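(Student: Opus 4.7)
The plan is to adapt the proof of Proposition \ref{P: M1PropB} to a general additive cover satisfying elimination of finite imaginaries, using EFI to substitute for the ring operation $\otimes$ on which the $\mathcal{M}_1$ argument relied.

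Assume for contradiction that $\mathcal{M}$ preserves internality on quotients and eliminates finite imaginaries. Take generic independent $a,b,c\in S$ with $\alpha=\pi(a)$, $\beta=\pi(b)$, $\gamma=\pi(c)$, and consider the set
\[E=\{(x,y)\in S^2\mid\exists(\lambda,\mu)\in P^2\,(\lambda\star a=x\ \&\ \mu\star b=y\ \&\ \gamma\lambda+\mu=0)\}.\]
This is the set from the remark following Proposition \ref{P: M1PropB}, available in every additive cover. The fiber-generic argument used there shows that $\stp(\ulcorner E\urcorner/\alpha,\beta)$ is $P$-internal. The element $\delta=\alpha\gamma+\beta$ lies in $P$ and is definable from $\alpha,\beta,\gamma$.

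The key step is to produce a real tuple $\widehat{d}$ playing the role of $(c,d)$ from the $\mathcal{M}_1$ setting (where $d=(a\otimes c)\oplus b$), such that $\stp(\ulcorner E\urcorner/\gamma,\widehat{d})$ is $P$-internal and $\acleq(\alpha,\beta)\cap\acleq(\gamma,\widehat{d})=\acleq(\emptyset)$. I would apply EFI to an appropriate finite set built from $a,b,c$---for instance a finite $\Aut(\mathcal{M}/P,a,b,c)$-orbit of a candidate element of $\pi^{-1}(\delta)$ encoding the ``second-coordinate data'' of $(a\otimes c)\oplus b$. EFI guarantees this orbit has a real canonical parameter, from which $\widehat{d}$ is extracted; Remark \ref{R:intersec}, together with the generic independence of $\alpha,\beta,\gamma$, gives the trivial intersection. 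Preservation of internality on quotients then forces $\stp(\ulcorner E\urcorner)$ to be almost $P$-internal. The contradiction is obtained as in the final paragraph of Proposition \ref{P: M1PropB}: using Remark \ref{R:Gal} and the hypothesis that $S$ is not almost $P$-internal, one produces an infinite family of automorphisms in $\Aut(\mathcal{M}/P)$ fixing any prescribed real witness tuple $C$ and each moving $\ulcorner E\urcorner$, giving it an infinite orbit.

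The main obstacle is the concrete construction of $\widehat{d}$. Because $\Aut(\mathcal{M}/P)$ may lie strictly between the derivations (as in $\mathcal{M}_1$) and the full additive maps (as in $\mathcal{M}_0$), one cannot simply invoke $\otimes$. Instead, EFI must be applied either iteratively or to carefully chosen finite sets whose coded data lift the field identity $\delta=\alpha\gamma+\beta$ from $P$ to suitable fiber elements of $S$. Verifying that the resulting $\widehat{d}$ simultaneously witnesses $P$-internality of $\stp(\ulcorner E\urcorner/\gamma,\widehat{d})$ and the triviality of the intersection with $\acleq(\alpha,\beta)$ is the delicate technical point.
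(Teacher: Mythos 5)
Your proposal correctly identifies the overall shape of the argument (build a replacement for the tuple $(c,d)$ over which $\ulcorner E\urcorner$ is $P$-internal, arrange $\acleq(\alpha,\beta)\cap\acleq(\gamma,\widehat{d})=\acleq(\emptyset)$, and then run the contradiction from Proposition \ref{P: M1PropB}), and you are right that the fiber-genericity argument for $\stp(\ulcorner E\urcorner/\alpha,\beta)$ goes through in any additive cover. But the construction of $\widehat{d}$ is not a technicality you can defer; it is where essentially all of the work lies, and your tentative suggestion does not work. There is no $\otimes$ in a general additive cover, so "the second-coordinate data of $(a\otimes c)\oplus b$" has no meaning, and a candidate element of $\pi^{-1}(\delta)$ typically has an \emph{infinite} $\Aut(\mathcal{M}/P,a,b,c)$-orbit precisely because $S$ is not almost $P$-internal, so there is no finite set to which EFI can be applied in the way you describe.

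The paper's proof follows a genuinely different path to build the missing formula. It applies EFI to the two-element set $\{a_0,a_1\}$ of generic independent fiber-elements, observes that the resulting real canonical parameter has a coordinate $e$ \emph{not} definable over $a_0\oplus a_1, P$, and that $\pi(e)=r(\alpha_0,\alpha_1)$ for a symmetric rational function $r$ over $\mathbb{Q}$. Since $r$ does not reduce to an affine function of $\alpha_0+\alpha_1$, it carries multiplicative information. The paper then invokes Martin's technique for recovering multiplication in reducts of $\mathbb{C}$: via finite differencing (in the non-polynomial case) or a direct computation with numerators/denominators (in the polynomial case), one manufactures a definable formula $\varphi(y,z)$ in $\mathcal{M}$ whose unique solution has projection $P_\xi(\pi(y))/Q_\xi(\pi(y))$ with $\deg Q_\xi>\deg P_\xi$. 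Only then is $d$ defined by $\exists z(\varphi(a\oplus c,z)\wedge z\oplus b=d)$, and the degree inequality on $P_\xi,Q_\xi$ is exactly what makes the trivial-intersection claim $\acleq(\alpha,\beta)\cap\acleq(\gamma,\delta)=\acleq(\emptyset)$ go through (via Remark \ref{R:intersec} and a computation showing $\Cb(\alpha,\beta/\gamma,\delta)$ is all of $(\gamma,\delta)$). None of this rational-function bookkeeping, nor the essential case split on whether $r(\alpha_0,Y)$ is polynomial, is anticipated in your sketch, so as it stands the proposal has a genuine gap at its central step.
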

In a  forthcoming work, we will explore whether the converse holds. We 
believe that similar techniques  show that  an additive cover as above 
cannot even preserve internality on intersections, but we have not yet 
pursued this problem thoroughly.
\begin{proof}
	We assume that the additive cover $\mathcal{M}$ eliminates finite 
	imaginaries and that the sort $S$ is not almost $P$-internal. In order 
	to show the failure of preserving internality on quotients, we will 
	find a similar configuration 
	to $(a\otimes c)\oplus b=d$, resonating with Martin's work \cite{gM88} 
	on 
	recovering 
	multiplication. 
	
	Choose two generic independent elements 
	$a_0=(\alpha_0,0)$ and 
	$a_1=(\alpha_1,0)$ in $S$. The
	real canonical parameter of the finite set 
	$\{ a_0,a_1 \}$ is not definable over $a_0 \oplus a_1, P$: Indeed, 
	since $S$ 
	is not almost $P$-internal, there is an automorphism $\sigma$ in 
	$\Aut(\mathcal{M}/P)$ with $\sigma(a_0)=1\star a_0$ and 
	$\sigma(a_1)=(-1)\star 
	a_1$, so $\sigma(a_0 \oplus a_1)=a_0\oplus a_1$, but $\sigma$ does not 
	permute 
	$\{ a_0,a_1 \}$. Choose now some coordinate $e$ of the real canonical 
	parameter which 
	is not definable over $a_0\oplus a_1,P$. 
	Note that $\varepsilon=\pi(e)$ is definable over $\alpha_0,\alpha_1$, 
	by 
	Remark \ref{R:algFib}.  Therefore $\varepsilon=r(\alpha_0,\alpha_1)$ 
	for 
	some symmetric 
	rational function $r(X,Y)$ over $\mathbb{Q}$. Let $\rho(x,y,z)$ be a 
	formula such that $e$ is the unique element realizing
	$\rho(a_0,a_1,z)$.
	
	We now proceed according to whether $r(\alpha_0,Y)$ is a 
	polynomial 
	map. Assume first that the map $r_{\alpha_0}(Y)=r(\alpha_0,Y)$ is not 
	polynomial.
	 
	As in the proof of \cite[Lemma 3.2]{gM88}, there are natural numbers 
	$n_1,\ldots,n_k$ 
	such that the 
	degree of the numerator $P_{\alpha_0}(Y)$ of the rational function
	\[
	%(-1)^{k}r(\alpha,Y)+
	\sum_{j=0}^{k}(-1)^{k-j}
	\sum_{1\leq i_1<\dots<i_j\leq k} 
	r_{\alpha_0}\big(Y+n_{i_1}+\cdots+n_{i_j}\big) 
	\] 
	is strictly smaller  than the degree of its denominator 
	$Q_{\alpha_0}(Y)$. 
	 
	 For $1\leq i_1<\dots<i_j\leq k$, the formula $\rho(a_0,a_1\oplus 
	 (n_{i_1}+\cdots+n_{i_j},0),z)$
	has a unique realization $e_{i_1,\dots,i_j}$, since 
	\[\alpha_1 \equiv_{\alpha_0} \alpha_1+n_{i_1}+\cdots+n_{i_j},\]
	so by Remark \ref{R:algFib} 
	\[a_1 \equiv_{a_0} a_1\oplus(n_{i_1}+\cdots+n_{i_j},0). \] 
 	
		Set now 
	\[e_j=\sum_{1\leq i_1<\dots<i_j\leq k} e_{i_1,\dots,i_j}\]
	and \begin{multline*} 
	\psi(x, y,z)= 
	\exists \bar{z} \Big(
	\bigwedge_{j=0}^{k} \ \ 
	\bigwedge_{1\leq i_1<\dots<i_j\leq k}
	\rho(x,y\oplus(n_{i_1}+\cdots+n_{i_j},0),z_{i_1,\dots,i_j}) \ \ 
	\land \\
	z=
	\sum_{j=0}^{k}(-1)^{k-j}
	\sum_{1\leq i_1<\dots<i_j\leq k}
	z_{i_1,\dots,i_j}
	\Big).
	\end{multline*}
	Note that the element
	\[ \sum_{j=0}^{k}(-1)^{k-j} e_j\]
	is the unique realization of $\psi(a_0,a_1,z)$ and its projection to 
	$P$ is
	\[ \frac{P_{\alpha_0}(\alpha_1)}{Q_{\alpha_0}(\alpha_1)}.\]
	
	By Remark \ref{R:algFib}, every element in the fiber of $\alpha_0$ has 
	the same type as $a_0$ over $a_1,P$, so the formula
	\[ \forall u \Big( \pi(u)=x \rightarrow  \Big(
	\exists ! z  \psi(u, a_1,z) \land \forall w \Big( \psi(u, a_1,w) 
	\rightarrow \pi(w)= \frac{P_{x}(\alpha_1)}{Q_{x}(\alpha_1)} 
	\Big)\Big)\Big)\]
\noindent belongs to the generic type $\tp(\alpha_0/a_1)$ in $P$. 
Therefore, there exists an algebraic number $\xi$ realizing it such that 
$\deg(Q_\xi(Y))>\deg(P_\xi(Y))$. Write now $\varphi(y,z)=\psi( (\xi, 0), 
y,z)$ and choose generic independent elements $a,b$ and $c$ in $S$ with 
	projections
	\[\pi(a)=\alpha,\pi(b)=\beta \ \ \textnormal{ and } \ \ \pi(c)=\gamma.
	\]
	The formula $\varphi$ will play the role of the multiplication 
	$\otimes$, so let
	$d=(\delta,d')$ be the unique element such that 
	\[ \mathcal{M}\models \exists z \big(\varphi(a\oplus c,z)\land z\oplus 
	b=d\big). \]
	\begin{claim}
		The intersection 
		$\acleq(\alpha,\beta)\cap\acleq(\gamma,\delta)=\acleq(\emptyset)$.
	\end{claim}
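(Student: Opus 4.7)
The plan is to apply Remark~\ref{R:intersec} directly, with $a = (\alpha,\beta)$ and $b = (\gamma,\delta)$. The claim will follow once we verify the two hypotheses: $\RM(\alpha,\beta) - \RM(\alpha,\beta/\gamma,\delta) = 1$, and that $(\gamma,\delta)$ is interalgebraic with the canonical base $\Cb(\alpha,\beta/\gamma,\delta)$.

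For the rank computation, since $a,b,c$ are generic independent in $S$, their projections $\alpha,\beta,\gamma$ are algebraically independent generics of the strongly minimal sort $P$, whence $\RM(\alpha,\beta)=2$. On the other hand, by construction $\delta = \beta + r(\xi,\alpha+\gamma)$, so $(\alpha,\beta)$ lies on the plane curve
\[
C_{\gamma,\delta} = \{(x,y)\in P^2 : \delta = y + r(\xi,\, x+\gamma)\}.
\]
Clearing denominators, this curve is defined by a polynomial linear in $y$ whose coefficients (in $x$) are coprime since $P_\xi$ and $Q_\xi$ may be taken coprime, hence it is irreducible; so $\RM(\alpha,\beta/\gamma,\delta)=1$ and the type is stationary as the generic of $C_{\gamma,\delta}$.

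For the canonical base, $\Cb(\alpha,\beta/\gamma,\delta)$ is interalgebraic with the canonical parameter of $C_{\gamma,\delta}$, which manifestly lies in $\acleq(\gamma,\delta)$. The delicate point, and the main obstacle, is the reverse inclusion: if $(\gamma',\delta')$ gives rise to the same curve, one must conclude $(\gamma',\delta') = (\gamma,\delta)$. Equality of curves yields the identity
\[
r(\xi,\, \alpha+\gamma) - r(\xi,\, \alpha+\gamma') = \delta - \delta'
\]
as functions of $\alpha$. Since the right-hand side is constant, if $\gamma \neq \gamma'$ then $r'(\xi,\cdot)$ would be a rational function periodic with nonzero period $\gamma-\gamma'$, hence constant, forcing $r(\xi,\cdot)$ to be affine-linear. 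This contradicts $\deg P_\xi < \deg Q_\xi$, a property of $\xi$ carefully arranged through the alternating-sum construction earlier in the proof. Hence $\gamma=\gamma'$ and consequently $\delta=\delta'$, completing the verification and thereby the claim.
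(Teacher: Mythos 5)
Your proposal is correct in spirit and reaches the claim by a genuinely different route from the paper's. Both arguments invoke Remark~\ref{R:intersec} and reduce the claim to showing that $(\gamma,\delta)$ is recovered from the canonical base $\Cb(\alpha,\beta/\gamma,\delta)$, but they diverge in how they show this. The paper takes the standard forking-theoretic route: choose $\alpha',\beta'$ conjugate to $\alpha,\beta$ over $\gamma,\delta$ and independent from them, subtract the two defining equations to obtain a single polynomial identity in $\gamma$, and then observe that because the $Q_\xi Q_\xi$ term has degree $2\deg Q_\xi$, strictly larger than the $P_\xi Q_\xi$ cross terms by the crucial inequality $\deg P_\xi < \deg Q_\xi$, the equation is nondegenerate in $\gamma$ as soon as $\beta\neq\beta'$; a separate short argument rules out $\beta=\beta'$. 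Your route instead identifies $\Cb(\alpha,\beta/\gamma,\delta)$ with the canonical parameter of the plane curve $C_{\gamma,\delta}$ and proves the parametrization $(\gamma,\delta)\mapsto C_{\gamma,\delta}$ is injective by a periodicity argument on the derivative; this is where $\deg P_\xi<\deg Q_\xi$ enters, via the fact that a proper rational function cannot be affine-linear. Your argument trades the paper's elementary algebraic manipulation for the (equally elementary, but different) fact that a nonconstant rational function over $\mathbb{C}$ cannot be periodic. Both proofs implicitly need $P_\xi\neq 0$ and $\deg Q_\xi>0$ for the configuration to be nondegenerate, which Martin's construction supplies.

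One point to fix: you write $\delta=\beta+r(\xi,\alpha+\gamma)$, but $\delta$ is actually $\beta+P_\xi(\alpha+\gamma)/Q_\xi(\alpha+\gamma)$, where $P_\xi/Q_\xi$ is the alternating sum of shifted translates of $r_\xi$ produced by the formula $\psi$, not $r_\xi$ itself. Your concluding contradiction (``$r(\xi,\cdot)$ affine-linear contradicts $\deg P_\xi<\deg Q_\xi$'') only makes sense when read with the corrected function $P_\xi/Q_\xi$ in place of $r(\xi,\cdot)$ throughout; as written there is a mismatch between the object said to be affine-linear and the object to which $\deg P_\xi<\deg Q_\xi$ applies. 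Since you clearly have the alternating-sum construction in mind, this is notational rather than a logical gap, but it should be cleaned up.
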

	\begin{claimproof}
			Since 
			$\RM(\alpha,\beta)-\RM(\alpha,\beta/\gamma,\delta)=2-1=1$, it 
		suffices to show by Remark \ref{R:intersec} that 
		$\Cb(\alpha,\beta/\gamma,\delta)$ is interdefinable with 
		$(\gamma,\delta)$. 
	
		Choose elements $\alpha'$ and $\beta'$ such that 
		\[\alpha',\beta' \equiv_{\gamma,\delta} \alpha,\beta \ \ 
		\textnormal{ 
			and } \ \ \alpha',\beta' \ind_{\gamma,\delta} \alpha,\beta \ 
			,\]
		so
		\[
		\frac{P_\xi(\alpha+\gamma)}{Q_\xi(\alpha+\gamma)}+\beta=\delta=
		\frac{P_\xi(\alpha'+\gamma)}{Q_\xi(\alpha'+\gamma)}+\beta'. 
		\]
		Therefore
		\[
		P_\xi(\alpha+\gamma)Q_\xi(\alpha'+\gamma)-P_\xi(\alpha'+\gamma)Q_\xi(\alpha+\gamma)+
		(\beta-\beta')Q_\xi(\alpha+\gamma)Q_\xi(\alpha'+\gamma)=0.
		\]
		Since 
		\[\deg(Q_\xi(Y))>\deg(P_\xi(Y)),\]
		we need only show $\beta\neq\beta'$, for then 
		$\gamma$ is algebraic over $\alpha,\beta,\alpha',\beta'$ and 
		hence so is $\delta$, as desired. 
		
		We assume for a contradiction that 
		$\beta=\beta'$. Hence $\beta$ is algebraic over $\gamma,\delta$, 
		so  the equation
		\[ P_\xi(\alpha+\gamma)=(\delta-\beta)Q_\xi(\alpha+\gamma)\]
		yields that $\alpha$ is also algebraic over $\gamma,\delta$, which 
		is a blatant 
		contradiction.
	\end{claimproof}
	As in Proposition \ref{P: M1PropB}, with the definable set
	\begin{align*} 
		E &= \big\{ (x,y)\in S^2 \ |\ \pi(x)=\alpha \ \& \   \pi(y)=\beta 
		\ \& \ 
		\exists z \big(\varphi(x\oplus c,z)\land z\oplus y=d\big) \big\} 
	\end{align*}
	we can easily prove that 
	the types 
	\[ 
	\stp(\ulcorner E\urcorner/\gamma,\delta) \ \ \textnormal{ and } \ \ 
	\stp(\ulcorner E\urcorner/\alpha,\beta)
	\] 
	are $P$-internal, since $(\xi,0)$ is internal over $\acl(\emptyset)$.

	We assume now that $\mathcal M$ preserves internality on 
	quotients in order to reach a  contradiction. By 
	 the above claim, 
	the type $\stp(\ulcorner E\urcorner)$
	is almost $P$-internal. 
	Therefore, there is a set $C$ of parameters with
	$C \ind \ulcorner E\urcorner,a,b$ such that the canonical parameter 
	$\ulcorner E\urcorner$ is algebraic over $C,P$. 
	Note that in 
	particular 
	\[C,a \ind b.\]
	Since the sort $S$ is not almost $P$-internal, there is an 
	automorphism 
	$\sigma$ in $\Aut(\mathcal{M}/P)$ fixing $C$ and $a$, yet 
	$\sigma(b)\neq b$. 
	The orbit of $\ulcorner E\urcorner$ under $\sigma$ is hence infinite, 
	which gives the desired contradiction.
	
	The remaining case is that the rational function 
	$r(\alpha_0,Y)$ is 
	polynomial. For a natural number $m$,  write $r(X,mX+Y)$ as 
	\[  
	r(X,mX+Y)=\sum_{i=0}^{n} \frac{P_{m,i}(X)}{Q_{m,i}(X)}Y^i,
	\]
	with coprime polynomials $P_{m,i}(X)$ and $Q_{m,i}(X)$ over 
	$\mathbb{Q}$ with
	$P_{m,n}\neq 
	0$ (for $r$ is not the zero map).
	
	\begin{claim}
		There exists a natural number $m$ such that 
		$\deg(P_{m,i})\neq \deg(Q_{m,i})$
		for some $i>0$.
	\end{claim}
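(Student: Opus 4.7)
My plan is to argue by contradiction: assume that for every $m \in \mathbb{N}$ and every $i > 0$ one has $\deg P_{m,i} = \deg Q_{m,i}$, derive that $r$ must then be an affine function of $X+Y$ over $\mathbb{Q}$, and conclude by contradicting the non-definability of $e$ over $a_0 \oplus a_1, P$.

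A preliminary reduction uses the symmetry of $r$, together with $r \in \mathbb{Q}(X)[Y]$, to show $r \in \mathbb{Q}[X,Y]$. Writing $r = N(X,Y)/D(X)$ in lowest terms in the UFD $\mathbb{Q}[X,Y]$, the relation $N(X,Y)\,D(Y) = N(Y,X)\,D(X)$ coming from $r(X,Y) = r(Y,X)$ forces any irreducible factor $p(X)$ of $D(X)$ to divide $N(X,Y)\,D(Y)$, which is impossible unless $D$ is a constant. I may then write $r(X, Y) = \sum_{i=0}^n R_i(X)\, Y^i$ with $R_i \in \mathbb{Q}[X]$ and $R_n \neq 0$.

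Next, each coefficient
\[ S_{m,i}(X) = \sum_{j=i}^n \binom{j}{i}\, m^{j-i}\, X^{j-i}\, R_j(X) \]
of $Y^i$ in $r(X, mX + Y)$ is a polynomial in $X$, so every $Q_{m,i}$ is constant and the contradictory hypothesis amounts to $S_{m,i}$ being a nonzero element of $\mathbb{Q}$ for all $m \in \mathbb{N}$ and $i > 0$. Viewing $S_{m,i}$ as a polynomial in $m$ over $\mathbb{Q}[X]$, each coefficient $\binom{j}{i}\, X^{j-i}\, R_j(X)$ must itself lie in $\mathbb{Q}$. Taking $i = 1$ forces $R_1 \in \mathbb{Q}$ and $R_j = 0$ for $j \geq 2$; symmetry of $r$ then yields $R_0(X) = R_1 X + c$ for some $c \in \mathbb{Q}$, so $r(X,Y) = \lambda(X+Y) + \mu$ with $\lambda = R_1$ and $\mu = c$ in $\mathbb{Q}$.

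To conclude, I would show that this form of $r$ forces $e \in \dcl(a_0 \oplus a_1, P)$, contradicting the choice of $e$. Writing $\lambda = p/q$ in lowest terms, the equation $q \cdot x = p \cdot (a_0 \oplus a_1)$ admits, in characteristic zero, the unique solution $(\lambda(\alpha_0+\alpha_1), 0)$ in $S$, which is thus definable over $a_0 \oplus a_1$; adding $(\mu, 0)$, which lies in $\dcl(P)$ by virtue of elimination of finite imaginaries applied to a finite subset of $S$ whose $\pi$-image contains $\mu$, yields $(\varepsilon, 0) \in \dcl(a_0 \oplus a_1, P)$. Since $e = \eta \star (\varepsilon, 0)$ for $\eta$ the second coordinate of $e$ lying in $P$, we conclude $e \in \dcl(a_0 \oplus a_1, P)$, contradicting the choice of $e$. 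The most delicate step I expect is this final one: ensuring that $(\mu, 0)$ lies in $\dcl(P)$ in a general additive cover requires a careful use of the EI hypothesis to pin down the behavior of automorphisms of $\mathcal{M}$ over $P$ on $\mathbb{Q} \subset P$.
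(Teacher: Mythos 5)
Your proposal takes a genuinely different route from the paper's, and it is essentially correct, though one step at the end is asserted with an imprecise justification.

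The paper's proof is a direct case analysis using only $m=0$ and $m=1$: it assumes $m=0$ fails, then in the case $n>1$ computes $\nicefrac{P_{1,n-1}}{Q_{1,n-1}}$ from the $m=0$ data to show that $m=1$ works, while in the case $n=1$ it uses the polynomiality of $r(\alpha_0,Y)$ together with the automorphism-invariance $\alpha_0\equiv\alpha_0+1$ to force $Q_{0,1}$ constant and $P_{0,1}$ nonconstant, so that $m=0$ in fact works after all. You instead argue globally: you use the polynomiality of $r(\alpha_0,Y)$ and the symmetry of $r$ to conclude $r\in\mathbb{Q}[X,Y]$, and then run a Vandermonde argument over the parameter $m$ to force every non-constant coefficient $R_j$ (for $j\geq 2$) to vanish, leaving $r(X,Y)=\lambda(X+Y)+\mu$. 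This is cleaner and avoids the paper's case split, at the cost of invoking infinitely many (rather than two) values of $m$. Both proofs terminate at the same observation: that an affine $r$ would make $e$ definable over $a_0\oplus a_1, P$, contradicting its choice. The reduction to $r\in\mathbb{Q}[X,Y]$ is a nice clarifying step not made explicit in the paper.

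Your treatment of the final contradiction is, however, not fully justified as written. You claim $(\mu,0)\in\dcl(P)$ ``by virtue of elimination of finite imaginaries applied to a finite subset of $S$ whose $\pi$-image contains $\mu$,'' but there is no obvious way to extract $(\mu,0)\in\dcl(P)$ from EI on a single finite set. The correct route is automorphism-theoretic: since $e$ lies in the canonical parameter, every $F$ in the image of $\Aut(\mathcal{M}/P)$ with $F(\alpha_0)=F(\alpha_1)=0$ satisfies $F(\varepsilon)=\mu F(1)=0$. The set $\{F(1): F(\alpha_0)=F(\alpha_1)=0\}$ is an additive subgroup of $\mathbb{C}$, hence either trivial or infinite. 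If infinite, one such $F$ has $F(1)\neq 0$, forcing $\mu=0$; if trivial, then $(1,0)\in\dcl(a_0,a_1,P)$, and applying this to a second independent generic pair and intersecting algebraic closures yields $(1,0)\in\acl(P)=\dcl(P)\cap S$ by Remark \ref{R:algFib}, so $F(1)=0$ for \emph{all} $F$. Either way $\mu F_\sigma(1)=0$ for every $\sigma\in\Aut(\mathcal{M}/P)$ fixing $a_0\oplus a_1$, and the contradiction follows. Since the paper itself treats this final implication as immediate, you are not diverging from the paper's level of rigor, but the specific mechanism you sketch for $(\mu,0)\in\dcl(P)$ does not work as stated and would need to be replaced by something along the above lines.
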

	\begin{claimproof}
		Note that $n>0$ because $r(X,Y)$ is symmetric and non-constant.
		We may assume that $\deg(P_{0,i})= \deg(Q_{0,i})$ for all 
		$i>0$, since otherwise we are done.
		If $n>1$, then
		\begin{align*}
		\frac{P_{1,n-1}(X)}{Q_{1,n-1}(X)}&=
		\frac{P_{0,n}(X)}{Q_{0,n}(X)} X + 
		\frac{P_{0,n-1}(X)}{Q_{0,n-1}(X)}\\
		&=\frac{P_{0,n}(X)Q_{0,n-1}(X)X + P_{0,n-1}(X)Q_{0,n}(X)  
		}{Q_{0,n}(X) Q_{0,n-1}(X)}
		\end{align*}
		implies
		\[   
		\deg(P_{1,n-1})= \deg(Q_{1,n-1}) + 1,
		\]
		so the claim follows. Thus, we are left with the case 
		$n=1$, where
		\begin{align*}
		r(X,Y)=\frac{P_{0,1}(X)}{Q_{0,1}(X)} Y + 
		\frac{P_{0,0}(X)}{Q_{0,0}(X)} 
		= \frac{P_{0,1}(X)Q_{0,0}(X)Y + P_{0,0}(X)Q_{0,1}(X)  
		}{Q_{0,1}(X) Q_{0,0}(X)}.
		\end{align*}
		The map 
		\begin{align*}
		r(\alpha_0,Y)=r(Y,\alpha_0)=\frac{P_{0,1}(Y)Q_{0,0}(Y)\alpha_0 + 
		P_{0,0}(Y)Q_{0,1}(Y)  
		}{Q_{0,1}(Y) Q_{0,0}(Y)}
		\end{align*}
		is polynomial	and since $\alpha_0 \equiv \alpha_0 + 1$, so is 
		the map
		\begin{align*}
		\frac{P_{0,1}(Y)Q_{0,0}(Y)(\alpha_0+1) + 
			P_{0,0}(Y)Q_{0,1}(Y)  
		}{Q_{0,1}(Y) Q_{0,0}(Y)}.
		\end{align*}
		Since $P_{0,1}$ and $Q_{0,1}$ as well as $P_{0,0}$ and $Q_{0,0}$
		are coprime,
		it follows that $Q_{0,0}=\lambda Q_{0,1}$ for some 
		rational number $\lambda\neq 0$. We deduce that both
		\[
		\frac{\lambda \alpha_0 P_{0,1}(X) + P_{0,0}(X)}{\lambda 
		Q_{0,1}(X)}	
		\] 
		and
		\[
		\frac{\lambda (\alpha_0 +1) P_{0,1}(X) + P_{0,0}(X)}{\lambda 
		Q_{0,1}(X)}	
		\] 
		are polynomials. Hence, every root $\zeta$ of $Q_{0,1}$ 
		is a root of 
		\[
		\lambda \alpha_0 P_{0,1} + P_{0,0}
		\ \ \textnormal { and of } \ \
		\lambda (\alpha_0 +1) P_{0,1} + P_{0,0}
		\]
		and therefore $P_{0,1}(\zeta)=0$. 
		This implies that $Q_{0,1}$ is constant, since
		$P_{0,1}$ and $Q_{0,1}$ 
		are coprime.
		It follows that $P_{0,1}$ cannot be constant, since otherwise
		the symmetric function 	$r(X,Y)$ would equal to  
		$q_1\cdot(X+Y)+q_0$ for 
		some rational numbers $q_1$ and $q_0$, which yields that
		the element $e$ would be definable over $a_0\oplus a_1, P$, a 
		contradiction.
	\end{claimproof}	
	Fix now a natural number $m$ as in the previous claim and choose as 
	before generic independent elements $a$, $b$ and $c$  in $S$ with 
	projections
	\[\pi(a)=\alpha,\pi(b)=\beta \ \ \textnormal{ and } \ \ \pi(c)=\gamma.
	\]
	Let
	$d=(\delta,d')$ be the unique element such that 
	\[ \mathcal{M}\models \exists z \big(\rho(a,(m\cdot a) \oplus 
	c,z)\land z\oplus 
	b=d\big). \]
	
	Considering the set
	\begin{align*} 
		\big\{ (x,y)\in S^2 \ |\ \pi(x)=\alpha \ \& \   \pi(y)=\beta \ \& 
		\ 
		\exists z \big(\rho(a,(m\cdot a) \oplus c,z)\land z\oplus y=d\big) 
		\big\}, 
	\end{align*} we need only show as before that 
	\[\acleq(\alpha,\beta)\cap\acleq(\gamma,\delta)=\acleq(\emptyset).\]
		The strategy is the same as in the proof of Claim 1. 	Choose 
		 elements $\alpha'$ and $\beta'$ such that 
		\[\alpha',\beta' \equiv_{\gamma,\delta} \alpha,\beta \ \ 
		\textnormal{ 
			and } \ \ \alpha',\beta' \ind_{\gamma,\delta} \alpha,\beta \ 
			.\]
		Note that
		\[  
		r(\alpha,m\cdot\alpha+\gamma)+\beta=\delta=r(\alpha',m\cdot\alpha'+\gamma)+\beta'
		 ,
		\]
	so 
		\[
		r(\alpha,m\cdot\alpha+\gamma)-r(\alpha',m\cdot\alpha'+\gamma)+\beta-\beta'=0.
		\]
		Now Claim 2 implies that $\gamma$ is algebraic over 
		$\alpha,\beta,\alpha',\beta'$, since $\alpha \ind \alpha'$ (for 
		otherwise both $\alpha$ and $\beta$ are algebraic over 
		$\gamma,\delta$). It 
		follows that  
		$\delta$ is also algebraic over 
		$\alpha,\beta,\alpha',\beta'$, as desired. 
	~\end{proof}


\begin{thebibliography}{99}
	
	
				
	\bibitem{AZ91}  G. Ahlbrandt and M. Ziegler,
					\emph{What's so special about $({\mathbb Z}/4{\mathbb 
					Z})^\omega$?},
					Arch. Math. Logic {\bf 31}, 
					(1991), 115--132.
					
	\bibitem{BMPW12}T. Blossier, A. Martin-Pizarro and F. O. Wagner,
					\emph{On variants of CM-triviality},
					Fund. Math. {\bf 219}, 
					(2012), 253--262.
					
	\bibitem{fC80}  F. Campana,
					\emph{Algébricité et compacité dans l’espace des 
					cycles d’un espace analytique complexe},
					Math. Ann. {\bf 251}, 
					(1980), 7--18.				
	
	\bibitem{zC12}  Z. Chatzidakis,
					\emph{A note on canonical bases and one-based types in 
					supersimple theories}, 
					Confluentes Math. {\bf 4}, 
					(2012), 1250004-1--1250004-34.				
					
	\bibitem{HP94}  W. Hodge and A. Pillay,
					\emph{Cohomology of structures and some problems of 
					Ahlbrandt and Ziegler},
					J. London Math. Soc. {\bf 50}, 
					(1994), 1--16.
	
	\bibitem{eH91}  E. Hrushovski,
					\emph{Unidimensional theories. An introduction to 
					geometric stability theory}, in 
					Logic Colloquium '87 Granada, North-Holland, 
					Amsterdam (1989), 73--103.
	
	\bibitem{HPP13} E. Hrushovski, D. Palac\'in and A. Pillay,
					\emph{On the canonical base property},
					Selecta Math. {\bf 19}, 
					(2013), 865--877.
					
	\bibitem{JJP20}	R. Jaoui, L. Jimenez and A. Pillay,
					\emph{Relative internality and definable fibrations},
					preprint, (2020), 
					\url{https://arxiv.org/abs/2009.06014}.	
	
	\bibitem{KP06} P. Kowalski and A. Pillay,
					\emph{Quantifier elimination for algebraic D-groups},
					Trans. Amer. Math. Soc. {\bf 358}, 
					(2006), 167--181.
					
	\bibitem{MP90}	D. Marker and A. Pillay,
					\emph{Reducts of $({\mathbb C},+,\cdot)$ which contain 
						$+$},
					J. Symb. Logic {\bf 55} , (1990), 1243--1251.
	
	\bibitem{gM88}	G. Martin,
					\emph{Definability in reducts of algebraically closed 
					fields},
					J. Symb. Logic {\bf 53}, 
					(1988), 188--199.
					
	\bibitem{MP08}	R. Moosa and A. Pillay,
					\emph{On canonical bases and internality criteria},
					Illinois J. Math. {\bf 52}, 
					(2008), 901--917.				
					
	
	\bibitem{PP17}  D. Palac\'in and A. Pillay,
					\emph{On definable Galois groups and the strong 
					canonical base property},
					J. Math. Logic {\bf 17}, 
					(2017), 1750002-1--1750002-10.
					
	\bibitem{PW13}	D. Palac\'in and F. O. Wagner,
					\emph{Ample thoughts},
					J. Symb. Logic {\bf 78}, 
					(2013), 489--510.
	
	\bibitem{aP95}	A. Pillay,
					\emph{The Geometry of Forking and Groups of Finite 
					Morley Rank},
					J. Symb. Logic {\bf 60}, 
					(1995), 1251--1259.
	
	\bibitem{aP96}	A. Pillay,
					\emph{Geometric stability theory},
					Oxford Logic Guides {\bf 32}, 
					The Clarendon Press, Oxford University Press, New 
					York, (1996).
					
	\bibitem{PZ03}	A. Pillay and M.Ziegler,
					\emph{Jet spaces of varieties over differential and 
					difference fields},
					Selecta Math. {\bf 9}, 
					(2003), 579--599.

	\bibitem{TZ12}	K. Tent and M. Ziegler,
					\emph{A course in model theory},
					Lecture Notes in Logic {\bf 40}, 
					ASL, La Jolla, CA; Cambridge
					University Press, Cambridge, (2012).
	

	

					
	
								
			
	
	

	
\end{thebibliography}
\end{document}